\documentclass[12pt, reqno]{amsart}
\usepackage{mathrsfs}
\usepackage{amsfonts}
\usepackage{}
\usepackage{amsmath}
\usepackage{amsthm}
\usepackage{amssymb}
\usepackage[colorlinks=true, allcolors=blue]{hyperref}
\usepackage[compress,numbers]{natbib}
\usepackage{graphicx}
\usepackage{comment}
\usepackage{diagbox}
\usepackage{booktabs}
\usepackage{makecell}
\usepackage{tikz}
\newtheorem{theorem}{Theorem}[section]
\newtheorem{lemma}[theorem]{Lemma}
\usepackage{hyperref}
\usepackage{float}
\usepackage{multirow}

\usetikzlibrary{shapes, positioning}
\theoremstyle{definition}
\newtheorem{definition}[theorem]{Definition}

\newtheorem{proposition}[theorem]{Proposition}

\theoremstyle{remark}
\newtheorem{remark}[theorem]{Remark}

\numberwithin{equation}{section}

\newcommand{\abs}[1]{\lvert#1\rvert}

\newcommand{\R}{\mathbb{R}}
\newcommand{\Sph}{S^{n-1}}
\newcommand{\vol}{V_n}
\newcommand{\supp}{\operatorname{supp}}

\newcommand{\Span}{\operatorname{span}}
\newcommand{\Gr}{\operatorname{Gr}}
\newcommand{\KG}{\mathcal K_{G}}
\newcommand{\ip}[2]{\left\langle #1,#2\right\rangle}
\newcommand{\conv}{\operatorname{conv}}

\newcommand{\Rn}{\mathbb{R}^{n}}
\newcommand{\Sn}{S^{n-1}}
\newcommand{\On}{\mathrm{O}(n)}

\newcommand{\SL}{S^{n-1}\cap L}
\newcommand{\Lp}{L_{p}}

\newcommand{\la}{\langle}
\newcommand{\ra}{\rangle}
\newcommand{\lp}{L^\perp}

\newcommand{\Fix}{\mathrm{Fix}}
\DeclareMathOperator{\diam}{diam}

\theoremstyle{definition}

\begin{document}
	
	\title{The Dual Minkowski Problem under Group Actions}
	\author{Junjie Shan}
	\address{School of Mathematics, Sichuan University, Chengdu, Sichuan, 610064, P. R. China}

\email{shanjjmath@163.com 
}

\thanks{{\it 2010 Math Subject Classification}:  52A40, 52A38}

\thanks{{\it Keywords}:    dual Minkowski problem,  dual curvature measure, group symmetry, $G$-invariant convex bodies, $G$-invariant subspace, subspace concentration condition}

\begin{abstract}
In this paper, we study the dual Minkowski problem under group symmetry. For $0<q\le n$, we give a complete existence characterization in the framework of $G$-invariant convex bodies when the group $G\subset O(n)$ has no nonzero fixed points, recovering the origin-symmetric setting when $G=\{\pm I\}$. The necessary and sufficient conditions concern the concentration of the measure on $G$-invariant subspaces, both in the range $0<q<n$ and at the critical endpoint $q=n$, where the problem becomes the logarithmic Minkowski problem.
\end{abstract}
	
	\maketitle

\section{Introduction}
A central problem in convex geometry is the study of global geometric invariants and locally defined geometric measures of convex bodies. The classical Minkowski problem concerns the characterization of the surface area measure, which may be viewed as the differential of the volume functional. It asks for necessary and sufficient conditions under which a given Borel measure on the unit sphere is the surface area measure of a convex body. The solution of the classical Minkowski problem has found numerous applications in various areas of analysis and geometry.

During the last three decades, many new geometric measures have been introduced and extensively studied. 
	Lutwak \cite{Lutwak1993,Lutwak1996} introduced the $L_p$ Brunn-Minkowski theory and posed the $L_p$ Minkowski problem of characterizing $L_p$ surface area measures. The $L_p$ Minkowski problem has been solved for $p \ge 1$; see \cite{lyz2004,Lutwak1993,L-O 1995}. For $p<1$, however, the $L_p$ Minkowski problem remains largely open; for some progress, see \cite{B-B-A-Y 2019,chou-wang,GLW,JJZ2016,LW2013,zhu2015,zhu2017}. A particularly important unsolved case is the celebrated logarithmic Minkowski problem corresponding to $p=0$. Unlike the classical Minkowski problem, the logarithmic Minkowski problem exhibits a measure concentration phenomenon.  B\"or\"oczky, Lutwak, Yang, and Zhang~\cite{KLYZ2013JAMS} gave a complete solution in all dimensions in the even case. They showed that the subspace concentration condition is the necessary and sufficient condition for the existence of a solution.
	
	A nonzero finite Borel measure $\mu$ on $S^{n-1}$ is said to satisfy the \textit{subspace concentration condition} if
	\begin{equation}\label{subspace concentration condition}
		\frac{\mu(S^{n-1}\cap \xi)}{\mu(S^{n-1})}\le \frac{\dim \xi}{n}
	\end{equation}
	for every  subspace $\xi$ of $\mathbb R^n$ with $0<\dim \xi<n$, and whenever equality holds for some $\xi$, there exists a subspace $\xi'$ complementary to $\xi$ such that $\mu$ is concentrated on $S^{n-1}\cap (\xi\cup \xi')$.

The logarithmic Minkowski problem in the non-even case remains largely open. Partial results concerning polytopes and approximation by polytopes were obtained in
\cite{BHZ2016,CLZ2019,HLL2006,HL2014,Stancu 2002,Xiong 2010,zhu2014}. B\"or\"oczky and Henk \cite{BH2016} showed that the cone-volume measure of a convex body with centroid at the origin still satisfies the subspace concentration condition.

In the groundbreaking work \cite{HLYZdual}, Huang, Lutwak, Yang, and Zhang introduced a new family of geometric measures, called dual curvature measures. For a convex body $K$ and $q\in\mathbb R$, the $q$-th dual curvature measure is denoted by $\widetilde C_q(K,\cdot)$ and can be viewed as a differential of the dual quermassintegral. These measures connect two well-known fundamental geometric measures. The case $q=0$ corresponds, after a suitable normalization, to Aleksandrov's integral curvature measure, while the case $q=n$ gives the cone-volume measure  mentioned above.
The dual Minkowski problem, which asks for the characterization of dual curvature measures, remains widely open:

\textit{Given a nonzero finite Borel measure $\mu$ on the unit sphere $S^{n-1}$ and a real number $q$, what are necessary and sufficient conditions for the existence of a convex body $K$ satisfying
	$
	\mu=\widetilde C_q(K,\cdot)?
	$}
	
	Thus the Aleksandrov problem and the logarithmic Minkowski problem arise as two critical cases of the dual Minkowski problem. For $0<q<n$, the dual Minkowski problem for even data has been completely solved. The existence of solutions was first proved by Huang, Lutwak, Yang, and Zhang \cite{HLYZdual} for $0<q\le 1$, and for $1<q<n$ under a stronger sufficient condition. This condition was later refined by Zhao \cite{zhao2018} for integer $q$, and independently by B\"or\"oczky, Henk, and Pollehn \cite{BHP2018} for all $1<q<n$. Zhao~\cite{zhao2018} also proved sufficiency for integer $q$, and the sufficiency for all real $1<q<n$ was later established by B\"or\"oczky, Lutwak, Yang, Zhang, and Zhao~\cite{BLYZZ2019}. They showed that the $q$-th subspace mass inequality is the necessary and sufficient condition for the existence of a solution.
	
	A nonzero finite Borel measure $\mu$ on $S^{n-1}$ is said to satisfy the \textit{$q$-th subspace mass inequality} if
\begin{equation}\label{qthclassical}
\frac{\mu( S^{n-1}\cap \xi)}{\mu(S^{n-1})}
<
\min\left\{\frac{\dim \xi}{q},1\right\}
\end{equation}
for every subspace $\xi$ of $\mathbb R^n$ with $0<\dim \xi<n$.

When $0<q\le 1$, this condition simply means that $\mu$ is not concentrated on any great subsphere. For further progress on the dual Minkowski problem, see \cite{BLYZ2015,chenli 2018,CLWX2025,EH2023,guoxizhao,HLYZ2018,HLXZ,LYZX chord,lsw,Lyzdualcone,zhao2017}. However, little is known about the dual Minkowski problem in the non-even case.
 
 In \cite{lyz2018}, the $(p,q)$-th dual curvature measure was introduced, unifying the $L_p$ surface area measure and the dual curvature measure mentioned above. For further developments, see \cite{BF2019,chenli flow,GLW2023 flow,huangzhao dual,LLL2022,mui2024}. If the prescribed measure $\mu$ has a density $f$ on $S^{n-1}$, then no subspace concentration
 phenomenon occurs. In this absolutely continuous setting, the $L_p$ dual Minkowski problem reduces to the following Monge-Amp\`ere type equation:
 \begin{equation}\label{continuous}
 	\det(\nabla_{ij} h+h\delta_{ij})
 	=
 	h^{p-1}\big(|\nabla h|^2+h^2\big)^{\frac{n-q}{2}}f,
 \end{equation}
 where $h$ is the unknown support function, $\delta_{ij}$ is the Kronecker delta, and $\nabla h$ and $\nabla_{ij}h$ denote the gradient and Hessian of $h$ on $S^{n-1}$, respectively.

We first recall some notation. Let $\mathcal K^n$ denote the set of convex bodies in $\mathbb R^n$, let $\mathcal K_o^n$ denote the set of convex bodies containing the origin in their interior, and let $\mathcal K_e^n$ denote the set of origin-symmetric convex bodies.

The aim of this paper is to study the existence part of the dual Minkowski problem in the framework of $G$-invariant convex bodies.
Let $G \subset O(n)$ be a subgroup. A convex body $K \subset \mathbb R^n$ is called $G$-invariant if $gK=K$ for all $g \in G$. We denote by $\mathcal{K}_{G}$ the collection of all \textit{$G$-invariant convex bodies}, i.e.,
\begin{equation*}
	\mathcal{K}_{G} := \left\{ K \in \mathcal{K}^n : gK = K \text{ for all } g \in G \right\}.
\end{equation*}
A Borel measure $\mu$ on $S^{n-1}$ is \textit{$G$-invariant} if $
\mu(gE) = \mu(E)$ for all $g \in G$ and every Borel set  $E \subset S^{n-1}$. From this algebraic viewpoint, origin-symmetric convex bodies can be regarded as $\{\pm I\}$-invariant convex bodies, where $I$ denotes the identity map; that is, $\mathcal K_{\{\pm I\}}=\mathcal K_e^n$.

Two classes of groups play a particularly important role in the study of $G$-invariant Minkowski-type problems. The first class consists of irreducible groups. We say that a subgroup $G\subset O(n)$ is irreducible if the only $G$-invariant subspaces of $\mathbb R^n$ are $\{0\}$ and $\mathbb R^n$ (that is, no nontrivial  subspace $V\subset \mathbb R^n$ satisfies $gV=V$ for all $g\in G$). Such symmetry is highly rigid. In \cite{shan2026}, we showed that irreducibility is strong enough to ensure the existence of solutions to the $L_p$ dual Minkowski problem for all $p,q\in\mathbb R$. Note that a convex body invariant under an irreducible group need not be origin-symmetric; for example, one may take $G$ to be the symmetry group of a regular simplex. In particular, when $n=2$ and $G$ is the rotation symmetry group of an equilateral triangle, $G$-invariant convex bodies are precisely those invariant under rotation by $120^\circ$. In Section~3, we also prove that every finite Borel measure on $S^{n-1}$ invariant under an irreducible group automatically satisfies the subspace concentration condition \eqref{subspace concentration condition}. 

The second class consists of groups  with no nonzero fixed points. By \cite{shan2026}, $\mathcal K_G\subset \mathcal K_o^n$ if and only if $G$ has no nonzero fixed points. Moreover, every irreducible group has no nonzero fixed points when $n\ge 2$, and
the group $\{\pm I\}$ clearly has no nonzero fixed points. The condition of having no nonzero fixed points is substantially weaker than
irreducibility; several examples are given in Section~3.
Very recently, B\"or\"oczky, Kov\'acs, Mui, and Zhang~\cite{BKMZ group} studied the absolutely continuous equation \eqref{continuous} from the viewpoint of group symmetry and solved it for $q>0$ and for a certain range of negative $p$, under the assumptions that the density $f$ is
invariant under a closed subgroup with no nonzero fixed points and satisfies suitable regularity conditions.  The relations among these classes of convex bodies are as follows when $n\ge2$:

\begin{figure}[H]
	\centering
	\begin{tikzpicture}[font=\small]
		
		\draw (0,0) ellipse (5.0cm and 2.8cm);
		\node at (0,2.35) {$\mathcal K_o^n$};
		
		\draw (0,-0.05) ellipse (4.15cm and 2.0cm);
		\node[align=center] at (0,1.10)
		{Convex bodies invariant under\\
			groups with no nonzero fixed points};
		
		\draw (-1.65,-0.45) ellipse (2.1cm and 1.05cm);
		\node[align=center, font=\scriptsize] at (-1.65,-0.45)
		{Convex bodies invariant\\
			under irreducible groups};
		
		\draw (1.65,-0.45) ellipse (2.0cm and 1.0cm);
		\node at (1.65,-0.45) {$\mathcal K_e^n$};
		
	\end{tikzpicture}
\end{figure}

In this paper, we give a complete existence characterization for the dual Minkowski problem for $0<q\le n$ in the class of $G$-invariant convex bodies, where $G\subset O(n)$ has no nonzero fixed points. The obstruction is expressed in terms of the concentration of the measure on $G$-invariant subspaces. For $0<q<n$, this leads to the following condition.

\vspace{4pt}
\noindent\textbf{$G$-invariant $q$-th subspace mass inequality.}
We say that a nonzero  finite  Borel measure $\mu$ on $S^{n-1}$ satisfies the \textit{$G$-invariant $q$-th subspace mass inequality} if
\begin{equation}\label{G-invariant q-th subspace mass inequality}
	\frac{\mu(  S^{n-1}\cap L)}{\mu(S^{n-1})}
	<
	\min\left\{\frac{\dim  L}{q},1\right\}
\end{equation}
for every \textbf{$G$-invariant subspace} $ L$ of $\mathbb R^n$ with $0<\dim  L<n$.

The $G$-invariant $q$-th subspace mass inequality only controls the mass distribution on $G$-invariant subspaces. Note that there may be very few such subspaces. For example, if $G$ is irreducible, then there are no nontrivial $G$-invariant subspaces, so \eqref{G-invariant q-th subspace mass inequality} is vacuous in this case. Furthermore, let
$
\R^n=V_1\oplus\cdots\oplus V_m
$
be an orthogonal decomposition, and let
\begin{equation*}
	G=G_1\times\cdots\times G_m\subset O(V_1)\times\cdots\times O(V_m)\subset O(n),
\end{equation*}
 where each $G_j$ is irreducible on $V_j$ (when $\dim V_j=1$ we assume $G_j\neq \{I_{V_j}\}$). Then the $G$-invariant subspaces of $\mathbb R^n$ are precisely
$
\bigoplus_{j\in I}V_j
$
for some subset $I\subset\{1,\dots,m\}$. At the other extreme, when $G=\{\pm I\}$, every subspace of $\mathbb R^n$ is $\{\pm I\}$-invariant. Hence the $\{\pm I\}$-invariant $q$-th subspace mass inequality is exactly  \eqref{qthclassical}.

The existence of solutions relies on the geometric structure of $G$-invariant convex bodies. The sufficiency direction is proved by a variational method. Since only mass information on $G$-invariant subspaces is available,  we  develop a new technical framework for estimates adapted to  $G$-invariant subspace blocks. For $0<q<n$, this leads to the following characterization.

	\begin{theorem}\label{thm:main-q}
	Let $q \in (0,n)$, let $G \subset O(n)$ be a subgroup with no nonzero fixed points, and let $\mu$ be a nonzero finite $G$-invariant Borel measure on $S^{n-1}$. Then there exists a $G$-invariant convex body $K$ in $\mathbb R^n$ such that
	\[
\mu=\widetilde	C_q(K,\cdot)
	\]
	if and only if $\mu$ satisfies the $G$-invariant $q$-th subspace mass inequality.
\end{theorem}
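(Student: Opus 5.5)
We prove the two directions separately. Necessity comes from the known estimates for origin-symmetric bodies, adapted to invariant subspaces. Sufficiency uses the variational scheme of Huang--Lutwak--Yang--Zhang, carried out inside $\KG$, with the compactness argument rebuilt on $G$-invariant subspace blocks.

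\emph{Necessity.} Let $K\in\KG$ with $\mu=\widetilde C_q(K,\cdot)$. Since $G$ has no nonzero fixed points, $K\in\mathcal K_o^n$ by \cite{shan2026}. Fix a $G$-invariant subspace $L$ with $0<\dim L<n$; then $L^\perp$ is also $G$-invariant. The strict bound $\mu(\Sph\cap L)<\mu(\Sph)$ holds because the radial Gauss image of $\Sph\cap L$ has measure zero-complement failure: the set of directions $u\in\Sph$ whose outer normals lie in $L$ is not of full measure, since $K$ has interior points in direction $L^\perp$. For $q>\dim L$, the bound $\mu(\Sph\cap L)<\frac{\dim L}{q}\mu(\Sph)$ follows from the argument of \cite{BHP2018,HLYZdual}. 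That argument writes
\[
\widetilde C_q(K,\Sph\cap L)=\frac1n\int_{\alpha^*_K(\Sph\cap L)}\rho_K^q\,du ,
\]
decomposes $u=(\cos t)v+(\sin t)w$ with $v\in L$ and $w\in L^\perp$, and uses that $\rho_K$ on this set is controlled by $h_K$ restricted to $L$ along the cone over $K\cap L$. Only convexity and $o\in\operatorname{int}K$ enter. The strictness of the inequality (which uses symmetry in \cite{BHP2018}) I would obtain from the equality analysis: equality would force $K$ to be a cylinder-like body $K\cap L+K\cap L^\perp$ with degenerate $L^\perp$-part, which is impossible. I expect necessity to be routine.

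\emph{Sufficiency.} By $G$-invariance of $\mu$, normalize and maximize
\[
\Phi(Q)=\int_{\Sph}\log\rho_Q\,d\mu-\frac{\mu(\Sph)}{q}\log\widetilde V_q(Q)
\]
over star bodies $Q$, or rather over $Q=K^*$ with $K\in\KG$. The steps, in order, are as follows.
\begin{enumerate}
\item By Haar averaging over the closure of $G$ (compact), the Lagrange equation for a maximizer restricted to $\KG$ still gives $\mu=c\,\widetilde C_q(K,\cdot)$. One can perturb by $G$-invariant functions only, but the first variation pairs $\mu$ against $G$-averages, and $\mu$ and $\widetilde C_q(K,\cdot)$ are $G$-invariant. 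Then rescale to remove $c$, using $q\neq0$ homogeneity.
\item Take a maximizing sequence $K_i\in\KG$ normalized by $\widetilde V_q(K_i)=1$, and show it is bounded.
\end{enumerate}
By Blaschke selection and $\KG\subset\mathcal K_o^n$, the limit is then a $G$-invariant body containing the origin in its interior. This last point uses that a compact convex $G$-invariant set has its centroid at a fixed point, hence $o$; and it must be full-dimensional because $\widetilde V_q=1$.

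\emph{Main obstacle: boundedness (step 2).} Suppose $\diam K_i\to\infty$. I would use John ellipsoids averaged over $G$: $K_i$ is comparable, within factors depending on $n$, to a $G$-invariant ellipsoid $E_i$. Such an ellipsoid corresponds to a $G$-commuting positive matrix $A_i$. Split its spectrum into scales and group eigenspaces into $G$-invariant blocks $L_1\oplus\cdots\oplus L_m$ (each spectral projection of $A_i$ commutes with $G$). After passing to a subsequence, order the scales $a_{1,i}\ge\dots\ge a_{m,i}$ with ratios tending to $0$ or to constants, and let the blocks converge in the Grassmannian; the limits are $G$-invariant since that is a closed condition.

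Then estimate $\Phi(K_i^*)$ from above, block by block, in the style of \cite{BLYZZ2019}:
\begin{itemize}
\item the entropy term $\int\log\rho_{K_i^*}\,d\mu\approx-\sum_j\mu(\text{near }\xi_j\setminus\xi_{j-1})\log a_{j,i}$, with $\xi_j=L_1\oplus\cdots\oplus L_j$;
\item $\log\widetilde V_q(K_i)\gtrsim$ the optimal $q$-concave combination $\max_j\big(\sum_{k\le j}\min(\dim\xi_k,q)\cdot\text{increments of}\log a\big)$.
\end{itemize}
Abel summation then gives $\Phi\to-\infty$, exactly when $\mu(\Sph\cap\xi_j)<\min\{\dim\xi_j/q,1\}\mu(\Sph)$ for each partial flag $\xi_j$. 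Only $G$-invariant $\xi_j$ appear, which is why the $G$-invariant inequality suffices. The delicate part is the mass of $\mu$ near, but not on, $\xi_j$. I would handle it by choosing small neighbourhoods using strictness of the inequality with a uniform margin, as in Zhao's and BLYZZ's proofs.

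Compared with a fixed maximizer such as the ball, $\Phi(K_i^*)\to-\infty$ contradicts maximality. Hence $K_i$ is bounded and the limit solves the problem.
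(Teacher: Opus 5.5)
\textbf{Sufficiency.} Your sufficiency sketch takes essentially the paper's route:
\begin{itemize}
\item a variational maximizer over $C^+_G(\Sph)$;
\item the John ellipsoid of $K_\ell\in\KG$ is $G$-invariant and centered, so its eigenspaces are $G$-invariant blocks;
\item Grassmannian limits of these blocks remain $G$-invariant;
\item an entropy bound along the flag $V_1\oplus\cdots\oplus V_r$ via the weighted-sum lemma, and a product-barrier bound on $\widetilde W_{n-q}$.
\end{itemize}
Three slips there. First, as written, your $\Phi$ uses $\widetilde V_q(Q)$ with $Q=K^*$ and a minus sign. It should be $\frac1q\log\widetilde V_q(K)$ with a plus sign, as in the paper's $\Phi_\mu(h_K)$. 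Second, what you need is an \emph{upper} bound on $\widetilde V_q(K_i)$ in terms of the semi-axes, not a lower bound. Third, the barrier lemma needs $\dim\xi_{r-1}<q<\dim\xi_r$. When $q$ equals a flag dimension you need an extra step; the paper passes to a slightly larger $q_0$ via monotonicity of $L^p$ norms and absorbs the loss into the margin $t_0$. Normalizing $\widetilde V_q(K_i)=1$ and proving boundedness, instead of $\diam K_\ell=1$ and nondegeneracy, is an equivalent variant.

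\textbf{Necessity has a genuine gap.} You claim that for $\dim L<q$ the bound $\widetilde C_q(K,\Sph\cap L)\le\frac{\dim L}{q}\widetilde C_q(K,\Sph)$ uses only convexity and $o\in\operatorname{int}K$, with symmetry needed only for strictness. This is false.

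Take $F=\{e_1+y:\ y\perp e_1,\ |y|\le 1\}$ and $K_\varepsilon=\conv(\{-\varepsilon e_1\}\cup F)$. Then $o\in\operatorname{int}K_\varepsilon$. By \eqref{Cq representation}, $\widetilde C_q(K_\varepsilon,\{e_1\})=\frac qn\int_{\conv(\{0\}\cup F)}|x|^{q-n}dx$, which does not depend on $\varepsilon$. Meanwhile $\widetilde C_q(K_\varepsilon,\Sph)=\frac qn\int_{K_\varepsilon}|x|^{q-n}dx$ tends to the same number by dominated convergence. So the mass ratio on $L=\R e_1$ tends to $1>1/q$ for every $q>1$. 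Hence the argument of \cite{BHP2018} needs symmetry already for the non-strict inequality, and you never say where the hypothesis $\Fix(G)=\{0\}$ enters. That hypothesis is precisely what replaces symmetry.

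\textbf{The missing idea (Theorem~\ref{thm:G-invariant-subspace-mass}).}
\begin{enumerate}
\item Fix $u\in\SL$ and set $y_u=\rho_{K|L}(u)u$ and $A_u=K\cap(y_u+\lp)$.
\item Since $L$ is $G$-invariant and $\Fix(G)=\{0\}$, the centroid of $\conv(Gy_u)$ is $0$. Hence $\sum_i\lambda_ig_iy_u=0$ for some convex combination.
\item Because $\lp$ is $G$-invariant, $B_u=\sum_i\lambda_ig_iA_u\subset K\cap\lp$. This plays the role of $\frac12A_u+\frac12(-A_u)$ in the symmetric case.
\item Then $tA_u+(1-t)B_u\subset K\cap(ru+\lp)$ with $t=r/\rho(u)$.
\item Apply Brunn--Minkowski in $\lp$ to the truncations by $RB^n$, then a layer-cake argument for the radially decreasing weight $|x|^{q-n}$. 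This gives $\int_{K\cap(ru+\lp)}|z|^{q-n}\ge\int_{A_u}|z|^{q-n}$ for every section.
\item Polar coordinates in $L$ then give total mass at least $\frac{q}{nk}\int_{\SL}\bigl(\int_{A_u}|z|^{q-n}\bigr)\rho(u)^k$.
\item The fiber identity (Lemma~\ref{lem:fiber-identity}) shows the mass on $\SL$ equals $\frac1n$ times the same integral, so the ratio is at most $k/q$.
\item Strictness comes from the equality case. Equality would force $A_u\cap RB^n$ and $K\cap(ru+\lp)\cap RB^n$ to have the same $\mathcal H^m$-measure for all $R$. This is impossible for $r<R<\min\{\rho(u),\delta\}$ with $\delta B^n\subset K$: the first set is empty since $|y_u|=\rho(u)$, while the second has positive measure.
\end{enumerate}
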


In the critical case $q=n$, which corresponds to the logarithmic Minkowski problem, the measure concentration phenomenon takes a different form.

\vspace{1.7pt}
\noindent\textbf{$G$-invariant subspace concentration condition.}
We say that a nonzero finite Borel measure $\mu$ on $S^{n-1}$ satisfies the \textit{$G$-invariant subspace concentration condition} if
\begin{equation}\label{$G$-invariant subspace concentration condition}
\frac{\mu(S^{n-1}\cap L)}{\mu(S^{n-1})}
\le \frac{\dim L}{n}
\end{equation}
	for every $G$-invariant subspace $L$ of $\mathbb R^n$ with $0<\dim L<n$, and whenever equality holds for some $G$-invariant subspace $L$, there exists a subspace $\widetilde L$ complementary to $L$ such that $\mu$ is concentrated on $S^{n-1}\cap (L\cup \widetilde L)$.

As in the case of \eqref{G-invariant q-th subspace mass inequality}, this condition only tests the mass distribution of $\mu$ on $G$-invariant subspaces. For the $G$-invariant logarithmic Minkowski problem, it provides the necessary and sufficient condition for the existence of a solution.

\begin{theorem}\label{thm:main-log}
	Let $G\subset O(n)$ be a subgroup with no nonzero fixed points. A nonzero finite $G$-invariant Borel measure $\mu$ on $S^{n-1}$ is the cone-volume measure of a $G$-invariant convex body in $\mathbb R^n$ if and only if $\mu$ satisfies the $G$-invariant subspace concentration condition.
\end{theorem}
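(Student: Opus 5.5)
Both directions rest on the fact recalled from \cite{shan2026} that $\mathcal K_G\subset\mathcal K_o^n$ when $G$ has no nonzero fixed points, so every $K\in\KG$ has a positive support function and $\widetilde C_n(K,\cdot)=\frac1n h_K\,dS_K$ is the cone-volume measure $V_K$.

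\textbf{Necessity.} Let $K\in\KG$ and let $L$ be a $G$-invariant subspace with $0<\dim L<n$. I would adapt the Böröczky--Lutwak--Yang--Zhang argument for even bodies, which needs a body symmetric about the origin in a suitable sense; here $G$-invariance will play that role. Choose an orthonormal basis adapted to $L\oplus L^\perp$; note $L^\perp$ is also $G$-invariant. Apply to $K$ the linear maps $A_t=\mathrm{diag}(t I_L, t^{-k/(n-k)}I_{L^\perp})$ with $k=\dim L$. These have determinant $1$ and commute with $G$, so $A_tK\in\KG$. The cone-volume measure of $\Sn\cap L$ equals the volume of the union of cones from the origin over the faces of $K$ with normals in $L$. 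By $G$-invariance and convexity, the projection $P_{L^\perp}K$ is $G$-invariant, and since the restriction of $G$ to $L^\perp$ has no fixed points, it contains the origin in its relative interior. Hence $K$ contains small balls of $L^\perp$ about the origin.

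I would then use the standard inequality $V_K(\Sn\cap L)\le \frac{k}{n}V(K)$. Its proof writes the union of cones over the faces with normals in $L$ as a region of the form $\{x: x_L\in C,\ x_{L^\perp}\in\text{fibre}\}$ and applies a Brunn--Minkowski / fibre-volume ($\lambda^{k}$ scaling) argument along $L$. That argument uses only that the origin lies in the relative interior of $K\cap L^\perp$ and of $P_LK$, which holds here because both sets are $G$-invariant and $G$ fixes no nonzero vector in $L$ or in $L^\perp$. For the equality case I would follow \cite{KLYZ2013JAMS}/\cite{BH2016}: equality forces $K=(K\cap L)+(K\cap\widetilde L)$ for some complement $\widetilde L$, and then $V_K$ is concentrated on $\Sn\cap(L\cup\widetilde L)$. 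A cleaner route may be Böröczky--Henk \cite{BH2016}: $G$-invariance with no fixed points forces the centroid of $K$ to be a fixed point, hence $0$. Their theorem then gives the classical subspace concentration condition, and in particular the $G$-invariant one. I would take this route.

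\textbf{Sufficiency.} I would proceed variationally, as in the $0<q<n$ proof of Theorem \ref{thm:main-q}. Maximize
\[
\Phi(K)=\int_{\Sn}\log h_K\,d\mu\quad\text{over }K\in\KG,\ V(K)=V(B^n)
\]
(or minimize the reciprocal functional). By the usual Lagrange argument restricted to $G$-invariant variations, a maximizer gives $\mu=c\,V_K$. Averaging $\log h_{K_t}$ perturbations over $G$ keeps the perturbed bodies in $\KG$, and Aleksandrov's lemma applies.

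The core step, and the main obstacle, is \emph{compactness}: showing that a maximizing sequence stays bounded. I would use the $G$-invariant John ellipsoid. This ellipsoid is $G$-invariant, so its axis eigenspaces decompose $\Rn$ into $G$-invariant blocks grouped by growth rate, which is the subspace-block framework mentioned in the introduction. Assume the ellipsoid degenerates. Order the blocks $L_1\subset L_1\oplus L_2\subset\cdots$ by semi-axis size; each partial sum is $G$-invariant. Estimating $\Phi$ from above blockwise, with $\mu$-masses of $\Sn\cap(L_1\oplus\cdots\oplus L_j)$ and volume normalization, gives a bound of the form $\sum_j(\mu\text{-mass ratio}-\frac{\dim}{n})\log(\text{axis ratios})\to-\infty$.

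When the inequality in the concentration condition is strict, this contradiction is immediate. The equality case needs separate treatment. Split $\mu=\mu_L+\mu_{\widetilde L}$. Take $\widetilde L$ to be $G$-invariant after replacing it by $L^\perp$, which is possible since $G$-invariance of $\mu$ forces $\mu$ to vanish off $L\cup gL'$ for all $g$. Solve lower-dimensional $G$-invariant problems on $L$ and $L^\perp$ by induction on dimension, using that $G$ restricted to each factor still has no fixed points. Then take the product body $K_1+K_2$, whose cone-volume measure splits with the masses in the correct proportion $\dim L/n$.
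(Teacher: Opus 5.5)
Your necessity argument (the centroid of $K\in\KG$ is fixed by $G$, hence $0$, and then \cite{BH2016}) and your plan for the case of strict inequalities ($G$-invariant John ellipsoid, $G$-invariant eigenspace blocks, blockwise entropy bound) follow the paper. The genuine gap is the equality case.

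You may not replace $\widetilde L$ by $L^\perp$. The part of $\mu$ off $L$ is carried by $\Sph\cap\widetilde L$, and $\widetilde L$ need not be orthogonal to $L$, even when it is $G$-invariant. Already for $G=\{\pm I\}$ in $\R^2$, the measure $\frac14(\delta_{e_1}+\delta_{-e_1}+\delta_v+\delta_{-v})$ with $v\notin\R e_1\cup\R e_2$ is the cone-volume measure of a parallelogram. It attains equality on $L=\R e_1$ with $\widetilde L=\R v$, and gives no mass to $L^\perp$. The same happens for the diagonal rotation group of Remark~\ref{rem:irreducible-vs-fixed}, with $L=\R^2\oplus\{0\}$ and $\widetilde L=L_{a,b}$, $(a,b)\neq(0,0)$.

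Your observation that $\mu$ vanishes off $L\cup g\widetilde L$ for all $g$ does yield something: $\bigcap_g g\widetilde L$ carries mass $\frac{n-\dim L}{n}\abs{\mu}$, hence equals $\widetilde L$ by the concentration inequality. So $\widetilde L$ is $G$-invariant; it is the paper's $M=\Span(\supp\mu\setminus L)$. But nothing makes it $L^\perp$.

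Nor can your product step simply be run on $L\oplus\widetilde L$. Take $K_1\subset L$ and $K_2\subset\widetilde L$, full-dimensional there. At $\mathcal H^{n-1}$-almost every boundary point, $K_1+K_2$ locally contains a translate of $K_1$ or of $K_2$. Hence its normals are orthogonal to $L$ or to $\widetilde L$, and $V_{K_1+K_2}$ is concentrated on $\Sph\cap(L^\perp\cup\widetilde L^\perp)$, not on $\Sph\cap(L\cup\widetilde L)$.

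What closes the gap is the gluing construction the paper takes from \cite{KLYZ2013JAMS} (Lemma 7.2) and \cite{CLZ2019} (Lemma 5.2). First solve the restricted problems by $C\subset L$ and $C'\subset M$. Then map $C$ into $M^\perp$ along $L^\perp$ and $C'$ into $L^\perp$ along $M^\perp$, obtaining $D\subset M^\perp$ and $D'\subset L^\perp$. The normals of $aD+a'D'$ are now orthogonal to $L^\perp$ or to $M^\perp$, i.e.\ they lie in $L$ or in $M$. Suitable $a,a'>0$ give $V_K=\mu$, and $K$ is $G$-invariant because $L^\perp$ and $M^\perp$ are.

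Alternatively, let $T$ be the identity on $L$ and $P_{L^\perp}$ on $\widetilde L$. It commutes with $G$ and maps $\widetilde L$ onto $L^\perp$. Using the $\mathrm{GL}(n)$-equivariance of cone-volume measures, you can transport the problem by $T$, use your orthogonal product, and pull back by a multiple of $T^{t}$.

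In either version you must also check, as the paper does, that the restrictions of $\mu$ satisfy the lower-dimensional invariant condition, including the equality clause (here $U'\cap L$ is a complement of $U$ in $L$).

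Two smaller corrections. First, maximizing $\int\log h_K\,d\mu$ under $V(K)=V(B^n)$ is the wrong direction: its supremum is $+\infty$ already for spherical Lebesgue measure. You must minimize it, i.e.\ maximize the paper's $\Phi_\mu$ with $q=n$, which is what your ``$\to-\infty$'' bound actually concerns. Second, the John blocks move with $\ell$. The mass hypothesis must therefore be applied to a Grassmannian limit of the blocks, which is $G$-invariant by Lemma~\ref{lem:Grass}, through neighbourhoods as in Lemma~\ref{lem:entropy}. You should also pass from $G$ to $\overline G$ first, so that averaging over the group makes sense.
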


When $G=\{\pm I\}$, Theorems~1.1 and~1.2 reduce to the corresponding results
in the even setting.


At first sight, the $G$-invariant subspace concentration condition is much weaker than the classical subspace concentration condition.
The fewer $G$-invariant subspaces there are, the less information this condition contains; in a certain sense, however, this reflects the stronger symmetry imposed by $G$. We obtain solutions to the $G$-invariant logarithmic Minkowski problem under this condition, which indicates that the group symmetry precisely compensates for the missing  information.

If $G$ is irreducible, then the $G$-invariant subspace concentration condition is vacuous; however, Proposition~\ref{irreducible-subspace-mass} shows that every nonzero finite Borel measure on $S^{n-1}$ invariant under an irreducible group automatically satisfies the classical subspace concentration condition. At the opposite extreme, when $G=\{\pm I\}$, every subspace of $\mathbb R^n$ is $G$-invariant, and hence the $G$-invariant subspace concentration condition coincides with the classical  subspace concentration condition. Thus, in both extreme cases, the condition imposed on $G$-invariant subspaces already yields the classical subspace concentration condition on all subspaces.

It is therefore natural to ask whether this implication holds beyond these two extreme cases. Combining Theorem~\ref{thm:main-log} with the result in \cite{BH2016} on cone-volume measures of centered convex bodies, we obtain the following structural consequence for $G$-invariant measures. It shows that the $G$-invariant subspace concentration condition already
controls concentration on all subspaces.

\begin{theorem}\label{thm equivalence}
	Let $G\subset O(n)$ be a subgroup with no nonzero fixed points. Let $\mu$ be a nonzero finite $G$-invariant Borel measure on $\Sph$. Then the following are equivalent:
	
	\smallskip
	
	\noindent
	\textup{(i)} The measure $\mu$ satisfies the $G$-invariant subspace concentration condition; namely, for every $G$-invariant subspace $L\subset \R^n$,
	\[
	\frac{\mu( \Sph\cap L)}{\mu(\Sph)}
	\le \frac{\dim L}{n},
	\]
	and whenever equality holds for some $G$-invariant subspace $L\subset \R^n$, there exists a subspace $\widetilde L$ complementary to $L$ in $\R^n$ such that $\mu$ is concentrated on $S^{n-1}\cap (L\cup \widetilde L)$.
	
	\smallskip
	
	\noindent
	\textup{(ii)} The measure $\mu$ satisfies the classical subspace concentration condition; namely, for every subspace $\xi\subset \R^n$,
	\[
	\frac{\mu( \Sph \cap \xi)}{\mu(\Sph)}
	\le \frac{\dim \xi}{n},
	\]
	and whenever equality holds for some subspace $\xi\subset \R^n$, there exists a subspace $\xi'$ complementary to $\xi$ in $\R^n$ such that $\mu$ is concentrated on $S^{n-1}\cap (\xi\cup \xi')$.
\end{theorem}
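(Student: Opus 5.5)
The implication (ii)$\Rightarrow$(i) is immediate. Every $G$-invariant subspace is in particular a subspace of $\R^n$, so the inequality in (ii) specializes to the one in (i). The equality clause also transfers directly: if equality holds for a $G$-invariant $L$, then (ii) supplies a complementary subspace $\xi'$, and we take $\widetilde L=\xi'$. Note that (i) does not ask for $\widetilde L$ to be $G$-invariant. So all the work is in (i)$\Rightarrow$(ii).

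For (i)$\Rightarrow$(ii), the plan is to pass through Theorem~\ref{thm:main-log}. Let $\mu$ be $G$-invariant and satisfy the $G$-invariant subspace concentration condition. Theorem~\ref{thm:main-log} then gives a $G$-invariant convex body $K$ whose cone-volume measure is $\mu$. Since $G$ has no nonzero fixed points, $K\in\mathcal K_o^n$, as recalled from \cite{shan2026}.

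Next I show that the centroid $c(K)$ is the origin. For each $g\in G$ we have $gK=K$ and $g$ is orthogonal, so a change of variables gives $g\,c(K)=c(gK)=c(K)$. Thus $c(K)$ is a fixed point of $G$, and the hypothesis forces $c(K)=0$.

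Finally I invoke B\"or\"oczky and Henk \cite{BH2016}: the cone-volume measure of a convex body with centroid at the origin satisfies the subspace concentration condition. This covers both the inequality for every subspace $\xi$ with $0<\dim\xi<n$ and the complementary-subspace clause in the equality case. Since that cone-volume measure is $\mu$, we obtain (ii).

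The main obstacle is only to make sure the cited results line up with the statement. First, the result of \cite{BH2016} must include the equality characterization: I recall it states that equality forces $K$ to split as $K=(K\cap\xi)+(K\cap\xi')$, which makes $\mu$ concentrated on $S^{n-1}\cap(\xi\cup\xi')$, and I will cite it in that form. Second, the subspaces with $\dim\xi\in\{0,n\}$ are trivial and need no argument. Everything else reduces to Theorem~\ref{thm:main-log}, which may be assumed.
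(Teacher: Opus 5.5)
Your proposal is correct and follows the paper's proof. The direction (ii)$\Rightarrow$(i) is trivial, and (i)$\Rightarrow$(ii) goes through the existence theorem for the $G$-invariant logarithmic Minkowski problem, the fact that a $G$-invariant body has its centroid in $\mathrm{Fix}(G)=\{0\}$, and the theorem of B\"or\"oczky--Henk \cite{BH2016}. One slip does not affect the argument: your paraphrase of the equality case as $K=(K\cap\xi)+(K\cap\xi')$ is wrong unless $\xi'=\xi^{\perp}$, because a sum of bodies lying in $\xi$ and $\xi'$ has its normals in $\xi^{\perp}\cup(\xi')^{\perp}$ (the correct splitting is $K=(K\cap(\xi')^{\perp})+(K\cap\xi^{\perp})$), so cite \cite{BH2016} directly for the measure-concentration form of the equality clause.
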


This reveals a surprising geometric structure of measures under group symmetry,
and the Minkowski problem serves as a bridge between algebra and geometry. We
note that the bound $\dim(\cdot)/n$ in Theorem~\ref{thm equivalence} is sharp.
Indeed, if this bound were replaced by a strictly smaller one, then the equivalence
in Theorem~\ref{thm equivalence} would fail in general; this can be seen by
taking
$G$ to be a suitable irreducible group, such as the rotation symmetry group
of a cube.

	\section{Preliminaries}
	Let $\left( \mathbb{R}^n, \langle \cdot, \cdot \rangle\right) $ denote the $n$-dimensional Euclidean space  with the standard inner product. 
	
	Let $S^{n-1}$ denote the unit sphere and $B^{n}$ the closed unit ball centered at the origin in $\mathbb{R}^n$. Let $\mathcal{B}^n$ denote the class of Euclidean balls centered at the origin in $\mathbb{R}^n$.
	
		For a linear subspace $U\subset \R^n$, we write $P_U$ for the orthogonal projection onto $U$,  $B_U$ for the Euclidean unit ball of $U$, and $A|U=P_UA$ for the orthogonal projection of a set $A\subset\R^n$ onto $U$.  If $U$ has dimension $k$, then $V_k(\cdot)$ denotes $k$-dimensional Lebesgue measure on $U$.
		We say that a subspace $U$ of $\R^n$ is nontrivial (or proper) if $U\neq \{0\}$ and $U\neq \R^n$.
	
	A \textit{convex body} is a compact convex subset of $ \mathbb{R}^{n} $ with nonempty interior. The set of convex bodies in  $\mathbb{R}^{n} $ is denoted by $ \mathcal{K}^{n}$,  the set of origin-symmetric convex bodies in  $\mathbb{R}^{n} $ is denoted by $ \mathcal{K}_{e}^{n}$, and $ \mathcal{K}_{o}^{n} $ denotes the set of convex bodies in $\mathbb{R}^{n}$ that contain the origin in their interior.
	
	The \textit{support function} of a compact convex set $ K $ is given by 
	$$ h_{K}(x)=\max _{y\in K} \la y , x \ra, \quad \text{for $x\in \Rn$}.$$
	
	A compact set $K$ is \textit{star-shaped} with respect to a point $x$ if every line through $x$ that meets $K$ does so in a (possibly degenerate) closed line segment.
	If $K$ is a  star-shaped set with respect to the origin, then for all $u\in S^{n-1}$, its \textit{radial  function} $ \rho_K(\cdot )$   is defined by (see \cite{Gaedner gtbook,schneiderbook2014})
	\[
	\rho_K(u)=\max\{c:cu\in K\}.
	\]
	The class of \textit{star bodies} $\mathcal{S}^{n}_{o}$ in $\Rn$ is defined to be the family of star-shaped sets whose radial functions are positive and continuous. The radial function is  homogeneous of degree $-1$, that is,
	\begin{align}\label{rad_fun_homo}
		\rho_K(cu)=c^{-1}\rho_K(u), \quad c>0.
	\end{align}
	For an invertible linear transformation $\phi$, the support function $h_K$ and the radial function $\rho_K$ transform as follows:
	\begin{align}
		h_{\phi K}(y) &= h_K(\phi^T y), \label{eq:support-transform} \\
		\rho_{\phi K}(y) &= \rho_K(\phi^{-1} y). \label{eq:radial-transform}
	\end{align}
	If $K\in \mathcal{S}_o^k$ and $L\in \mathcal{S}_o^{n-k}$, then, for $(x,y)\in \mathbb{R}^k\times \mathbb{R}^{n-k}$ with $x\neq 0$ and $y\neq 0$,
	\begin{equation}\label{product radial}
		\rho_{K\times L}(x,y)=\min\{\rho_K(x),\rho_L(y)\}.
	\end{equation}

	
	For a convex body $K \in \mathcal{K}_o^n$ and a Borel set $\eta \subset S^{n-1}$, define
	\[
	\alpha_K^*(\eta) =  \big\{u \in S^{n-1} : \rho_K(u)u \in \nu_K^{-1}(\eta)\big\},
	\]
	where  $\nu_{K}$ is the outer unit normal vector  of $K$.
	
Dual quermassintegrals are fundamental geometric invariants in the dual Brunn-Minkowski theory. For each $K\in \mathcal{S}^{n}_{o}$ and $q=1,\ldots,n$,
$$
\widetilde{W}_{n-q}(K)=\frac{\omega_n}{\omega_q}\int_{\Gr(n,q)} \mathcal{H}^q(K\cap\xi)\,d\xi,
$$
where the integral is taken with respect to the rotation-invariant probability measure on the Grassmannian $\Gr(n,q)$ of all $q$-dimensional subspaces of $\mathbb{R}^n$. Here $\mathcal{H}^q$ denotes the $q$-dimensional Hausdorff measure, and $\omega_q$ denotes the $q$-dimensional volume of the unit ball in $\mathbb{R}^q$.	
	For general $q\in \mathbb{R}$, the $(n-q)$-th dual quermassintegral of $K$ is defined by
	$$
	\widetilde W_{n-q}(K)=\frac{1}{n}\int_{\Sph}\rho_K(u)^q\,du.
	$$

	For $q\in\mathbb R$, the $q$-th dual curvature measure of a convex body $K\in\mathcal K_o^n$ is defined by
	\begin{equation}\label{qth-dual-curvature-measure}
		\widetilde C_q(K,\eta)
		=\frac{1}{n}\int_{\alpha_K^*(\eta)} \rho_K(u)^q\,d\mathcal H^{n-1}(u),
	\end{equation}
	for every Borel set $\eta\subset S^{n-1}$.
	In particular, when $q=n$, $\widetilde C_n(K,\cdot)$ is the cone-volume measure $V_K$, namely
	\begin{equation}\label{cone-volume-measure}
		\widetilde C_n(K,\eta)=V_K(\eta)
		=\frac{1}{n}\int_{\nu_K^{-1}(\eta)}
		\langle x,\nu_K(x)\rangle\,d\mathcal H^{n-1}(x).
	\end{equation}
By \cite{BHP2018}, the dual curvature measure admits the following integral
representation: for each $K\in\mathcal K_o^n$, Borel set
$\eta\subset S^{n-1}$, and $q>0$,
\begin{equation}\label{Cq representation}
	\widetilde C_q(K,\eta)
	=
	\frac{q}{n}
	\int_{\substack{x\in K\setminus\{0\}\\ x/|x|\in \alpha_K^*(\eta)}}
	|x|^{\,q-n}\,d\mathcal H^n(x).
\end{equation}
	
	The set of continuous positive functions on the sphere $ S^{n-1} $ will be denoted by $ C^{+}(S^{n-1}) $. For each $ f\in C^{+}(S^{n-1}) $, the Wulff shape $ [f] $ generated by $ f $ is a convex body defined by
	\begin{equation}\label{Wulff shape}
		[f]=\left\{x\in \mathbb{R}^{n}: \la x, v \ra \le f(v)\; \text{for all}\; v\in S^{n-1}\right\}.
	\end{equation}

The variational formula for dual quermassintegrals was established in \cite{HLYZdual,lyz2018}. More precisely, for $q\neq 0$ and $f\in C(S^{n-1})$, the $q$-th dual curvature measure $\widetilde{C}_q(K,\cdot)$ is characterized by the logarithmic Wulff perturbation formula
\begin{equation}\label{vari}
	\left.\frac{d}{dt}\widetilde W_{n-q}([h_t])\right|_{t=0}
	=
	q\int_{\Sph} f(v)\,d\widetilde{C}_q([h_0],v),
\end{equation}
whenever $h_0\in C^+(\Sph)$ and
$
\log h_t=\log h_0+t f+o(t)
$
in $C(\Sph)$ for sufficiently small $|t|$.

We shall use the following Gaussian integral representation.

\begin{lemma}[Lemma 5.2, \cite{BLYZZ2019}]\label{lem:gauss}
	Let $0<q<n$, and let $S$ be a star body in $\R^n$. Then
	\[
	\int_{\R^n}\rho_S(z)^q e^{-\abs{z}^2}\,dz
	=
	c_0(n,q)\,\widetilde W_{n-q}(S),
	\]
	where
	\[
	c_0(n,q)=n\int_0^\infty e^{-r^2}r^{n-q-1}\,dr.
	\]
\end{lemma}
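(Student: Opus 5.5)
The statement immediately above is Lemma~\ref{lem:gauss}, the Gaussian integral representation of the dual quermassintegral. I will prove it by passing to polar coordinates and using homogeneity of the radial function.

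Write $z=ru$ with $r>0$ and $u\in\Sph$, so that $dz=r^{n-1}\,dr\,du$, where $du$ is spherical Lebesgue measure. By the homogeneity \eqref{rad_fun_homo}, $\rho_S(ru)=r^{-1}\rho_S(u)$, and therefore $\rho_S(ru)^q=r^{-q}\rho_S(u)^q$. The integrand is nonnegative and measurable, since $\rho_S$ is continuous away from the origin and the origin is a null set. Tonelli's theorem then gives
\[
\int_{\R^n}\rho_S(z)^q e^{-\abs{z}^2}\,dz
=\int_{\Sph}\rho_S(u)^q\,du\int_0^\infty e^{-r^2}r^{n-q-1}\,dr .
\]

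Next I check that both factors are finite. Because $S$ is a star body, $\rho_S$ is continuous and positive on the compact set $\Sph$, so the spherical integral is finite and positive. In the radial integral, the exponent satisfies $n-q-1>-1$ because $q<n$, so the integral converges near $0$. The Gaussian factor makes it converge at $\infty$. Hence $c_0(n,q)$ is a finite positive constant.

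Finally, the definition $\widetilde W_{n-q}(S)=\frac1n\int_{\Sph}\rho_S(u)^q\,du$ means the spherical factor equals $n\,\widetilde W_{n-q}(S)$. Multiplying by the radial integral yields exactly $c_0(n,q)\,\widetilde W_{n-q}(S)$ with $c_0(n,q)=n\int_0^\infty e^{-r^2}r^{n-q-1}\,dr$.

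The argument is elementary. The only point needing care is the convergence of the radial integral at $0$, and this is precisely where the hypothesis $q<n$ is used.
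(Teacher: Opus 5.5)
Your argument is correct. Polar coordinates, the $(-1)$-homogeneity \eqref{rad_fun_homo}, and Tonelli's theorem split the integral into $n\,\widetilde W_{n-q}(S)$ times $\int_0^\infty e^{-r^2}r^{n-q-1}\,dr$, and the radial factor is finite exactly because $q<n$. The paper gives no proof here and simply quotes the identity from \cite{BLYZZ2019}; your polar-coordinate computation is the standard way to prove it.
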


We shall also use the following elementary weighted-sum lemma.

\begin{lemma}[Lemma 4.1, \cite{BLYZZ2019}]\label{lem:weights}
	Let $\lambda_1,\dots,\lambda_m\in [0,1]$ satisfy
	\[
	\lambda_1+\cdots+\lambda_m=1.
	\]
	Let $a_1\le a_2\le \cdots\le a_m$ be real numbers. Suppose that 
	$\sigma_0,\sigma_1,\dots,\sigma_m\in[0,\infty)$ satisfy
	\[
	\sigma_0=0,\qquad \sigma_m=1,\qquad
	\lambda_1+\cdots+\lambda_i\le \sigma_i
	\quad (1\le i\le m-1).
	\]
	Then
	\[
	\sum_{i=1}^m \lambda_i a_i
	\ge
	\sum_{i=1}^m (\sigma_i-\sigma_{i-1})a_i.
	\]
\end{lemma}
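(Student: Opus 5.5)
The plan is a direct summation by parts (Abel summation). Neither the constraint $\lambda_i\in[0,1]$ nor the nonnegativity of the $\sigma_i$ will be needed; only the partial-sum inequalities and the monotonicity of the $a_i$ matter.

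Introduce the partial sums $\Lambda_0=0$ and $\Lambda_i=\lambda_1+\cdots+\lambda_i$ for $1\le i\le m$. By hypothesis $\Lambda_m=1=\sigma_m$, $\Lambda_0=0=\sigma_0$, and $\Lambda_i\le\sigma_i$ for $1\le i\le m-1$. Since $\lambda_i=\Lambda_i-\Lambda_{i-1}$, both sides of the desired inequality have the same form $\sum_{i=1}^m(\tau_i-\tau_{i-1})a_i$, with $\tau=\Lambda$ on the left and $\tau=\sigma$ on the right.

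For any sequence $\tau_0,\dots,\tau_m$ with $\tau_0=0$, summation by parts gives
\[
\sum_{i=1}^m(\tau_i-\tau_{i-1})a_i=\tau_m a_m-\sum_{i=1}^{m-1}\tau_i\,(a_{i+1}-a_i).
\]
This follows by expanding the left side and regrouping the coefficient of each $\tau_i$. Apply it with $\tau=\Lambda$ and with $\tau=\sigma$, and subtract. The boundary terms cancel because $\Lambda_m=\sigma_m=1$, leaving
\[
\sum_{i=1}^m\lambda_i a_i-\sum_{i=1}^m(\sigma_i-\sigma_{i-1})a_i=\sum_{i=1}^{m-1}(\sigma_i-\Lambda_i)(a_{i+1}-a_i).
\]

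Each factor $\sigma_i-\Lambda_i$ is nonnegative by hypothesis, and each $a_{i+1}-a_i$ is nonnegative by the ordering $a_1\le\cdots\le a_m$. Hence the right-hand side is $\ge0$, which is the claim. There is no genuine obstacle; the only point requiring care is to check the index bookkeeping in the summation-by-parts identity, namely that the $\tau_0$ term vanishes and that $\tau_m$ appears only in the boundary term.
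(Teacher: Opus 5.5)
Your summation-by-parts argument is correct and complete: for $\tau_0=0$ the identity $\sum_{i=1}^m(\tau_i-\tau_{i-1})a_i=\tau_m a_m-\sum_{i=1}^{m-1}\tau_i(a_{i+1}-a_i)$ holds, and the difference of the two sides reduces to $\sum_{i=1}^{m-1}(\sigma_i-\Lambda_i)(a_{i+1}-a_i)\ge 0$. The paper gives no proof of this lemma and simply quotes it from \cite{BLYZZ2019}; your Abel-summation argument is the standard proof of this fact.
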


	For a subgroup $G$ of $O(n)$,  a convex body $K \subset \mathbb{R}^n$ is \textit{$G$-invariant} if it satisfies
	\[
	gK= K \quad \text{for all $g \in G$}.
	\]
	A function $f$ defined on $\Sn$ is \textit{$G$-invariant} if
	\[
	f(gx) = f(x) \quad \text{for all $ g \in G, x\in \Sn$}.
	\]
	A Borel measure $\mu$ on $S^{n-1}$ is \textit{$G$-invariant} if
	\[
	\mu(gE) = \mu(E) \quad \text{for all $g \in G$  and all Borel sets  $E \subset S^{n-1}$}.
	\]
	A subspace $L\subset\mathbb R^n$ is \textit{$G$-invariant} if
	\[
	gL=L \quad \text{for all } g\in G.
	\]

	 We write
	\[
	\KG=\{K\in \mathcal K^n:\ gK=K\text{ for all }g\in G\}.
	\]
	 Note that, if $f\in C^{+}(\Sn)$ is a $G$-invariant function, then the Wulff shape $[f]$ is clearly a $G$-invariant convex body. Since for any $g\in G$,
	\begin{align*}
		g[f]=&\left\{gx\in \mathbb{R}^{n}: \la x, u \ra \le f(u),\;  u\in S^{n-1}\right\}\\
		=&\left\{y\in \mathbb{R}^{n}: \la g^{-1}y, u \ra \le f(u),\;  u\in S^{n-1}\right\}\\
		=&\left\{y\in \mathbb{R}^{n}: \la y, gu \ra \le f(gu),\;  u\in S^{n-1}\right\}\\
		=&\left\{y\in \mathbb{R}^{n}: \la y, v \ra \le f(v),\;  v\in S^{n-1}\right\}=[f].
	\end{align*}
	Conversely, if $K$ is a $G$-invariant convex body, then its support function is naturally $G$-invariant from \eqref{eq:support-transform}. 
	 
	 A complete classification of $G$-invariant convex bodies in relation to several classical classes of convex bodies is summarized in the following table; see \cite{shan2026}.
	  \begin{table}[h]
	 	\centering
	 	\begin{tabular}{|c|c|c|c|c|}
	 		\hline
	 		\diagbox[width=5.5em, height=1.8em]{}{$\mathcal{C}$} & $\mathcal{B}^n$ & $\mathcal{K}_e^n$ & $\mathcal{K}_o^n$ & $\mathcal{K}^n$ \\ 
	 		\hline
	 		$\mathcal{K}_{G} = \mathcal{C}  \iff$ & 
	 		\begin{minipage}{0.18\textwidth}
	 			\vspace{0.2em}
	 			$\overline{Gv}=\Sn$ for all  $v\in S^{n-1}$
	 			\vspace{0.2em}
	 		\end{minipage} & 
	 		\begin{minipage}{0.18\textwidth}
	 			$G = \{\pm I\}$
	 		\end{minipage} & 
	 		\begin{minipage}{0.18\textwidth}
	 			No such $G$
	 		\end{minipage} &
	 		\begin{minipage}{0.18\textwidth}
	 			$G=\{I\}$
	 		\end{minipage} \\ 
	 		\hline
	 		$\mathcal{K}_{G} \subset \mathcal{C} \iff$ & 
	 		\begin{minipage}{0.18\textwidth}
	 			\vspace{0.2em}
	 			$\overline{Gv}=\Sn$ for all  $v\in S^{n-1}$
	 			\vspace{0.2em}
	 		\end{minipage} & 
	 		\begin{minipage}{0.18\textwidth}
	 			\vspace{0.2em}
	 			$-x \in \overline{Gx}$ for all $x\in \Rn$
	 			\vspace{0.2em}
	 		\end{minipage} & 
	 		\begin{minipage}{0.18\textwidth}
	 			\vspace{0.2em}
	 			No nonzero fixed points
	 			\vspace{0.2em}
	 		\end{minipage} &
	 		\begin{minipage}{0.18\textwidth}
	 			Always true
	 		\end{minipage} \\ 
	 		\hline
	 		$\mathcal{K}_{G} \supset \mathcal{C} \iff$ & 
	 		\begin{minipage}{0.18\textwidth}
	 			\vspace{0.2em}
	 			Always true
	 			\vspace{0.2em}
	 		\end{minipage} & 
	 		\begin{minipage}{0.18\textwidth}
	 			\vspace{0.2em}
	 			$G = \{I\}$ or $\{\pm I\}$
	 			\vspace{0.2em}
	 		\end{minipage} & 
	 		\begin{minipage}{0.18\textwidth}
	 			$G = \{I\}$
	 		\end{minipage} &
	 		\begin{minipage}{0.18\textwidth}
	 			$G=\{I\}$
	 		\end{minipage} \\
	 		\hline
	 	\end{tabular}
	 \end{table} 
	 
	 \noindent  where $Gv = \{ gv : g \in G \}$ is the orbit and $\overline{Gv}$ is its closure.

	The class of $G$-invariant convex bodies is  closed under the operations of   $L_p$ Minkowski addition $+_{p}$, scalar multiplication, polar  operation $*$, and convex hull operation  (if $K, L\in \mathcal{K}_{G}$, then $\conv(K, L) \in \mathcal{K}_{G}$). The algebraic structure of group-invariant convex bodies ensures their abundance.
	
	We say a group $G$ is irreducible if its action on $\mathbb{R}^n$ is irreducible  (i.e., the only subspaces $V \subset \mathbb{R}^n$ satisfying $gV = V$ for all $g \in G$ are $V = \{0\}$ and $V = \mathbb{R}^n$).  Let
	\[
	\mathrm{Fix}(G):=\{x\in \mathbb R^n: gx=x \text{ for all } g\in G\}
	\]
	denote the set of fixed points of the group $G$.
	If $G\subset O(n)$ is irreducible, it has no nonzero fixed points ($n\ge2$). If there exists a nonzero fixed point $x_1 \in \mathbb{R}^n$ satisfying $gx_1 = x_1$ for all $g \in G$, then the line $\mathbb{R}x_1$ forms a nontrivial $G$-invariant subspace, contradicting the irreducibility. 
	
	Since the centroid of a convex body is equivariant under linear transformations, if $G\subset O(n)$ satisfies $\mathrm{Fix}(G)=\{0\}$, then the centroid of every $G$-invariant convex body lies at the origin.

	For $1\le k\le n$, we denote by $\Gr(n,k)$ the Grassmannian of all $k$-dimensional linear subspaces of $\mathbb R^n$. For $E,F\in \Gr(n,k)$, let $P_E$ and $P_F$ be the orthogonal projections onto $E$ and $F$, respectively, and define
	\begin{equation}\label{grassmaiann metric}
	d(E,F):=\|P_E-P_F\|,
	\end{equation}
	where, for a linear operator $T:\mathbb R^n\to\mathbb R^n$,
	\begin{equation}\label{normdef}
	\|T\|:=\sup\{|Tx|:x\in \mathbb R^n,\ |x|=1\}
	\end{equation}
	denotes the operator norm induced by the Euclidean norm. This makes $\Gr(n,k)$ into a compact metric space.
	

\section{Measure concentration on $G$-invariant subspaces}
	
If $G\subset O(n)$ has no nonzero fixed points and $K$ is a $G$-invariant convex body, then the centroid of $K$ is fixed by every element of $G$. Hence the centroid of $K$ is the origin.
To estimate the dual curvature measures of $G$-invariant convex bodies, we need the following lemma.
	\begin{lemma}\label{lem:parallel-sections}
Let $G\subset O(n)$,	 and let $E$ be a proper $G$-invariant subspace of $\mathbb R^n$ with $\dim E=m\ge1$. Let $a\in E^\perp$, let $C\subset a+E$ be a nonempty compact convex set,  let  $h_1,\dots,h_N\in G$, and set $h_0=I$. Put
		\[
		 C_i=h_iC\subset h_i a+E,\qquad i=0,\dots,N.
		\]
		If $\beta_0,\dots,\beta_N\ge 0$ and $\sum_{i=0}^N\beta_i=1$, then
		\[
		M:=\sum_{i=0}^N \beta_i C_i
		\]
		satisfies
		\begin{equation}\label{3.1}
		\int_M |x|^{\,q-n}\,d\mathcal H^m(x)\ge \int_C |x|^{\,q-n}\,d\mathcal H^m(x)
		\end{equation}
		for every $q\in(0,n)$. Moreover, if both integrals are finite, then equality holds if and only if
		\[
		\mathcal H^m(M\cap RB^n)=\mathcal H^m(C\cap RB^n)
		\]
		for every $R>0$.
	\end{lemma}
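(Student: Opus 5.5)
The idea is to reduce everything to a comparison, for each radius $R>0$, of the truncated volumes $\mathcal H^m(M\cap RB^n)$ and $\mathcal H^m(C\cap RB^n)$, and then integrate via the layer-cake formula. Since $E$ is $G$-invariant and $G\subset O(n)$, also $E^\perp$ is $G$-invariant. So $h_ia\in E^\perp$, and $h_i$ restricts to an orthogonal map of $E$. Write $C=a+C'$ with $C'\subset E$ compact convex. Then $C_i=h_ia+h_iC'$ with $h_iC'\subset E$, and
\[
M=b+M',\qquad b:=\sum_{i=0}^N\beta_ih_ia\in E^\perp,\qquad M':=\sum_{i=0}^N\beta_i h_iC'\subset E .
\]
By the triangle inequality and $|h_ia|=|a|$, we get $|b|\le |a|$.

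\textbf{Step 1: truncated volumes inside $E$.} For $y\in E$ we have $|b+y|^2=|b|^2+|y|^2$. Hence, for $R\ge |b|$,
\[
M\cap RB^n=b+\bigl(M'\cap s_bB_E\bigr),\qquad s_b:=\sqrt{R^2-|b|^2},
\]
and similarly $C\cap RB^n=a+(C'\cap s_aB_E)$ with $s_a:=\sqrt{R^2-|a|^2}$. Since $|b|\le|a|$, we have $s_b\ge s_a$ whenever the $C$-side set is nonempty.

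\textbf{Step 2: Brunn--Minkowski.} Fix $s\ge0$. Because a convex combination of points of $sB_E$ lies in $sB_E$,
\[
M'\cap sB_E\supseteq \sum_i\beta_i\, h_i\bigl(C'\cap sB_E\bigr).
\]
Each $h_i(C'\cap sB_E)$ is an isometric copy of $C'\cap sB_E$ inside $E$. Brunn--Minkowski in the $m$-dimensional space $E$ then gives
\[
\mathcal H^m(M'\cap sB_E)\ge\Bigl(\sum_i\beta_i\,\mathcal H^m(C'\cap sB_E)^{1/m}\Bigr)^m=\mathcal H^m(C'\cap sB_E).
\]
Combining this with Step 1 and monotonicity in $s$ yields, for every $R>0$,
\[
F_M(R):=\mathcal H^m(M\cap RB^n)\ \ge\ F_C(R):=\mathcal H^m(C\cap RB^n).
\]

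\textbf{Step 3: layer cake and equality.} Since $q<n$, the weight $r\mapsto r^{q-n}$ is strictly decreasing. The layer-cake formula gives
\[
\int_M|x|^{q-n}\,d\mathcal H^m=\int_0^\infty \mathcal H^m\bigl(\{x\in M:|x|^{q-n}>t\}\bigr)\,dt=\int_0^\infty F_M\bigl(t^{-1/(n-q)}\bigr)\,dt,
\]
possibly after replacing $\{|x|<R\}$ by $\{|x|\le R\}$. This replacement is harmless, since spheres meet an $m$-dimensional convex set in an $\mathcal H^m$-null set, or everything is null when $\dim C<m$. The same formula holds for $C$, so (3.1) follows from Step 2.

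If both integrals are finite and equal, then $\int_0^\infty\bigl(F_M-F_C\bigr)(t^{-1/(n-q)})\,dt=0$ with a nonnegative integrand. Hence $F_M=F_C$ almost everywhere. Both functions are continuous in $R$ by the null-boundary remark, so they agree everywhere. Conversely, $F_M\equiv F_C$ gives equality of the integrals directly from the same formula.

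I expect the main care to be needed in Step 1. One must use the $G$-invariance of $E^\perp$ to see that all $C_i$ lie in parallel translates of the same $E$, and that the center $b$ of $M$ moves toward the origin. The latter is what makes the radius comparison $s_b\ge s_a$ work. The remaining degenerate cases, namely $\dim C<m$ or $R<|a|$, are trivial.
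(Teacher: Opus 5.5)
Your proposal is correct and follows the paper's own argument: Brunn--Minkowski in $E$ applied to the isometric copies $h_i(C\cap RB^n)$ gives $\mathcal H^m(M\cap RB^n)\ge \mathcal H^m(C\cap RB^n)$ for every $R>0$, and the layer-cake formula then yields the integral inequality. The difference is only cosmetic. The paper obtains $\sum_i\beta_i(C_i\cap RB^n)\subset M\cap RB^n$ directly from the convexity of $RB^n$, so your splitting $M=b+M'$ and the radius comparison via $|b|\le|a|$ are not needed. Your treatment of open versus closed balls, and of the passage from almost every $R$ to every $R$ in the equality case, is slightly more careful than the paper's.
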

	
	\begin{proof}
		For $R>0$, define
	\begin{equation}\label{Di}
		D_i(R):=C_i\cap RB^n,\qquad i=0,\dots,N.
	\end{equation}
		Since both $C_i$ and $RB^n$ are convex, we have
		\begin{equation}\label{Minclud}
		M\cap RB^n \supset \sum_{i=0}^N \beta_i D_i(R).
		\end{equation}
	Since $D_i(R)\subset h_i a+E$ for $i=0,\dots,N$, we have $D_i(R)-h_i a\subset E$, and thus, by the translation invariance of $\mathcal H^m$ and the Brunn--Minkowski inequality in $E$,
\begin{equation}\label{eq:BM-DiR}
	\begin{aligned}
		\mathcal H^m\!\left(\sum_{i=0}^N \beta_i D_i(R)\right)^{1/m}
		&=
		\mathcal H^m\!\left(\sum_{i=0}^N \beta_i\bigl(D_i(R)-h_i a\bigr)\right)^{1/m} \\
		&\ge
		\sum_{i=0}^N \beta_i\,\mathcal H^m\!\left(D_i(R)-h_i a\right)^{1/m} \\
		&=
		\sum_{i=0}^N \beta_i\,\mathcal H^m\!\left(D_i(R)\right)^{1/m}.
	\end{aligned}
\end{equation}
		On the other hand, each $h_i$ is orthogonal  so
		\[
		D_i(R)=h_i\bigl(C\cap RB^n\bigr),
		\]
		and hence
	\begin{equation}\label{DC}
		\mathcal H^m\bigl(D_i(R)\bigr)=\mathcal H^m\bigl(C\cap RB^n\bigr)
		\qquad\text{for all }i=0,\dots,N.
		\end{equation}
		Therefore, by \eqref{Minclud}, \eqref{eq:BM-DiR}, and \eqref{DC},
		\[
		\mathcal H^m(M\cap RB^n)\ge \mathcal H^m\!\left(\sum_{i=0}^N \beta_i D_i(R)\right)\ge \mathcal H^m\bigl(C\cap RB^n\bigr)
		\qquad\text{for every }R>0.
		\]
	By the layer-cake representation, we obtain
		\begin{equation}\label{eq:fubini-comparison}
			\begin{aligned}
				\int_M |x|^{\,q-n}\,d\mathcal H^m(x)
				&=
				\int_0^\infty \mathcal H^m\bigl(\{x\in M:\ |x|^{\,q-n}>R\}\bigr)\,dR \\
				&=\int_0^\infty \mathcal H^m(M\cap R^{\frac{1}{q-n}}B^n)\,dR \\
				&=
				(n-q)\int_0^\infty R^{\,q-n-1}\,\mathcal H^m(M\cap RB^n)\,dR \\
				&\ge
				(n-q)\int_0^\infty R^{\,q-n-1}\,\mathcal H^m(C\cap RB^n)\,dR \\
				&=
				\int_C |x|^{\,q-n}\,d\mathcal H^m(x).
			\end{aligned}
		\end{equation}
	If equality holds in \eqref{eq:fubini-comparison},	we conclude that
		\[
		\mathcal H^m(M\cap RB^n)=\mathcal H^m(C\cap RB^n)
		\qquad \text{for every } R>0.
		\]
			This completes the proof.
		\end{proof}
		
		\begin{remark}\label{remq=n}
				If $q=n$, we choose $R>0$ so large that both $C$ and $M$ are contained in $RB^n$. Then the same ball-section comparison gives
			\[
			\mathcal H^m(M)=\mathcal H^m(M\cap RB^n)\ge \mathcal H^m(C\cap RB^n)=\mathcal H^m(C).
			\]
		\end{remark}
		
In the origin-symmetric setting, dual curvature measures satisfy the $q$-th subspace mass inequality for $0<q<n$; see \cite{BHP2018}. We now consider the corresponding behavior in the $G$-invariant framework.
	
	\begin{theorem}\label{thm:G-invariant-subspace-mass}
Let $G\subset O(n)$ be a closed subgroup with no nonzero fixed points.	Let $K\in \KG$  and let $q\in(0,n]$. If $L\subset \mathbb R^n$ is a proper $G$-invariant subspace, then
		\begin{equation}\label{subspace mass}
		\frac{\widetilde C_q(K,S^{n-1}\cap L)}{\widetilde C_q(K,S^{n-1})}
		\le
		\min\left\{\frac{\dim L}{q},1\right\}.
		\end{equation}
		Moreover, if $q<n$, then the inequality is strict.
	\end{theorem}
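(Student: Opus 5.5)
The plan is to bound $\widetilde C_q(K,S^{n-1}\cap L)$ through the integral representation \eqref{Cq representation}, disintegrated along the affine planes parallel to $L$. The group enters only through Lemma~\ref{lem:parallel-sections}.

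\textbf{Setup.} Put $m=\dim L$, so that $1\le m\le n-1$. Since $G\subset O(n)$ and $L$ is $G$-invariant, $L^\perp$ is $G$-invariant as well. The trivial bound $\widetilde C_q(K,S^{n-1}\cap L)\le \widetilde C_q(K,S^{n-1})$ gives the part of \eqref{subspace mass} with right-hand side $1$. For $q<n$ the inequality $\le 1$ is strict, and this is easy: the radial points of $K$ whose normals avoid $L$ form a set of positive $\mathcal H^{n-1}$-measure on $S^{n-1}$. Otherwise $\partial K$ would have almost all of its normals in $L$, and then $K$ would be a cylinder over $K|L$ with unbounded $L^\perp$-directions, which is impossible. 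So the substance is the bound $m/q$, i.e.
\[
n\,\widetilde C_q(K,S^{n-1}\cap L)\le m\int_K|x|^{q-n}\,dx .
\]

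\textbf{Step 1: central sections dominate parallel sections.} Let $a\in K|L^\perp$ and $C=K\cap(a+L)$. Because $\mathrm{Fix}(G)=\{0\}$, averaging over the (closed, hence compact) group with Haar measure sends $a$ to a $G$-fixed point of $L^\perp$, which must be $0$. Hence $0\in\overline{\conv}(Ga)$, and by Carathéodory, $0=\sum_i\beta_i h_i a$ for some $h_i\in G$ and $\beta_i\ge0$ with $\sum_i\beta_i=1$. Then $M=\sum_i\beta_i h_iC$ lies in $K\cap L$ by convexity and $G$-invariance of $K$, and it lies in $L$ by construction. Lemma~\ref{lem:parallel-sections} then gives
\[
\int_{K\cap(a+L)}|x|^{q-n}\,d\mathcal H^m \le \int_{K\cap L}|x|^{q-n}\,d\mathcal H^m .
\]
This replaces the origin-symmetry used by B\"or\"oczky--Henk--Pollehn. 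The same argument, via Remark~\ref{remq=n}, yields $\mathcal H^m(K\cap(a+L))\le\mathcal H^m(K\cap L)$ when $q=n$.

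\textbf{Step 2: variation under the linear maps $T_t=e^tP_L+P_{L^\perp}$.} Set $\Phi(t)=\int_{T_tK}|x|^{q-n}\,dx$. The change of variables $x=T_ty$ gives
\[
\Phi(t)=e^{tm}\int_K|T_ty|^{q-n}\,dy .
\]
Alternatively, $\Phi'(0)$ is the boundary integral $\int_{\partial K}|x|^{q-n}\langle P_Lx,\nu_K\rangle\,d\mathcal H^{n-1}$. I then split $\partial K$ according to whether $\nu_K\in L$ or not.
\begin{itemize}
\item On the part with $\nu_K\in L$ we have $\langle P_Lx,\nu\rangle=\langle x,\nu\rangle$. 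By \eqref{qth-dual-curvature-measure} and polar coordinates, this part equals $n\,\widetilde C_q(K,S^{n-1}\cap L)$ up to the normalisation in \eqref{Cq representation}.
\item The remainder must be shown to be at least $-(n-q)\int_K|y|^{q-n-2}|P_Ly|^2\,dy$, which is exactly the term produced by differentiating the first expression for $\Phi$.
\end{itemize}

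\textbf{Main obstacle.} Controlling that remainder is where I expect the real difficulty. My approach is to compute $\Phi'(0)$ slice by slice, writing $\Phi(t)=\int_{L^\perp}\psi_t(y)\,dy$ with $\psi_t(y)$ the weighted $m$-dimensional integral over $T_tK\cap(y+L)$. The boundary of each slice inside $y+L$ consists precisely of points whose normals have nonzero $L$-component. Step 1 says that $y\mapsto\psi_0(y)$ is maximised at $y=0$, and I would use this to show that the contribution of normals not lying in $L$ has the correct sign. In effect this replaces the evenness argument of \cite{BHP2018}.

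\textbf{Strictness and $q=n$.} For $q<n$, strictness of the $m/q$ bound would come from the equality clause of Lemma~\ref{lem:parallel-sections}. Equality would force $\mathcal H^m(K\cap(a+L)\cap RB^n)=\mathcal H^m(K\cap L\cap RB^n)$ for all $R$ and a.e.\ $a$. This is impossible, since $K\cap L\cap RB^n$ captures mass near the origin that off-center slices lack for small $R$. For $q=n$ the same scheme, using only volumes of sections, gives the non-strict bound $m/n$.
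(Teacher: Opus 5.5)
There is a genuine gap, and it sits exactly where you flag the ``main obstacle''. Step 2 does not reduce the problem at all. By the divergence theorem, your two expressions for $\Phi'(0)$ coincide for every $K\in\mathcal K_o^n$, so
\[
n\,\widetilde C_q(K,S^{n-1}\cap L)+R=m\int_K|y|^{q-n}\,dy-(n-q)\int_K|y|^{q-n-2}|P_Ly|^2\,dy ,
\]
where $R$ is the boundary integral over $\{\nu_K\notin L\}$. The bound $R\ge-(n-q)\int_K|y|^{q-n-2}|P_Ly|^2\,dy$ that you still need is therefore literally equivalent to $n\widetilde C_q(K,S^{n-1}\cap L)\le m\int_K|x|^{q-n}\,dx$. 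The whole theorem has been moved into the remainder, and the proposal gives no argument that bounds it.

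Step 1 is a correct statement, but it concerns the wrong family of sections. It compares the sections $K\cap(a+L)$, $a\in L^\perp$, with the central one. The mass of $\widetilde C_q(K,\cdot)$ on $S^{n-1}\cap L$ lives on the fibres over $\partial(K|L)$, which are sections parallel to $L^\perp$. What is needed is a comparison of those: for $u\in S^{n-1}\cap L$, $y_u=\rho_{K|L}(u)u$ and $A_u=K\cap(y_u+L^\perp)$,
\[
\int_{K\cap(ru+L^\perp)}|z|^{q-n}\,d\mathcal H^{n-m}(z)\ \ge\ \int_{A_u}|z|^{q-n}\,d\mathcal H^{n-m}(z)\qquad(0\le r\le\rho_{K|L}(u)).
\]
Even a ``central section is maximal'' statement in the $L^\perp$ direction would not by itself give this for $q<n$. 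No Brunn-type concavity of the weighted section function is available to turn a maximum at the centre into monotonicity out to the boundary fibre.

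The missing idea is to apply the group averaging to $y_u\in L$ rather than to $a\in L^\perp$. Since $\mathrm{Fix}(G)=\{0\}$, we have $0=\sum_i\lambda_ig_iy_u$, so $B_u=\sum_i\lambda_ig_iA_u\subset K\cap L^\perp$. With $t=r/\rho_{K|L}(u)$, the set $tA_u+(1-t)B_u$ lies in $K\cap(ru+L^\perp)$ and is a convex combination of $G$-images of $A_u$ sitting in translates of $L^\perp$. Lemma~\ref{lem:parallel-sections} with $E=L^\perp$ then gives the display above.

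Next, the fibre identity (Lemma~\ref{lem:fiber-identity}) shows that the points whose radial boundary point has a normal in $L$ form $\bigcup_u\conv\{0,A_u\}$. Its slice over $ru$ is $(r/\rho_{K|L}(u))A_u$, which scales the weighted integral by $(r/\rho_{K|L}(u))^{q-m}$. Write $I(u)=\int_{A_u}|z|^{q-n}$. Polar coordinates in $L$ give $n\widetilde C_q(K,S^{n-1}\cap L)=\int_{S^{n-1}\cap L}I(u)\rho_{K|L}(u)^m\,du$ and $\widetilde C_q(K,S^{n-1})\ge\frac{q}{nm}\int_{S^{n-1}\cap L}I(u)\rho_{K|L}(u)^m\,du$. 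Their ratio is the bound $m/q$, with no variational argument needed.

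Your strictness argument inherits the same gap. Step 1 never enters an actual proof chain, so equality in the theorem does not force equality there. In the correct argument, equality forces $\mathcal H^{n-m}(A_u\cap RB^n)=\mathcal H^{n-m}(K\cap(ru+L^\perp)\cap RB^n)$. This fails for $r<R<\min\{\rho_{K|L}(u),\delta\}$ with $\delta B^n\subset K$, because the left side is then zero and the right side is positive.
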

	
	\begin{proof}
		Write $k=\dim L$ and $m=n-k$. If $k\ge q$, then the desired estimate is immediate, because the right-hand side is equal to $1$. Hence it remains to treat the nontrivial case $0<k<q$.
		
		For $u\in \SL$, define
		\begin{equation}\label{symbol}
		\rho(u):=\rho_{K|L}(u),\qquad y_u:=\rho(u)u\in \partial(K|L),\qquad A_u:=K\cap (y_u+\lp).
		\end{equation}
		
		We first estimate the total mass. By the integral representation \eqref{Cq representation} of $\widetilde C_q(K,\cdot)$, we have
		\[
		\widetilde C_q(K,S^{n-1})=\frac{q}{n}\int_K |x|^{\,q-n}\,d\mathcal H^n(x).
		\]
		Fubini's theorem and polar coordinates in $L$ yield
		\begin{equation}\label{eq:total-mass-decomposition}
			\begin{aligned}
				\widetilde C_q(K,S^{n-1})
				&=
				\frac{q}{n}\int_{K|L}
				\left(
				\int_{K\cap(y+L^\perp)} |z|^{\,q-n}\,d\mathcal H^m(z)
				\right)
				\,d\mathcal H^k(y) \\
				&=
				\frac{q}{n}\int_{\SL}\int_0^{\rho(u)}
				\left(
				\int_{K\cap(ru+\lp)} |z|^{\,q-n}\,d\mathcal H^m(z)
				\right)
				r^{k-1}\,dr\,d\mathcal H^{k-1}(u).
			\end{aligned}
		\end{equation}
		
		Fix $u\in \SL$. Since $L$ is $G$-invariant and $y_u\in \partial(K|L)$, we have $Gy_u\subset L$. The centroid of the compact convex set $\operatorname{conv}(Gy_u)$ is $G$-invariant, and hence it is equal to $0$ because $\mathrm{Fix}(G)=\{0\}$. Therefore,
		\[
		0\in \operatorname{conv}(Gy_u).
		\] 
		Then there exist  $g_1,\dots,g_N\in G$ and  $\lambda_1,\dots,\lambda_N\ge 0$ such that
			\begin{equation}\label{centroid}
		\sum_{i=1}^N \lambda_i=1,
		\qquad
		\sum_{i=1}^N \lambda_i g_i y_u=0.
			\end{equation}
	Recall that $A_u=K\cap (y_u+\lp)$.	Define
		\[
		B_u:=\sum_{i=1}^N \lambda_i\,g_i A_u.
		\]
		Since $\lp$ is $G$-invariant by Lemma \ref{lem:orthogonal-complement-invariant},  each $g_iA_u$ is contained in the affine subspace $g_i y_u+\lp$, and therefore any point of $B_u$ has $L$-component
		\[
		\sum_{i=1}^N \lambda_i g_i y_u=0.
		\]
		Since $K$ is convex and $G$-invariant, we conclude that
		\begin{equation}\label{Bu}
		B_u\subset K\cap \lp.
		\end{equation}
	
		Now let $r\in[0,\rho(u)]$ and put $t=r/\rho(u)$. By the convexity of $K$,
		\[
	tA_u+(1-t)B_u\subset K.
		\]
		Moreover, by \eqref{Bu}, every point of $tA_u+(1-t)B_u$ has $L$-component 
		\[
		ty_u+(1-t)\cdot 0=ru,
		\]
		so
		\begin{equation}\label{AK}
		tA_u+(1-t)B_u\subset K\cap(ru+\lp).
		\end{equation}
		
		We now apply Lemma~\ref{lem:parallel-sections}, together with Remark~\ref{remq=n}. Indeed, $A_u\subset y_u+\lp$, $g_iA_u\subset g_iy_u+\lp$.  Taking the coefficients
		\[
		\beta_0=t,\qquad \beta_i=(1-t)\lambda_i,\quad i=1,\dots,N,
		\]
		whose sum is equal to $1$, the lemma yields
		\begin{equation}\label{equatity1}
		\int_{tA_u+(1-t)B_u} |z|^{\,q-n}\,d\mathcal H^m(z)\ge \int_{A_u}|z|^{\,q-n}\,d\mathcal H^m(z)
		\end{equation}
	for every $u\in \SL$ and every $r\in[0,\rho(u)]$. Hence, by \eqref{AK},
			\begin{equation}\label{equatity2}
		\int_{K\cap(ru+\lp)} |z|^{\,q-n}\,d\mathcal H^m(z)\ge \int_{A_u}|z|^{\,q-n}\,d\mathcal H^m(z)
		\end{equation}
		for every $u\in \SL$ and every $r\in[0,\rho(u)]$.
	Substituting this estimate into \eqref{eq:total-mass-decomposition}, we obtain
	\begin{equation}\label{greatthan}
		\begin{aligned}
			\widetilde C_q(K,S^{n-1})
			&\ge
			\frac{q}{n}\int_{\SL}\int_0^{\rho(u)} \left( \int_{A_u}|z|^{\,q-n}\,d\mathcal H^m(z)\right) \,r^{k-1}\,dr\,d\mathcal H^{k-1}(u) \\
			&=
			\frac{q}{nk}\int_{\SL} \left( \int_{A_u}|z|^{\,q-n}\,d\mathcal H^m(z)\right) \rho(u)^k\,d\mathcal H^{k-1}(u).
		\end{aligned}
	\end{equation}
	
		We next compute the restriction of $\widetilde C_q(K,\cdot)$ to $\SL$. For $x\in K\setminus\{0\}$, we have $x/|x|\in \alpha_K^*(\SL)$ if and only if $\rho_{K}(x)x$ has an outer normal in $L$. By Lemma \ref{lem:fiber-identity},
		\begin{equation}\label{eq:alpha-star-decomposition}
			\begin{aligned}
				\{x\in K\setminus\{0\}: x/|x|\in \alpha_K^*(\SL)\}\cup\{0\}
				&=
				\bigcup_{u\in\SL}\operatorname{conv}\{0,A_u\} \\
				&=
				\bigcup_{u\in\SL}\bigcup_{0\le r\le \rho(u)} \frac{r}{\rho(u)}A_u .
			\end{aligned}
		\end{equation}
		Using the  fiber identity \eqref{eq:fiber-section},  \eqref{Cq representation},  Fubini's theorem, and polar coordinates in the $k$-dimensional space $L$, we obtain
		\begin{align*}
			\widetilde C_q(K,\SL)
			&=
			\frac{q}{n}\int_{	\{x\in K\setminus\{0\}: x/|x|\in \alpha_K^*(\SL)\}}|x|^{\,q-n}\,d\mathcal H^n(x) \\
			&=	\frac{q}{n}\int_{K|L}\int_{	\{x\in K\setminus\{0\}: x/|x|\in \alpha_K^*(\SL)\}\cap( y+\lp) }|z|^{\,q-n}\,d\mathcal H^m(z) d\mathcal H^k(y) \\
			&=
			\frac{q}{n}\int_{\SL}\int_0^{\rho(u)}
			\left(
			\int_{(r/\rho(u))A_u} |z|^{\,q-n}\,d\mathcal H^m(z)
			\right)
			r^{k-1}\,dr\,d\mathcal H^{k-1}(u)\\
			&=
			\frac{q}{n}\int_{\SL}\int_0^{\rho(u)}
			\left(\frac{r}{\rho(u)}\right)^{q-k} 	\left( \int_{A_u} |z|^{\,q-n}\,d\mathcal H^m(z)\right) \,r^{k-1}\,dr\,d\mathcal H^{k-1}(u) \\
			&=\frac{1}{n}\int_{\SL} \left( \int_{A_u} |z|^{\,q-n}\,d\mathcal H^m(z)\right)\rho(u)^k\,d\mathcal H^{k-1}(u).
		\end{align*}

		Comparing this with the formula \eqref{greatthan}, we conclude that
		\[
		\widetilde C_q(K,\SL)\le \frac{k}{q}\,\widetilde C_q(K,S^{n-1}),
		\]
		which proves the asserted inequality in the nontrivial range $k<q$.

		Assume that equality holds in \eqref{subspace mass} for some proper $G$-invariant subspace $L$ and $q<n$. Since $\widetilde C_q(K,\cdot)$ is not concentrated on any proper subsphere, we must have $\dim L<q$. Hence equality must hold in \eqref{greatthan}. Therefore, equality holds in \eqref{equatity2} for $\mathcal H^{k-1}$-almost every $u\in \SL$ and for almost every $r\in(0,\rho(u))$.
		
		For such $u$ and $r$, by \eqref{AK}, \eqref{equatity1}, \eqref{equatity2}, and the equality case in Lemma~\ref{lem:parallel-sections}, we obtain
		\[
		\mathcal H^m\bigl(A_u\cap RB^n\bigr)
		=
		\mathcal H^m\bigl((K\cap(ru+L^\perp))\cap RB^n\bigr)
		\qquad\text{for every }R>0.
		\]
		Since $0\in \operatorname{int}K$, there exists $\delta>0$ such that
		$\delta B^n\subset K$. Now fix such a pair $(u,r)$ satisfying
		\[
		0<r<\min\{\rho(u),\delta\}.
		\] Since $A_u\subset y_u+L^\perp$ and $|y_u|=\rho(u)$, we have
		\[
		A_u\cap RB^n=\varnothing
		\qquad\text{whenever }0<R<\rho(u).
		\]
	 Choosing $R$ such that
		\[
		0<r<R<\min\{\rho(u),\delta\},
		\]
		we see that
		\[
		(ru+L^\perp)\cap RB^n
		\]
		has positive $m$-dimensional volume and is contained in $K\cap(ru+L^\perp)$. Hence
		\[
		\mathcal H^m\bigl((K\cap(ru+L^\perp))\cap RB^n\bigr)>0,
		\]
		which is impossible. This contradiction proves that the inequality is strict when $q<n$. 
		

	\end{proof}
	
	We call the mass distribution property appearing in Theorem \ref{thm:G-invariant-subspace-mass} the $G$-invariant $q$-th subspace mass inequality. More precisely, we make the following definition.
		\begin{definition}
		Let $q>0$, let $G\subset O(n)$, and let $\mu$ be a nonzero finite Borel measure on $\Sph$. We say that $\mu$ satisfies the \textit{$G$-invariant $q$-th subspace mass inequality} if
		\begin{equation}\label{G qth subspace mass}
			\frac{\mu(\Sph\cap L)}{\mu(\Sph)}
			<
			\begin{cases}
				\dfrac{\dim L}{q}, & \dim L<q,\\[1ex]
				1, & \dim L\ge q,
			\end{cases}
		\end{equation}
		for every proper $G$-invariant subspace $L\subset \R^n$.
	\end{definition}
	The $G$-invariant $q$-th subspace mass inequality only tests the mass of the measure on $G$-invariant subspaces. Note that there may be very few such subspaces. For example, if $G$ is irreducible, then there are no nontrivial $G$-invariant subspaces. Therefore, \eqref{G qth subspace mass} carries no information in this case. Furthermore, let
	\[
	\R^n=V_1\oplus\cdots\oplus V_m
	\]
	be an orthogonal decomposition, and let
	\begin{equation}\label{product group}
		G=G_1\times\cdots\times G_m\subset O(V_1)\times\cdots\times O(V_m)\subset O(n),
	\end{equation}
	where each $G_j$ is irreducible on $V_j$ (when $\dim V_j=1$ we assume $G_j\neq \{I_{V_j}\}$). Then the $G$-invariant subspaces of $\mathbb R^n$ are precisely
	$
	\bigoplus_{j\in I}V_j
	$
	for some subset $I\subset\{1,\dots,m\}$. At the other extreme, when $G=\{\pm I\}$, every subspace of $\mathbb R^n$ is $\{\pm I\}$-invariant. Hence the $\{\pm I\}$-invariant $q$-th subspace mass inequality is exactly the classical $q$-th subspace mass inequality.

	Next, we consider a special class of symmetries, namely irreducible ones. This type of symmetry is highly rigid (see \cite{shan2026}).
	
	\begin{proposition}\label{irreducible-subspace-mass}
		Let $G\subset O(n)$ be an irreducible subgroup, and let $\mu$ be a nonzero finite
		$G$-invariant Borel measure on $\Sph$. Then, for every proper subspace
		$\xi\subset \R^n$,
		\[
		\frac{\mu( \Sph \cap \xi)}{\mu(\Sph)}
		\le
		\frac{\dim \xi}{n}.
		\]
		Moreover, equality holds if and only if
		\[
		\mu\bigl(\Sph\setminus(\xi\cup \xi^\perp)\bigr)=0.
		\]
	\end{proposition}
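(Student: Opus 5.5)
Let $\xi$ be a proper subspace with $k=\dim\xi$, and let $P=P_\xi$ be the orthogonal projection onto $\xi$. The plan is to average the quadratic form $u\mapsto |Pu|^2$ over the measure and over the group. First, irreducibility lets us replace $G$ by its closure $\overline G$ in $O(n)$. The closure is compact and still irreducible, since a subspace invariant under $G$ is invariant under its closure. The measure $\mu$ is also $\overline G$-invariant: for $f$ continuous, $g\mapsto\int f(gu)\,d\mu(u)$ is continuous and constant on $G$, hence constant on $\overline G$, and a Borel measure is determined by integrals of continuous functions. Define the symmetric positive semidefinite operator
\[
T:=\int_{\Sph} u\otimes u\,d\mu(u),\qquad \ip{Tx}{x}=\int_{\Sph}\ip{x}{u}^2\,d\mu(u).
\]
By $\overline G$-invariance of $\mu$ we get $gTg^{-1}=T$ for all $g\in\overline G$. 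So each eigenspace of $T$ is $\overline G$-invariant, and by irreducibility $T$ has a single eigenvalue, i.e. $T=cI$. Taking the trace gives $nc=\mu(\Sph)$, so $T=\frac{\mu(\Sph)}{n}I$. (Only the commutation with $G$ is really needed; the closure is just a safeguard for measurability.)

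Now compute the trace of $PT$ in two ways. On one hand,
\[
\operatorname{tr}(PT)=\frac{\mu(\Sph)}{n}\operatorname{tr}P=\frac{k}{n}\mu(\Sph).
\]
On the other hand, $\operatorname{tr}(PT)=\int_{\Sph}|Pu|^2\,d\mu(u)$. Since $0\le |Pu|^2\le1$, with $|Pu|^2=1$ exactly when $u\in\xi$, this gives
\[
\int_{\Sph}|Pu|^2\,d\mu\ \ge\ \mu(\Sph\cap\xi).
\]
Hence $\mu(\Sph\cap\xi)\le\frac kn\mu(\Sph)$.

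For the equality case, equality forces $\int_{\Sph\setminus\xi}|Pu|^2\,d\mu=0$. So $Pu=0$, i.e. $u\in\xi^\perp$, for $\mu$-a.e. $u\notin\xi$, which gives $\mu(\Sph\setminus(\xi\cup\xi^\perp))=0$. Conversely, if $\mu$ is concentrated on $\xi\cup\xi^\perp$, then $|Pu|^2$ equals the indicator of $\xi$ $\mu$-a.e., and the trace identity gives equality.

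The only step requiring care is the identity $T=\frac{\mu(\Sph)}{n}I$. It uses Schur's lemma in its simplest real symmetric form: the eigenspaces of a symmetric operator commuting with $G$ are $G$-invariant. This avoids any subtlety about real versus complex irreducibility. The rest of the argument is the elementary trace computation above.
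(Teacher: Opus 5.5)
Your proof is correct and follows essentially the same route as the paper: the second-moment operator $\int_{\Sph} u\otimes u\,d\mu(u)$ commutes with $G$, irreducibility forces it to equal $\frac{\mu(\Sph)}{n}I_n$, and the trace identity $\int_{\Sph}|P_\xi u|^2\,d\mu(u)=\frac{\dim\xi}{n}\mu(\Sph)\ge\mu(\Sph\cap\xi)$ gives both the inequality and the equality characterization. Your passage to $\overline G$ is harmless but unnecessary, since the commutation argument only uses the $G$-invariance of $\mu$, as you yourself note.
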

	
	\begin{proof}
		Define
		\begin{equation}\label{defm}
			M:=\int_{\Sph} x\otimes x\,d\mu(x),
		\end{equation}
		where $(x\otimes x)v=\langle x,v\rangle x$ for $v\in\R^n$. For every
		$g\in G$ and every $v\in\R^n$,
		\[
		(g(x\otimes x)g^{-1})v
		=
		g\bigl((x\otimes x)(g^{-1}v)\bigr)
		=
		g\bigl(\langle x,g^{-1}v\rangle x\bigr)
		=
		\langle gx,v\rangle gx
		=
		\bigl((gx)\otimes(gx)\bigr)v.
		\]
		Hence, by the $G$-invariance of $\mu$,
		\[
		gMg^{-1}
		=
		\int_{\Sph} (gx)\otimes(gx)\,d\mu(x)
		=
		\int_{\Sph} y\otimes y\,d\mu(y)
		=
		M.
		\]
		In other words,
		\begin{equation}\label{commute}
			gM=Mg
			\qquad\text{for every }g\in G.
		\end{equation}
		
		Since $M$ is symmetric, all its eigenvalues are real. Let $\lambda$ be an
		eigenvalue of $M$, and let
		\[
		V_\lambda=\{v\in\R^n:Mv=\lambda v\}
		\]
		be the corresponding eigenspace. For any $v\in V_\lambda$ and any $g\in G$,
		by \eqref{commute} we have
		\[
		M(gv)=g(Mv)=g(\lambda v)=\lambda(gv).
		\]
		Thus $gv\in V_\lambda$, and so $V_\lambda$ is $G$-invariant. By the
		irreducibility of $G$, we have $V_\lambda=\R^n$. Therefore $M=\lambda I_n$.
		Taking traces gives
		\[
		n\lambda=\operatorname{tr}(M)
		=
		\sum_{i=1}^n\int_{\Sph}x_i^2\,d\mu(x)
		=
		\int_{\Sph}|x|^2\,d\mu(x)
		=
		\mu(\Sph),
		\]
		since $|x|=1$ on $\Sph$. Therefore
		\begin{equation}\label{identityM}
			M=\frac{\mu(\Sph)}{n}\,I_n.
		\end{equation}
		
	Now let $P_\xi$ be the orthogonal projection onto $\xi$. Let $k=\dim\xi$.
	Choose an orthonormal basis $e_1,\dots,e_k$ of $\xi$, and extend it to an
	orthonormal basis $e_1,\dots,e_n$ of $\R^n$. With respect to this basis, the matrix
	of $P_\xi$ is
	\[
	P_\xi=
	\begin{pmatrix}
		I_k & 0\\
		0 & 0
	\end{pmatrix}.
	\]
	For $x\in\Sph$, write
	\[
	x=\sum_{i=1}^n x_i e_i .
	\]
	Then $P_\xi x=\sum_{i=1}^k x_i e_i$, and hence
	\begin{equation}\label{eq:projection-trace}
		\int_{\Sph}|P_\xi x|^2\,d\mu(x)
		=
		\int_{\Sph}\sum_{i=1}^k x_i^2\,d\mu(x)
		=
		\operatorname{tr}(P_\xi M).
	\end{equation}
	Using \eqref{identityM}, we obtain
	\begin{equation}\label{projection}
		\int_{\Sph}|P_\xi x|^2\,d\mu(x)
		=
		\frac{\mu(\Sph)}{n}\operatorname{tr}(P_\xi)
		=
		\frac{\dim\xi}{n}\,\mu(\Sph),
	\end{equation}
	since $\operatorname{tr}(P_\xi)=\dim\xi$.
		
		On the other hand, if $x\in\xi\cap\Sph$, then $|P_\xi x|=1$. Therefore,
		\begin{equation}\label{proine}
			\mu(\xi\cap\Sph)
			=
			\int_{\xi\cap\Sph}1\,d\mu(x)
			=
			\int_{\xi\cap\Sph}|P_\xi x|^2\,d\mu(x)
			\le
			\int_{\Sph}|P_\xi x|^2\,d\mu(x).
		\end{equation}
		Combining this with \eqref{projection} yields
		\[
		\frac{\mu(\xi\cap\Sph)}{\mu(\Sph)}
		\le
		\frac{\dim\xi}{n}.
		\]
		
		It remains to characterize the equality case. By \eqref{proine}, equality holds
		if and only if
		\[
		\int_{\Sph\setminus\xi}|P_\xi x|^2\,d\mu(x)=0.
		\]
		Since the integrand is nonnegative, this is equivalent to
		\[
		|P_\xi x|^2=0
		\qquad\text{for $\mu$-a.e. }x\in\Sph\setminus\xi.
		\]
		Since $|P_\xi x|=0$ if and only if $x\in\xi^\perp$, equality holds if and only if
		\[
		\mu\bigl(\Sph\setminus(\xi\cup\xi^\perp)\bigr)=0.
		\]
		This completes the proof.
	\end{proof}
	
Proposition~\ref{irreducible-subspace-mass} shows that irreducible symmetry is strong enough to force the classical subspace concentration condition. More precisely, every nonzero finite Borel measure on $\Sph$ invariant under an irreducible subgroup of $O(n)$ automatically satisfies this condition. Moreover,  if equality holds for some proper subspace $\xi$, then the measure is concentrated on $\Sph \cap (\xi\cup\xi^\perp)$.

\begin{remark}\label{rem:irreducible-vs-fixed}
		For $n\ge 2$, every irreducible subgroup of $O(n)$ has no nonzero fixed points.
The condition of having no nonzero fixed points is substantially weaker than
irreducibility. This gap becomes more visible in higher dimensions. 

First, for any $G\subset O(n)$, the space $\R^n$ can be
	decomposed orthogonally into $G$-invariant irreducible blocks,
	\[
	\R^n=U_1\oplus\cdots\oplus U_m .
	\]
The group $G$ is irreducible  precisely when $m=1$, whereas the condition $\Fix(G)=\{0\}$ only means that no  one-dimensional block
	$U_i$ is fixed pointwise by all elements of $G$. 
	
	This difference can also be seen from the orbit criterion. By \cite{shan2026},
	$G$ is irreducible if and only if, for every $v\in S^{n-1}$, the orbit $Gv$ is not
	contained in any closed hemisphere of $S^{n-1}$. In contrast, $G$ has no nonzero
	fixed points if and only if $Gv$ is not a singleton for every $v\in S^{n-1}$.
	
	The same distinction appears in the structure of John ellipsoids. By  \cite{shan2026}, if $G$ is irreducible and $K\in\KG$, then the John ellipsoid of $K$ is a Euclidean ball centered at the origin. Under the weaker assumption that $G$ has no nonzero fixed points, the John ellipsoid is only an ellipsoid centered at the origin; see Section~4.
	
	When $G=\{\pm I\}$, one has $\Fix(G)=\{0\}$, and every subspace of $\R^n$ is
	$G$-invariant. We next record several further examples.
	
Product groups provide many such examples. Let
\[
\R^n=V_1\oplus\cdots\oplus V_m,\qquad m\ge 2,
\]
be an orthogonal decomposition into nonzero subspaces, and suppose that $G_i\subset O(V_i)$ satisfies $\Fix(G_i)=\{0\}$ for each $i$. Then the
 product
\[
G=G_1\times\cdots\times G_m\subset O(n)
\]
satisfies
\[
\Fix(G)=\Fix(G_1)\oplus\cdots\oplus\Fix(G_m)=\{0\},
\]
but $G$ is reducible since each $V_i$ is $G$-invariant.
	
When the group acts diagonally on repeated blocks rather than independently on
each block, additional invariant subspaces may appear. For example, write
	\[
	\R^4=\R^2\oplus\R^2,
	\]
	and let $G\subset SO(4)$ be the diagonal rotation group
	\[
	G=\left\{
	\begin{pmatrix}
		R_\theta & 0\\
		0 & R_\theta
	\end{pmatrix}
	:\theta\in\R
	\right\},
	\]
	where $R_\theta$ denotes the rotation of $\R^2$ by angle $\theta$. Then
	$\Fix(G)=\{0\}$, but the action is reducible, since
	$\R^2\oplus\{0\}$ and $\{0\}\oplus\R^2$ are $G$-invariant. Moreover, there are
	continuously many two-dimensional $G$-invariant subspaces. Indeed, for
	$a,b\in\R$, put
	\[
	A_{a,b}=
	\begin{pmatrix}
		a & -b\\
		b & a
	\end{pmatrix},
	\qquad
	L_{a,b}=\{(u,A_{a,b}u):u\in\R^2\}.
	\]
	Since $A_{a,b}R_\theta=R_\theta A_{a,b}$ for every $\theta$, each $L_{a,b}$ is
	$G$-invariant. 
	
	There are also simple examples in $SO(3)$. Let
	\[
	G=\{I,\operatorname{diag}(1,-1,-1),
	\operatorname{diag}(-1,1,-1),
	\operatorname{diag}(-1,-1,1)\}\subset SO(3).
	\]
	This is the Klein four group generated by rotations of angle $\pi$ about the
	coordinate axes. It has no nonzero fixed points, since a vector
	$x=(x_1,x_2,x_3)$ fixed by all elements of $G$ must satisfy
	$x_1=x_2=x_3=0$. Nevertheless, the action is highly reducible: each coordinate
	line $\R e_i$ is $G$-invariant.  
	
More generally,	 for any
	$n\ge 2$, let
	\[
	G=\left\{
	\operatorname{diag}(\varepsilon_1,\ldots,\varepsilon_n):
	\varepsilon_i=\pm 1,\ 
	\varepsilon_1\cdots\varepsilon_n=1
	\right\}
	\subset SO(n).
	\]
	This group consists of all diagonal sign changes with an even number of minus
	signs. It has no nonzero fixed points.  Nevertheless, the action is highly reducible, since every coordinate line, and
	more generally every coordinate subspace, is $G$-invariant.
	
A more geometric example is the rotational symmetry group of a regular
$m$-gonal prism. Let $m\ge 3$, let $R$ be the rotation about the $e_3$-axis by
angle $2\pi/m$, and let $S$ be the rotation about the $e_1$-axis by angle $\pi$.
The group $G=\langle R,S\rangle\subset SO(3)$ preserves the decomposition
\[
\R^3=\operatorname{span}\{e_1,e_2\}\oplus \R e_3,
\]
and hence is reducible. On the other hand, $\Fix(R)=\R e_3$, while $S$ sends
$e_3$ to $-e_3$. Hence $\Fix(G)=\{0\}$.
	
	If $G\subset O(n)$ is connected and $L=\R v$ is a one-dimensional $G$-invariant subspace
	with $|v|=1$, then the orbit $Gv$ is connected. Since $L$ is $G$-invariant and $G\subset O(n)$, we have $Gv\subset\{v,-v\}$. Hence $Gv=\{v\}$, and
	therefore $L\subset\Fix(G)$. Thus, for connected groups,
	the condition $\Fix(G)=\{0\}$ rules out one-dimensional $G$-invariant subspaces. Consequently, for connected subgroups $G\subset O(2)$, the condition
	$\Fix(G)=\{0\}$ is equivalent to the irreducibility of $G$.

	These examples show that the passage from irreducible groups to groups with
	no nonzero fixed points is substantial. 
\end{remark}

\begin{remark}
It was shown in \cite{shan2026} that $\mathcal K_G \subset \mathcal K_e^n$
if and only if $-x\in \overline{Gx}$ for all $x\in S^{n-1}$. In particular,
if $G$ is finite, then $\mathcal K_G \subset \mathcal K_e^n$ if and only if
$-I\in G$. Hence there are many groups for which $G$-invariant convex bodies
need not be origin-symmetric.
\end{remark}


\section{Structure of $G$-invariant ellipsoids}

	Ellipsoids play an important role in variational approaches to the existence theory of Minkowski-type problems. In this section, we investigate several properties of $G$-invariant ellipsoids.  Throughout this section, let $G\subset O(n)$ be a closed subgroup with no nonzero fixed points.

	We first record the basic facts about  John ellipsoids. Recall that, for a convex body $K\subset \mathbb R^n$, the John ellipsoid of $K$ is the unique ellipsoid of maximal volume contained in $K$.
	
	\begin{lemma}\label{lem:John}
		Let $K\in  \KG$. Then the John ellipsoid $Q$ of $K$ is $G$-invariant and centered at the origin. In particular, there exists a positive definite  operator $A$ commuting with every element of $G$ such that
		\[
		Q=\{x\in \R^n:\ip{Ax}{x}\le 1\}.
		\]
		If $\lambda_1>\cdots>\lambda_m>0$ are the distinct eigenvalues of $A$, and $V_1,\dots,V_m$ are the corresponding eigenspaces, then
		\[
		\R^n=V_1\oplus\cdots\oplus V_m
		\]
		is an orthogonal decomposition into $G$-invariant subspaces, and, setting $b_j=\lambda_j^{-1/2}$, one has
		\[
		Q=\left\{x\in \R^n:\sum_{j=1}^m \frac{\abs{P_{V_j}x}^2}{b_j^2}\le 1\right\}.
		\]
	\end{lemma}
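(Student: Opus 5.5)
The plan is to combine uniqueness of the John ellipsoid with the fact that $G$-invariant convex bodies are centered. Let $Q$ be the John ellipsoid of $K$. For each $g\in G$, the set $gQ$ is an ellipsoid contained in $gK=K$, and it has the same volume as $Q$ because $g$ is orthogonal. By uniqueness of the maximal-volume ellipsoid, $gQ=Q$, so $Q$ is $G$-invariant. Its center $c$ then satisfies $gc=c$ for all $g\in G$, since $g$ maps centers of ellipsoids to centers. Because $\Fix(G)=\{0\}$, this forces $c=0$. Alternatively, one can note that the centroid of the $G$-invariant body $Q$ is fixed, as observed at the beginning of Section~3.

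Next I would write $Q=\{x:\ip{Ax}{x}\le 1\}$ with $A$ symmetric positive definite, which is possible because $Q$ is a nondegenerate ellipsoid centered at $0$. Invariance gives $g^{-1}Q=Q$. Now $x\in g^{-1}Q$ if and only if $\ip{Agx}{gx}\le1$, that is, $\ip{g^TAgx}{x}\le 1$. Positive definite quadratic forms are determined by their unit sublevel sets: the gauge $\sqrt{\ip{Bx}{x}}$ determines $B$ by polarization. Hence $g^TAg=A$, and since $g^T=g^{-1}$, $A$ commutes with $g$.

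Finally, I would apply the spectral theorem to $A$. Its distinct eigenvalues $\lambda_1>\dots>\lambda_m>0$ have mutually orthogonal eigenspaces $V_j$ whose direct sum is $\R^n$. Exactly as in the proof of Proposition~\ref{irreducible-subspace-mass}, commutation gives $A(gv)=gAv=\lambda_j gv$ for $v\in V_j$, so each $V_j$ is $G$-invariant. Writing $x=\sum_j P_{V_j}x$ gives $\ip{Ax}{x}=\sum_j\lambda_j\abs{P_{V_j}x}^2=\sum_j \abs{P_{V_j}x}^2/b_j^2$, which yields the stated form of $Q$.

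No step is a real obstacle. The only point requiring care is the uniqueness of the John ellipsoid, which is classical and is what makes the invariance argument work, together with the determination of $A$ by $Q$.
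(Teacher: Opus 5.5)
Your proposal is correct and follows the paper's proof essentially step by step: uniqueness of the John ellipsoid gives $gQ=Q$, then $\Fix(G)=\{0\}$ forces the center to be the origin, invariance of $Q$ gives $gAg^{-1}=A$, and the spectral theorem yields $G$-invariant eigenspaces and the stated block form of $Q$. Your explicit remark that a positive definite quadratic form is determined by its unit sublevel set (via the gauge and polarization) justifies a step the paper simply asserts when it equates the two descriptions of $gQ=Q$.
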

	
	\begin{proof}
		The John ellipsoid of $K$ is unique. If $g\in G$, then $gQ\subset gK=K$, and $\vol(gQ)=\vol(Q)$ because $g$ is orthogonal. Hence $gQ$ is also the John ellipsoid of $K$, so $gQ=Q$. Thus $Q$ is $G$-invariant.
		
Since $Q$ is $G$-invariant, its center is fixed by every element of $G$. Since $\mathrm{Fix}(G)=\{0\}$, the center of $Q$ must be the origin. Therefore,
\begin{equation}\label{ellipsoid}
Q=\{x\in \mathbb R^n:\langle Ax,x\rangle\le 1\}
\end{equation}
for some symmetric positive definite matrix $A\in \mathrm{GL}(n, \R)$.  For any $g\in G$, since $g^{-1}=g^\top$, we have
$$
gQ
=
\{gx:\langle Ax,x\rangle\le 1\}
=
\{y:\langle A(g^{-1}y),g^{-1}y\rangle\le 1\}
=
\{y:\langle gAg^{-1}y,y\rangle\le 1\}.
$$
	Equating $gQ = Q$ gives $gAg^{-1} = A$. 
	In other words, 
	\begin{equation}\label{commute2}
		Ag = gA  \quad \text{for all} \quad g \in G.
	\end{equation}

	Since $A$ is symmetric positive definite, all its eigenvalues are positive. Let
	\[
	\lambda_1>\cdots>\lambda_m>0
	\]
	be the distinct eigenvalues of $A$, and let
	\begin{equation}\label{eigenspace}
	V_j=\{v\in \mathbb R^n:Av=\lambda_j v\},
	\qquad j=1,\dots,m,
	\end{equation}
	be the corresponding eigenspaces. By the spectral theorem,
	\begin{equation}\label{ellipsoid od}
	\mathbb R^n=V_1\oplus\cdots\oplus V_m
	\end{equation}
	is an orthogonal decomposition.
	 Let $v\in V_j$ and let $g\in G$. By \eqref{commute2},
	\[
	A(gv)=g(Av)=g(\lambda_j v)=\lambda_j(gv).
	\]
	Thus $gv\in V_j$, and so $V_j$ is $G$-invariant.
	
	Finally, for any $x\in \mathbb R^n$, write
	\[
	x=P_{V_1}x+\cdots+P_{V_m}x.
	\]
By \eqref{eigenspace} and \eqref{ellipsoid od} we obtain
	\[
	\langle Ax,x\rangle
	=
	\sum_{j=1}^m \lambda_j |P_{V_j}x|^2.
	\]
	Setting
	\[
	b_j=\lambda_j^{-1/2},
	\qquad j=1,\dots,m,
	\]
	it follows from \eqref{ellipsoid} that
	\[
	Q
	=
	\left\{x\in \mathbb R^n:\sum_{j=1}^m \lambda_j |P_{V_j}x|^2\le 1\right\}
	=
	\left\{x\in \mathbb R^n:\sum_{j=1}^m \frac{|P_{V_j}x|^2}{b_j^2}\le 1\right\}.
	\]
	\end{proof}
	
	\begin{remark}
If $G$ has a nonzero fixed point $v$, then a $G$-invariant convex body, and hence its John ellipsoid, need not be origin-symmetric and may even fail to contain the origin. For example, for any $t>0$, the translated ball
		\[
		B^n+t v
		\]
		is $G$-invariant. 
	\end{remark}

The following lemma records some basic facts about the convergence of $G$-invariant subspace blocks.
	
\begin{lemma}\label{lem:Grass}
	Fix integers $d_1,\dots,d_m\ge 1$ such that $d_1+\cdots+d_m=n$. For each $\ell\in\mathbb N$, let
	\[
	\R^n=V_{1,\ell}\oplus\cdots\oplus V_{m,\ell}
	\]
	be an orthogonal decomposition into $G$-invariant subspaces with $\dim V_{j,\ell}=d_j$ for $j=1,\dots,m$. Assume that, for each $j$, the subspaces $V_{j,\ell}$ converge in $\Gr(n,d_j)$ to a subspace $V_j$ as $\ell\to\infty$. Then each $V_j$ is $G$-invariant, the subspaces $V_1,\dots,V_m$ are pairwise orthogonal, and
	\[
	\R^n=V_1\oplus\cdots\oplus V_m.
	\]
\end{lemma}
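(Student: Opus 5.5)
The plan is to work with the orthogonal projections $P_{j,\ell}:=P_{V_{j,\ell}}$ and $P_j:=P_{V_j}$. By the definition \eqref{grassmaiann metric} of the metric on $\Gr(n,d_j)$, the assumption $V_{j,\ell}\to V_j$ means exactly that $P_{j,\ell}\to P_j$ in operator norm. Every claimed property of the limit decomposition is a closed algebraic condition on these projections, so each one should pass to the limit directly.

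First, $G$-invariance. Since $V_{j,\ell}$ is $G$-invariant and $g$ is orthogonal, $gV_{j,\ell}=V_{j,\ell}$, and hence $gP_{j,\ell}g^{-1}=P_{gV_{j,\ell}}=P_{j,\ell}$, i.e. $gP_{j,\ell}=P_{j,\ell}g$. Letting $\ell\to\infty$ gives $gP_j=P_jg$ for every $g\in G$. Consequently, for $v\in V_j$ we have $gv=gP_jv=P_j(gv)\in V_j$, so $V_j$ is $G$-invariant.

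Second, orthogonality. Pairwise orthogonality of $V_{i,\ell}$ and $V_{j,\ell}$ for $i\neq j$ is equivalent to $P_{i,\ell}P_{j,\ell}=0$. Operator multiplication is continuous in norm, so the limit gives $P_iP_j=0$, which says that $V_i\perp V_j$.

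Third, the direct sum. The decomposition $\R^n=V_{1,\ell}\oplus\cdots\oplus V_{m,\ell}$ into orthogonal pieces means $\sum_j P_{j,\ell}=I$, and this identity passes to the limit as $\sum_jP_j=I$. Hence every $x\in\R^n$ satisfies $x=\sum_jP_jx\in V_1+\cdots+V_m$, and orthogonality makes this sum direct. As a consistency check, $\sum_j d_j=n$ agrees with the trace of $\sum_jP_j=I$.

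There is no real obstacle here. The only point to verify is that the identity $gP_Eg^{-1}=P_{gE}$ holds for orthogonal $g$, and it does because $g$ maps $E^\perp$ onto $(gE)^\perp$. Closedness of $G$ is not needed for this lemma.
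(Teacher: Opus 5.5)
Your proposal is correct and follows essentially the same route as the paper. Both arguments pass three relations to the operator-norm limit: the commutations $gP_{j,\ell}=P_{j,\ell}g$, the relations $P_{i,\ell}P_{j,\ell}=0$ for $i\neq j$, and the identity $\sum_j P_{j,\ell}=I$. The only difference is that you derive the commutation from $gP_Eg^{-1}=P_{gE}$, whereas the paper cites Lemma~\ref{lem:projection-invariant}, which amounts to the same thing.
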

	
	\begin{proof}
	Let
		\[
		P_{j,\ell}:=P_{V_{j,\ell}},\qquad P_j:=P_{V_j}, \qquad j=1,\dots,m.
		\]
		By definition of Grassmannian convergence \eqref{grassmaiann metric},
		\[
		\|P_{j,\ell}-P_j\|\to 0
		\qquad
		(\ell\to\infty).
		\]
		
	Since $V_{j,\ell}$ is $G$-invariant, Lemma~\ref{lem:projection-invariant} gives
	\[
	gP_{j,\ell}=P_{j,\ell}g
	\qquad\text{for every }g\in G.
	\]
	Passing to the limit in operator norm yields
	\[
	gP_j=P_jg
	\qquad\text{for every }g\in G.
	\]
	By Lemma~\ref{lem:projection-invariant} again, $V_j$ is $G$-invariant.
		
		Because the decomposition for each $\ell$ is orthogonal,
		\[
		P_{j,\ell}P_{k,\ell}=0\qquad\text{whenever }j\ne k.
		\]
		Passing to the limit gives
		\[
		P_jP_k=0\qquad\text{whenever }j\ne k.
		\]
		Therefore $V_j\perp V_k$ for $j\ne k$.
		
		Finally,
		\[
		P_{1,\ell}+\cdots+P_{m,\ell}=I
		\]
		for every $\ell$, and passing to the limit yields
		\[
		P_1+\cdots+P_m=I.
		\]
		Thus every $x\in \R^n$ can be written as
		\[
		x=P_1x+\cdots+P_mx,
		\]
		with $P_jx\in V_j$. Hence $\R^n=V_1+\cdots+V_m$. Since the subspaces are pairwise orthogonal, this sum is direct.
	\end{proof}

	\vspace{1.7em}
	We need the following spherical partition associated with the subspace blocks. Let
	\[
	\R^n=V_1\oplus\cdots\oplus V_m
	\]
	be an orthogonal decomposition into nonzero subspaces. For $\delta\in(0,1/\sqrt m)$ and $j=1,\dots,m$, define
	\begin{equation}\label{partation}
	\Omega_{j,\delta}
	=
	\left\{
	u\in \Sph:
	|P_{V_j}u|\ge \delta
	\text{ and }
	|P_{V_t}u|<\delta \text{ for all }t>j
	\right\}.
	\end{equation}
	Then the family $\{\Omega_{1,\delta},\dots,\Omega_{m,\delta}\}$ forms a partition of $\Sph$. Indeed, for every $u\in \Sph$,
	\[
	1=|u|^2=\sum_{j=1}^m |P_{V_j}u|^2.
	\]
	If $|P_{V_j}u|<\delta$ for all $j$, then
	\[
	1=\sum_{j=1}^m |P_{V_j}u|^2<m\delta^2<1,
	\]
	a contradiction. Hence there exists at least one index $j$ such that $|P_{V_j}u|\ge \delta$. Let $j_0$ be the largest such index. Then
	\[
	|P_{V_{j_0}}u|\ge \delta
	\qquad\text{and}\qquad
	|P_{V_t}u|<\delta \text{ for all }t>j_0,
	\]
	so $u\in \Omega_{j_0,\delta}$. Thus
	\begin{equation}\label{eq:Omega-partition}
		\Sph=\Omega_{1,\delta}\cup\cdots\cup\Omega_{m,\delta}.
	\end{equation}
	Moreover, the sets $\Omega_{1,\delta},\dots,\Omega_{m,\delta}$ are clearly pairwise disjoint by definition.
	
	Now set
	\[
	W_r:=V_1\oplus\cdots\oplus V_r,
	\qquad r=1,\dots,m.
	\]
	Then for every $r\in\{1,\dots,m\}$,
	\begin{equation}\label{eq:Omega-union}
		\Omega_{1,\delta}\cup\cdots\cup\Omega_{r,\delta}
		=
		\left\{
		u\in \Sph:
		|P_{V_t}u|<\delta \text{ for all }t>r
		\right\}.
	\end{equation}
	In particular,
	\begin{equation}\label{eq:Omega-decrease}
		\Omega_{1,\delta}\cup\cdots\cup\Omega_{r,\delta}
		\downarrow
		W_r\cap \Sph
		\qquad\text{as }\delta\to 0^{+}.
	\end{equation}
	Indeed, if $u\in W_r\cap \Sph$, then $P_{V_t}u=0$ for all $t>r$, so by \eqref{eq:Omega-union},
	\[
	u\in \Omega_{1,\delta}\cup\cdots\cup\Omega_{r,\delta}
	\qquad\text{for every }\delta>0.
	\]
	Conversely, if
	\[
	u\in \bigcap_{\delta>0}\bigl(\Omega_{1,\delta}\cup\cdots\cup\Omega_{r,\delta}\bigr),
	\]
	then by \eqref{eq:Omega-union},
	$
	|P_{V_t}u|<\delta$
	for all $t>r$
	and for all $\delta>0$, which implies $P_{V_t}u=0$ for all $t>r$. Hence $u\in W_r\cap \Sph$.
	
	Consequently, for every finite Borel measure $\mu$ on $\Sph$,
	\begin{equation}\label{eq:Omega-measure-limit}
		\lim_{\delta\to 0^{+}}
		\mu\bigl(\Omega_{1,\delta}\cup\cdots\cup\Omega_{r,\delta}\bigr)
		=
		\mu(W_r\cap \Sph),
	\end{equation}
	by the continuity from above of finite measures.
	
		\vspace{1.4em}

	For a nonzero finite Borel measure $\mu$ on $\Sph$, define the entropy functional
	\[
	E_\mu(K)=-\frac1{\abs{\mu}}\int_{\Sph}\log h_K(v)\,d\mu(v),\qquad K\in \mathcal K_o^n.
	\]
To solve the dual Minkowski problem by variational methods, one needs to estimate the limiting behavior of the entropy. Entropy estimates in the origin-symmetric setting involving ellipsoids can be found, for example, in \cite{BLYZZ2019,HLYZdual}. We now prove the entropy estimate adapted to $G$-invariant subspace blocks.
	
\begin{lemma}\label{lem:entropy}
	Let $q>0$, let $\mu$ be a nonzero finite $G$-invariant Borel measure on $\Sph$, and assume that $\mu$ satisfies the $G$-invariant $q$-th subspace mass inequality \eqref{G qth subspace mass}. Fix $\varepsilon_0>0$, and let $d_1,\dots,d_m$ be positive integers with
	\[
	d_1+\cdots+d_m=n.
	\]
	For each $\ell\in\mathbb N$, let
	\[
	Q_\ell
	=
	\left\{
	x\in \R^n:
	\sum_{j=1}^m \frac{\abs{P_{V_{j,\ell}}x}^2}{b_{j,\ell}^2}\le 1
	\right\},
	\]
	where
	\[
	\R^n=V_{1,\ell}\oplus\cdots\oplus V_{m,\ell}
	\]
	is an orthogonal decomposition into $G$-invariant subspaces such that
	\[
	\dim V_{j,\ell}=d_j,\qquad j=1,\dots,m,
	\]
	and
	\[
	0<b_{1,\ell}\le \cdots \le b_{m,\ell},\qquad b_{m,\ell}\ge \varepsilon_0.
	\]
Assume that, for each $j$,
$V_{j,\ell}\to V_j$ in $\Gr(n,d_j)$ as $\ell\to\infty$. Set
	\[
	D_r=d_1+\cdots+d_r,\qquad D_0=0.
	\]
	If $q\le n$, let $r$ be the unique index such that
	\[
	D_{r-1}<q\le D_r.
	\]
	If $q>n$, set $r=m$. Then there exist positive constants $t_0,\ell_0$ and a constant $C$ such that, for every $\ell\ge \ell_0$,
	\begin{equation}\label{entropy estimate}
	E_\mu(Q_\ell)
	\le
	-\sum_{j=1}^{r-1}\frac{d_j}{q}\log b_{j,\ell}
	-\left(1-\frac{D_{r-1}}{q}\right)\log b_{r,\ell}
	+t_0\log b_{1,\ell}+C.
	\end{equation}
\end{lemma}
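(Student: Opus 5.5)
We bound $h_{Q_\ell}$ from below on each piece of the spherical partition \eqref{partation} built from the limit decomposition, and then redistribute the resulting masses with Lemma~\ref{lem:weights}. The support function of $Q_\ell$ is
\[
h_{Q_\ell}(v)=\Bigl(\sum_{j=1}^m b_{j,\ell}^2\abs{P_{V_{j,\ell}}v}^2\Bigr)^{1/2}\ge b_{j,\ell}\abs{P_{V_{j,\ell}}v}\quad\text{for every } j.
\]
By Lemma~\ref{lem:Grass}, the limits $V_1,\dots,V_m$ form an orthogonal decomposition into $G$-invariant subspaces. Fix a small $\delta\in(0,1/\sqrt m)$ and use the sets $\Omega_{j,\delta}$ attached to $V_1,\dots,V_m$. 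For $\ell\ge\ell_0$ we have $\|P_{V_{j,\ell}}-P_{V_j}\|<\delta/2$, so $\abs{P_{V_{j,\ell}}v}\ge\delta/2$ on $\Omega_{j,\delta}$. Hence
\[
-\log h_{Q_\ell}(v)\le -\log b_{j,\ell}+\log(2/\delta)\quad\text{on } \Omega_{j,\delta},
\]
and therefore
\[
E_\mu(Q_\ell)\le -\sum_{j=1}^m\lambda_j\log b_{j,\ell}+C,\qquad \lambda_j=\mu(\Omega_{j,\delta})/\abs{\mu}.
\]

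Next we apply Lemma~\ref{lem:weights} with $a_j=-\log b_{j,\ell}$. These are nonincreasing in $j$, so we reverse the order, or equivalently use the tail-sum form. Partial sums from the top correspond to the large-$b$ blocks, and Abel summation gives
\[
-\sum_j\lambda_j\log b_{j,\ell}=-\log b_{m,\ell}+\sum_{j=1}^{m-1}\Lambda_j\bigl(\log b_{j+1,\ell}-\log b_{j,\ell}\bigr),
\]
where $\Lambda_j=\lambda_1+\cdots+\lambda_j=\mu(\Omega_{1,\delta}\cup\cdots\cup\Omega_{j,\delta})/\abs{\mu}$. Each coefficient $\log b_{j+1,\ell}-\log b_{j,\ell}$ is nonnegative. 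By \eqref{eq:Omega-measure-limit}, $\Lambda_j\to\mu(W_j\cap\Sph)/\abs{\mu}$ as $\delta\to0$, where $W_j=V_1\oplus\cdots\oplus V_j$ is $G$-invariant and proper for $j<m$. The $G$-invariant $q$-th subspace mass inequality gives $\mu(W_j\cap\Sph)/\abs{\mu}<\min\{D_j/q,1\}$. Since there are finitely many $j$, we can choose $\delta$ and $t_0>0$ so that
\[
\Lambda_j\le\sigma_j-t_0\quad\text{for all } 1\le j\le m-1,\qquad \sigma_j:=\min\{D_j/q,1\}.
\]
Because the coefficients are nonnegative, replacing $\Lambda_j$ by $\sigma_j-t_0$ only increases the bound. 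Summing back by parts, the $\sigma_j$ part gives exactly
\[
-\sum_{j<r}\frac{d_j}{q}\log b_{j,\ell}-\Bigl(1-\frac{D_{r-1}}{q}\Bigr)\log b_{r,\ell},
\]
since $\sigma_j=1$ for $j\ge r$ and those terms telescope. The $-t_0$ part gives
\[
-t_0\bigl(\log b_{m,\ell}-\log b_{1,\ell}\bigr)=t_0\log b_{1,\ell}-t_0\log b_{m,\ell}\le t_0\log b_{1,\ell}-t_0\log\varepsilon_0,
\]
and the constant $-t_0\log\varepsilon_0$ is absorbed into $C$. When $q>n$ we have $r=m$ and $\sigma_j=D_j/q<1$, and the same computation leaves $-(1-D_{m-1}/q)\log b_{m,\ell}$, as stated.

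The main obstacle is the uniformity in $\ell$. The partition must be taken with respect to the limit blocks $V_j$, so that the mass inequality applies to fixed $G$-invariant subspaces; the moving blocks $V_{j,\ell}$ are handled only through the convergence $\|P_{V_{j,\ell}}-P_{V_j}\|\to0$. A second point to check is the edge case where $D_j=q$ exactly, giving $\sigma_j=1$. The mass inequality still gives strict inequality $<1$ there, because $W_j$ is proper, so the slack $t_0>0$ is still available.
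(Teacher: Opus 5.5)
Your proposal is correct and follows the paper's proof essentially step by step. The shared steps are: the partition $\Omega_{j,\delta}$ built from the limit blocks; the bound $h_{Q_\ell}\ge b_{j,\ell}\delta/2$ on $\Omega_{j,\delta}$, obtained from operator-norm convergence; and a uniform slack $t_0$ extracted from the $G$-invariant mass inequality on the proper invariant subspaces $W_j$.

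The only difference is cosmetic. You carry out the Abel summation by hand, where the paper cites Lemma~\ref{lem:weights}. That lemma in fact applies directly with $a_j=\log b_{j,\ell}$, which is nondecreasing, so no reordering is needed. Your computation also covers $m=1$ without the separate case the paper treats.
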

	
	\begin{proof}
		By Lemma~\ref{lem:Grass}, each $V_j$ is $G$-invariant, the subspaces $V_1,\dots,V_m$ are pairwise orthogonal, and
		\[
		\R^n=V_1\oplus\cdots\oplus V_m.
		\]
Assume first that $m\ge 2$.	Let $\{\Omega_{j,\delta}\}_{j=1}^m$ be the spherical partition as in \eqref{partation}, namely,
	\[
	\Omega_{j,\delta}
	=
	\left\{
	u\in \Sph:
	\abs{P_{V_j}u}\ge \delta
	\text{ and }
	\abs{P_{V_t}u}<\delta\text{ for all }t>j
	\right\},
	\qquad j=1,\dots,m.
	\] For each $r'=1,\dots,m$, set
		\[
		W_{r'}=V_1\oplus\cdots\oplus V_{r'}.
		\]
		Since each $W_{r'}$ is a proper $G$-invariant subspace whenever $r'<m$, the hypothesis and \eqref{eq:Omega-measure-limit} yield
		\[
		\lim_{\delta\to 0^{+}}\frac{\mu(\Omega_{1,\delta}\cup\cdots\cup\Omega_{r',\delta})}{\abs{\mu}}
		=
		\frac{\mu(W_{r'}\cap \Sph)}{\abs{\mu}}
		<
		\min\left\{\frac{D_{r'}}{q},1\right\},
		\qquad r'=1,\dots,m-1.
		\]
		Hence, there exist $t_0>0$ and $\delta_0>0$ such that
		\begin{equation}\label{sigmar}
		\frac{\mu(\Omega_{1,\delta_0}\cup\cdots\cup\Omega_{r',\delta_0})}{\abs{\mu}}
		<
		\min\left\{\frac{D_{r'}}{q},1\right\}-t_0
		=:\sigma_{r'}
		\end{equation}
		for every $r'=1,\dots,m-1$. Set $\sigma_0=0$ and $\sigma_m=1$.

		The convergence $V_{j,\ell}\to V_j$ gives an $\ell_0\in\mathbb N$ such that,
		for every $\ell\ge \ell_0$,
		\[
		\|P_{V_{j,\ell}}-P_{V_j}\|<\delta_0/2
		\qquad\text{for every }j=1,\dots,m.
		\]
		If $u\in \Omega_{j,\delta_0}$, then $\abs{P_{V_j}u}\ge \delta_0$, hence
		\[
		\abs{P_{V_{j,\ell}}u}
		\ge
		\abs{P_{V_j}u}-\abs{(P_{V_{j,\ell}}-P_{V_j})u}
		\ge
		\delta_0/2.
		\]
		Since
		\[
		h_{Q_\ell}(u)^2
		=
		\sum_{j=1}^m b_{j,\ell}^2\abs{P_{V_{j,\ell}}u}^2,
		\]
		it follows that
		\begin{equation}\label{support1}
		h_{Q_\ell}(u)\ge b_{j,\ell}\delta_0/2
		\qquad\text{for }u\in \Omega_{j,\delta_0}.
		\end{equation}
		Define
		\[
		\lambda_{j,\delta_0}=\frac{\mu(\Omega_{j,\delta_0})}{\abs{\mu}}.
		\]
		Then by \eqref{sigmar},
		\begin{equation}
		\lambda_{1,\delta_0}+\cdots+\lambda_{m,\delta_0}=1,
		\qquad
		\lambda_{1,\delta_0}+\cdots+\lambda_{r',\delta_0}< \sigma_{r'}
		\quad (r'=1,\dots,m-1).
		\end{equation}
		Moreover, by \eqref{support1}
	\begin{align}
		E_\mu(Q_\ell)
		&=
		-\frac1{\abs{\mu}}\int_{\Sph}\log h_{Q_\ell}\,d\mu \notag\\
		&=
		-\frac1{\abs{\mu}}\sum_{j=1}^m \int_{\Omega_{j,\delta_0}}\log h_{Q_\ell}\,d\mu \notag\\
		&\le
		-\log(\delta_0/2)-\sum_{j=1}^m \lambda_{j,\delta_0}\log b_{j,\ell}.
		\label{entropy1}
	\end{align}
		Applying Lemma~\ref{lem:weights} with $a_j=\log b_{j,\ell}$, we obtain
		\begin{equation}\label{difference}
		\sum_{j=1}^m \lambda_{j,\delta_0}\log b_{j,\ell}
		\ge
		\sum_{j=1}^m (\sigma_j-\sigma_{j-1})\log b_{j,\ell}.
		\end{equation}
		By the definition of the $\sigma_j$ in \eqref{sigmar}, the quantities $\sigma_j-\sigma_{j-1}$ depend on the position of $r$.
		
		If $r=1$, that is, $0<q\le D_1$, then
		\[
		\sigma_1-\sigma_0=1-t_0,
		\]
		\[
		\sigma_j-\sigma_{j-1}=0\qquad (2\le j\le m-1),
		\]
		and
		\[
		\sigma_m-\sigma_{m-1}=t_0.
		\]
		
		If $2\le r\le m-1$, then
		\[
		\sigma_1-\sigma_0=\frac{d_1}{q}-t_0,
		\]
		\[
		\sigma_j-\sigma_{j-1}=\frac{d_j}{q}\qquad (2\le j\le r-1),
		\]
		\[
		\sigma_r-\sigma_{r-1}=1-\frac{D_{r-1}}{q},
		\]
		\[
		\sigma_j-\sigma_{j-1}=0\qquad (r+1\le j\le m-1),
		\]
		and
		\[
		\sigma_m-\sigma_{m-1}=t_0.
		\]
		
		If $r=m$, that is, $q>D_{m-1}$, then
		\[
		\sigma_1-\sigma_0=\frac{d_1}{q}-t_0,
		\]
		\[
		\sigma_j-\sigma_{j-1}=\frac{d_j}{q}\qquad (2\le j\le m-1),
		\]
		and
		\[
		\sigma_m-\sigma_{m-1}=1-\frac{D_{m-1}}{q}+t_0.
		\]

	Therefore, by \eqref{difference}, if $2\le r\le m-1$, then
	\begin{align}
		\sum_{j=1}^m \lambda_{j,\delta_0}\log b_{j,\ell}
		&\ge
		\left(\frac{d_1}{q}-t_0\right)\log b_{1,\ell}
		+
		\sum_{j=2}^{r-1}\frac{d_j}{q}\log b_{j,\ell}
		+
		\left(1-\frac{D_{r-1}}{q}\right)\log b_{r,\ell}
		+
		t_0\log b_{m,\ell} \notag\\
		&=
		\sum_{j=1}^{r-1}\frac{d_j}{q}\log b_{j,\ell}
		+
		\left(1-\frac{D_{r-1}}{q}\right)\log b_{r,\ell}
		-
		t_0\log b_{1,\ell}
		+
		t_0\log b_{m,\ell}.
		\label{lower-estimate}
	\end{align}
	
	The same estimate also holds when $r=1$ or $r=m$, with the convention  that the sum
	\[
	\sum_{j=1}^{r-1}\frac{d_j}{q}\log b_{j,\ell}
	\]
	is empty when $r=1$. Indeed, if $r=1$, then \eqref{difference} gives
	\[
	\sum_{j=1}^m \lambda_{j,\delta_0}\log b_{j,\ell}
	\ge
	(1-t_0)\log b_{1,\ell}+t_0\log b_{m,\ell}
	=
	\left(1-\frac{D_0}{q}\right)\log b_{1,\ell}
	-t_0\log b_{1,\ell}
	+t_0\log b_{m,\ell},
	\]
	which is exactly \eqref{lower-estimate}. If $r=m$, then \eqref{difference} yields
	\begin{align*}
		\sum_{j=1}^m \lambda_{j,\delta_0}\log b_{j,\ell}
		&\ge
		\left(\frac{d_1}{q}-t_0\right)\log b_{1,\ell}
		+
		\sum_{j=2}^{m-1}\frac{d_j}{q}\log b_{j,\ell}
		+
		\left(1-\frac{D_{m-1}}{q}+t_0\right)\log b_{m,\ell}\\
		&=
		\sum_{j=1}^{m-1}\frac{d_j}{q}\log b_{j,\ell}
		+
		\left(1-\frac{D_{m-1}}{q}\right)\log b_{m,\ell}
		-
		t_0\log b_{1,\ell}
		+
		t_0\log b_{m,\ell},
	\end{align*}
	which is again \eqref{lower-estimate}.
	
	Combining \eqref{entropy1}, \eqref{lower-estimate},  and using $b_{m,\ell}\ge \varepsilon_0$, we conclude that
		\[
		E_\mu(Q_\ell)
		\le
		-\sum_{j=1}^{r-1}\frac{d_j}{q}\log b_{j,\ell}
		-\left(1-\frac{D_{r-1}}{q}\right)\log b_{r,\ell}
		+t_0\log b_{1,\ell}+C,
		\]
		where $C$ depends only on $\delta_0$, $t_0$, and $\varepsilon_0$.
		
		If $m=1$, then $V_{1,\ell}=\R^n$ and
	$
		Q_\ell=b_{1,\ell}B^n.
		$
		Hence
		$
		h_{Q_\ell}(u)=b_{1,\ell}$
		for all $u\in \Sph$,
		and therefore
		\[
		E_\mu(Q_\ell)=-\log b_{1,\ell}.
		\]
		In this case $r=1$. Fix any $t_0>0$ and take $\ell_0=1$. Since
		$b_{1,\ell}\ge\varepsilon_0$, choosing $C\ge -t_0\log\varepsilon_0$ gives
		\[
		-\log b_{1,\ell}
		\le
		-\log b_{1,\ell}+t_0\log b_{1,\ell}+C,
		\]
		which is exactly \eqref{entropy estimate}.
	\end{proof}

	\section{Estimates for dual quermassintegrals}
In variational approaches to the dual Minkowski problem, estimates for dual quermassintegrals are needed. A classical way to obtain such estimates is to compare with suitably chosen barrier bodies; see, for example, \cite{BLYZZ2019,HLYZdual,zhao2018}. In this section, we estimate appropriate barrier bodies adapted to the $G$-invariant subspace blocks.

	\begin{lemma}\label{lem:ballblock}
		Let $d$ and $m$ be integers with $d\ge 1$ and $m\ge 0$, and let
		$0<\alpha<d$. Then there exists a constant $C=C(d,m,\alpha)$ such that
		\[
		\widetilde W_{(d+m)-\alpha}\bigl(bB^d\times B^m\bigr)\le C\,b^\alpha
		\qquad\text{for every }0<b\le 1.
		\]
	\end{lemma}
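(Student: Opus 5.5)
We use the Gaussian representation of Lemma~\ref{lem:gauss} in dimension $N=d+m$. The hypotheses give $0<\alpha<d\le N$; if $\alpha$ happened to equal $N$ the bound would be trivial anyway, since then the quantity is a constant multiple of $b^d\le b^\alpha$. Setting $S=bB^d\times B^m$, Lemma~\ref{lem:gauss} gives
\[
c_0(N,\alpha)\,\widetilde W_{N-\alpha}(S)=\int_{\R^d\times\R^m}\rho_S(x,y)^\alpha e^{-|x|^2-|y|^2}\,dx\,dy .
\]
So it suffices to bound this integral by $Cb^\alpha$. By \eqref{product radial} and homogeneity \eqref{rad_fun_homo}, for $x\ne0,y\ne0$ we have
\[
\rho_S(x,y)=\min\{b/|x|,\,1/|y|\}.
\]
The exceptional sets $\{x=0\}$ and $\{y=0\}$ are null. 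When $m=0$, $S=bB^d$ and $\rho_S=b/|x|$ identically.

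The key pointwise bound is simply
\[
\rho_S(x,y)^\alpha\le b^\alpha|x|^{-\alpha}.
\]
This is the only place the dependence on $b$ enters, and it already produces the factor $b^\alpha$. Hence
\[
\int \rho_S^\alpha e^{-|x|^2-|y|^2}\le b^\alpha\int_{\R^d}|x|^{-\alpha}e^{-|x|^2}\,dx\int_{\R^m}e^{-|y|^2}\,dy .
\]
The $y$-integral is $\pi^{m/2}$, or $1$ when $m=0$. In polar coordinates the $x$-integral equals
\[
\mathcal H^{d-1}(S^{d-1})\int_0^\infty r^{d-1-\alpha}e^{-r^2}\,dr,
\]
which is finite exactly because $\alpha<d$. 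This is where the hypothesis $0<\alpha<d$ is used. Dividing by $c_0(N,\alpha)>0$ gives
\[
C=\frac{\pi^{m/2}\,\mathcal H^{d-1}(S^{d-1})\int_0^\infty r^{d-1-\alpha}e^{-r^2}\,dr}{c_0(N,\alpha)},
\]
which depends only on $d,m,\alpha$.

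There is essentially no obstacle. The only points to check are that Lemma~\ref{lem:gauss} applies, since $S$ is a star body with continuous positive radial function, and that \eqref{product radial} is valid off a null set. The restriction $b\le1$ is not even needed for this bound. If one prefers to avoid the Gaussian lemma, the same estimate follows from the definition $\widetilde W_{N-\alpha}(S)=\frac1N\int_{S^{N-1}}\rho_S^\alpha$. Writing $u=(u_1,u_2)$ and using $\rho_S(u)\le b/|u_1|$, one needs $\int_{S^{N-1}}|u_1|^{-\alpha}\,du<\infty$ for $\alpha<d$. This is a standard computation but slightly messier, so the Gaussian route is preferable.
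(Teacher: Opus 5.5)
Your proof is correct, and it is a streamlined version of the paper's argument rather than a different method.

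The paper also starts from Lemma~\ref{lem:gauss} and \eqref{product radial}. However, it splits $\R^d\times\R^m$ into $A=\{|x|\ge b|y|\}$ and $B=\{|x|<b|y|\}$, according to which term realizes the minimum, and treats the two regions separately:
\begin{itemize}
\item[(a)] On $A$ it bounds the integrand by $(b/|x|)^\alpha e^{-|x|^2-|y|^2}$ and enlarges the domain of integration. This is exactly your computation and gives $C_1 b^\alpha$.
\item[(b)] On $B$ it uses $|y|^{-\alpha}$ together with $\int_0^{bs} r^{d-1}e^{-r^2}\,dr\le (bs)^d/d$ to obtain $C_2 b^d$. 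Only then does it invoke $b\le 1$ and $\alpha<d$ to pass from $b^d$ to $b^\alpha$.
\end{itemize}

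You instead observe that $\min\{b/|x|,1/|y|\}\le b/|x|$ holds everywhere, including on $B$. That makes the second region superfluous. As a result your argument needs neither the separate estimate on $B$ nor the hypothesis $b\le 1$. It also covers $m=0$ uniformly, where the paper treats that case separately. The paper's split does show that the contribution from $B$ is $O(b^d)$, but that finer information is never used.

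Your aside about the case $\alpha=N$ is vacuous, since $\alpha<d\le N$.
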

	
	\begin{proof}
		We first assume $m>0$. By Lemma~\ref{lem:gauss} and \eqref{product radial},
		\[
		c_0(d+m,\alpha)\,\widetilde W_{(d+m)-\alpha}(bB^d\times B^m)
		=
		\int_{\R^d\times \R^m}
		\min\left\{\frac{b}{\abs{x}},\frac1{\abs{y}}\right\}^\alpha
		e^{-\abs{x}^2-\abs{y}^2}\,dx\,dy.
		\]
		Split the domain into
		\[
		A=\{(x,y): \abs{x}\ge b\abs{y}\},
		\qquad
		B=\{(x,y): \abs{x}< b\abs{y}\}.
		\]
		On $A$ the minimum equals $b/\abs{x}$, and on $B$ it equals $1/\abs{y}$.
		
		For the $A$-integral, using polar coordinates in $\R^d$ and $\R^m$,
\begin{align*}
	I_A
	&=
	\int_A \left(\frac{b}{\abs{x}}\right)^\alpha e^{-\abs{x}^2-\abs{y}^2}\,dx\,dy\\
	&=
	\int_{\R^m}\int_{\{x\in\R^d:\,\abs{x}\ge b\abs{y}\}}
	\left(\frac{b}{\abs{x}}\right)^\alpha e^{-\abs{x}^2-\abs{y}^2}\,dx\,dy\\
	&=
	d\omega_d b^\alpha
	\int_{\R^m}\int_{b\abs{y}}^\infty r^{d-\alpha-1}e^{-r^2-\abs{y}^2}\,dr\,dy\\
	&=
	dm\omega_d\omega_m b^\alpha
	\int_0^\infty\int_{bs}^\infty r^{d-\alpha-1}s^{m-1}e^{-r^2-s^2}\,dr\,ds\\
	&\le
	dm\omega_d\omega_m b^\alpha
	\int_0^\infty\int_0^\infty r^{d-\alpha-1}s^{m-1}e^{-r^2-s^2}\,dr\,ds\\
	&\le C_{1}\,b^\alpha,
\end{align*}
		because $d-\alpha>0$ and $m>0$.

		For the $B$-integral,
		\begin{align*}
			I_B
			&=
			\int_B \abs{y}^{-\alpha}e^{-\abs{x}^2-\abs{y}^2}\,dx\,dy\\
			&=
			d\omega_d
			\int_{\R^m}\int_0^{b\abs{y}}\abs{y}^{-\alpha}e^{-\abs{y}^2-r^{2}}
		 r^{d-1}\,dr\,dy\\
			&=
			dm\omega_d\omega_m
			\int_0^\infty s^{m-\alpha-1}e^{-s^2}
			\left(\int_0^{bs} r^{d-1}e^{-r^2}\,dr\right)\,ds\\
			&\le
			dm\omega_d\omega_m
			\int_0^\infty s^{m-\alpha-1}e^{-s^2}
			\left(\int_0^{bs} r^{d-1}\,dr\right)\,ds\\
			&=
			m\omega_d\omega_m\,b^d
			\int_0^\infty s^{m+d-\alpha-1}e^{-s^2}\,ds\\
			&\le C_{2}\,b^d.
		\end{align*}
		Since $m+d-\alpha>0$, the last integral is finite. Moreover $0<b\le 1$ and $d>\alpha$, hence $b^d\le b^\alpha$. Therefore $I_B\le C_{2}b^\alpha$. This proves the claim when $m>0$.
		
	If $m=0$, the conclusion follows immediately from the definition of the dual quermassintegral.
	\end{proof}
	
We need the following upper bound for the dual quermassintegral of a Cartesian product.
	
		\begin{lemma}[Lemma 5.3, \cite{BLYZZ2019}]\label{lem:product}
		Let $1\le k<q<n$. Then, for each $K\in \mathcal K_o^{k}$ and $L\in \mathcal K_o^{n-k}$,
		\[
		\widetilde W_{n-q}(K\times L)
		\le
		C(n,k,q)\,V_k(K)\,\widetilde W_{(n-k)-(q-k)}^{\,(n-k)}(L).
		\]
	\end{lemma}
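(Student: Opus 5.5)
The plan is to use the Gaussian representation of Lemma~\ref{lem:gauss} together with the product formula \eqref{product radial}, and to carry out the integration in the $\R^k$-variable exactly rather than by a crude pointwise bound. Since $K\times L$ is a star body in $\R^n$ and $0<q<n$, Lemma~\ref{lem:gauss} and \eqref{product radial} give
\[
c_0(n,q)\,\widetilde W_{n-q}(K\times L)
=
\int_{\R^{n-k}}\int_{\R^k}\min\{\rho_K(x),\rho_L(y)\}^q\,e^{-\abs{x}^2}\,dx\;e^{-\abs{y}^2}\,dy .
\]
The sets where $x=0$ or $y=0$ have measure zero, so they can be ignored.

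The naive bound $\min\{a,b\}^q\le a^k b^{q-k}$ does not work. It splits the integral, but $\int_{\R^k}\rho_K(x)^k e^{-\abs{x}^2}dx$ diverges at the origin, because in polar coordinates the integrand behaves like $r^{-1}$. This is the main obstacle. To get around it, I would fix $y$, set $a=\rho_L(y)>0$, drop the factor $e^{-\abs{x}^2}\le 1$, and compute the inner integral in polar coordinates $x=ru$ with $u\in S^{k-1}$, using $\rho_K(ru)=\rho_K(u)/r$:
\[
\int_0^\infty \min\{\rho_K(u)/r,\,a\}^q r^{k-1}\,dr
=\int_0^{\rho_K(u)/a} a^q r^{k-1}\,dr+\int_{\rho_K(u)/a}^\infty \rho_K(u)^q r^{k-1-q}\,dr .
\]
The second integral converges precisely because $q>k$, and the sum equals $\bigl(\tfrac1k+\tfrac1{q-k}\bigr)\rho_K(u)^k a^{q-k}$. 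Integrating over $S^{k-1}$ and using $\int_{S^{k-1}}\rho_K^k\,d\mathcal H^{k-1}=k\,V_k(K)$, the inner integral is therefore at most $\tfrac{q}{q-k}\,V_k(K)\,\rho_L(y)^{q-k}$.

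Inserting this bound into the outer integral leaves $\int_{\R^{n-k}}\rho_L(y)^{q-k}e^{-\abs{y}^2}dy$. Since $0<q-k<n-k$, Lemma~\ref{lem:gauss} applies again, now in dimension $n-k$, and identifies this integral as $c_0(n-k,q-k)\,\widetilde W^{(n-k)}_{(n-k)-(q-k)}(L)$. Collecting constants yields the lemma with
\[
C(n,k,q)=\frac{q\,c_0(n-k,q-k)}{(q-k)\,c_0(n,q)} .
\]
Each constant $c_0$ here is finite because its exponent satisfies $n-q>0$, respectively $(n-k)-(q-k)>0$.
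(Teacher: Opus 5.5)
Your proof is correct and complete: integrating exactly in polar coordinates in the $\R^k$-variable removes the divergence you identified in the naive splitting, the computation $\bigl(\tfrac1k+\tfrac1{q-k}\bigr)\rho_K(u)^k a^{q-k}$ together with $\int_{S^{k-1}}\rho_K^k=kV_k(K)$ is right, and the second application of Lemma~\ref{lem:gauss} is legitimate because $0<q-k<n-k$. The paper does not prove this lemma itself and only cites \cite{BLYZZ2019}; your argument uses the same Gaussian representation (Lemma~\ref{lem:gauss}) and product formula \eqref{product radial} as that source, and the paper uses them the same way in Lemma~\ref{lem:ballblock}, so this is essentially the same approach. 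Your constant also simplifies to $\frac{q(n-k)}{n(q-k)}$, since $c_0(n-k,q-k)/c_0(n,q)=(n-k)/n$.
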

	
	The following lemma will be needed in the proof of the existence theorem.
	
	\begin{lemma}\label{lem:blockbarrier}
		Let
		\[
		\R^n=U\oplus V\oplus W
		\]
		be an orthogonal decomposition with $\dim U=k\ge 0$, $\dim V=d\ge 1$, and $\dim W=m\ge 0$. Let $E\subset U$ be a $k$-dimensional origin-centered ellipsoid. Suppose that
		\[
		k<q<k+d.
		\]
		Then there exists a constant $C=C(n,k,d,q)$ such that, for every $0<b\le 1$,
		\[
		\widetilde W_{n-q}\bigl(E\times bB_V\times B_W\bigr)
		\le
		C\,V_k(E)\,b^{\,q-k}.
		\]
	\end{lemma}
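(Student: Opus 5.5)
We treat the cases $k=0$ and $k\ge1$ separately. In both, we reduce to the product estimates already available: Lemma~\ref{lem:ballblock} when $k=0$, and Lemma~\ref{lem:product} followed by Lemma~\ref{lem:ballblock} when $k\ge 1$. Throughout we use that dual quermassintegrals are rotation invariant. Since $U,V,W$ are mutually orthogonal, an orthogonal change of coordinates identifies $\R^n$ with $\R^k\times\R^d\times\R^m$, and $E\times bB_V\times B_W$ with $E\times(bB^d\times B^m)$, where $E\subset \R^k$ and the factors are understood as a product over the coordinate blocks.

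\emph{Case $k=0$.} Here $E$ is absent (by convention $V_0(E)=1$), and the hypothesis reads $0<q<d$. The body is $bB^d\times B^m$ in $\R^{d+m}=\R^n$. Lemma~\ref{lem:ballblock} with $\alpha=q$ gives $\widetilde W_{n-q}(bB^d\times B^m)\le C(d,m,q)\,b^{q}$, which is exactly the claim.

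\emph{Case $k\ge 1$.} Now $1\le k<q<k+d\le n$. If $q<n$, we apply Lemma~\ref{lem:product} with $K=E\in\mathcal K_o^k$ and $L=bB^d\times B^m\in\mathcal K_o^{n-k}$. This gives
\[
\widetilde W_{n-q}(E\times bB_V\times B_W)\le C(n,k,q)\,V_k(E)\,\widetilde W^{(n-k)}_{(n-k)-(q-k)}\bigl(bB^d\times B^m\bigr).
\]
Since $0<q-k<d$, Lemma~\ref{lem:ballblock} applies in dimension $d+m=n-k$ with $\alpha=q-k$. It bounds the last factor by $C(d,m,q-k)\,b^{q-k}$, and combining the two constants yields the statement.

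The only remaining point, and the one needing a small check, is the endpoint $q=n$. This can occur only if $m=0$ and $q=k+d$, but the hypothesis $q<k+d$ excludes it. Hence $q<n$ always holds, and Lemma~\ref{lem:product} is applicable. We also need to confirm that Lemma~\ref{lem:product} allows $L$ to be a product body in $\mathcal K_o^{n-k}$; it does, since it holds for all $L\in\mathcal K_o^{n-k}$. Likewise Lemma~\ref{lem:ballblock} allows $m=0$. The dependence of the constant on $(n,k,d,q)$ is then automatic, because $m=n-k-d$.
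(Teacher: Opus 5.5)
Your proof is correct and follows the paper's argument: the case $k=0$ is Lemma~\ref{lem:ballblock} with $\alpha=q$, and for $k\ge1$ you apply Lemma~\ref{lem:product} with $K=E$, $L=bB_V\times B_W$, then Lemma~\ref{lem:ballblock} with $\alpha=q-k$. Your explicit check that $q<k+d\le n$, so that Lemma~\ref{lem:product} applies, is a detail the paper leaves implicit.
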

	
	\begin{proof}
		Set $\alpha=q-k$. Then $0<\alpha<d$. If $k=0$, the claim is exactly Lemma~\ref{lem:ballblock}. Assume now that $k\ge 1$. Applying Lemma~\ref{lem:product} with $K=E$ and $L=bB_V\times B_W$, we obtain
		\[
		\widetilde W_{n-q}(E\times bB_V\times B_W)
		\le
		C_1(n,k,q)\,V_k(E)\,\widetilde W_{(d+m)-\alpha}(bB_V\times B_W).
		\]
		By Lemma~\ref{lem:ballblock},
		\[
		\widetilde W_{(d+m)-\alpha}(bB_V\times B_W)\le C_2(d,m,\alpha)\,b^\alpha.
		\]
		Combining the two estimates yields the result.
	\end{proof}
	
	When $q>n$, we need the following estimate.
	
	\begin{lemma}\label{lem:W-q>n}
		Let $q>n$, and let
		\[
		Q=
		\left\{
		x\in \R^n:
		\sum_{i=1}^m \frac{\abs{P_{V_i}x}^2}{b_i^2}\le 1
		\right\},
		\]
		where
		\[
		\R^n=V_1\oplus\cdots\oplus V_m
		\]
		is an orthogonal decomposition, $\dim V_i=d_i$, and
		\[
		0<b_1\le \cdots \le b_m.
		\]
		Then
		\[
		\widetilde W_{n-q}(Q)\le C(n,q)\,b_m^{\,q-n}\prod_{i=1}^m b_i^{d_i}.
		\]
	\end{lemma}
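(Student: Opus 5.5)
We bound $\widetilde W_{n-q}(Q)$ directly from the definition
\[
\widetilde W_{n-q}(Q)=\frac1n\int_{\Sph}\rho_Q(u)^q\,du .
\]
The point is that for $q>n$ the integrand is dominated by directions where $\rho_Q$ is largest, namely the largest semi-axis $b_m$. The first step is to split the exponent as $\rho_Q^q=\rho_Q^{\,q-n}\rho_Q^{\,n}$. Since $Q\subset b_mB^n$ (because $b_i\le b_m$ for all $i$, so $\sum|P_{V_i}x|^2/b_i^2\ge |x|^2/b_m^2$), we have $\rho_Q(u)\le b_m$ for all $u$. Since $q-n>0$, this gives
\[
\rho_Q(u)^q\le b_m^{\,q-n}\rho_Q(u)^n .
\]

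Integrating over $\Sph$ and using the polar-coordinate formula $\frac1n\int_{\Sph}\rho_Q^n\,du=V_n(Q)$, we obtain
\[
\widetilde W_{n-q}(Q)\le b_m^{\,q-n}V_n(Q).
\]
It remains to compute $V_n(Q)$. Choose an orthonormal basis adapted to the decomposition $\R^n=V_1\oplus\cdots\oplus V_m$. In this basis $Q$ is the image of $B^n$ under the diagonal linear map acting as $b_i I_{V_i}$ on each $V_i$. Its determinant is $\prod_i b_i^{d_i}$, so
\[
V_n(Q)=\omega_n\prod_{i=1}^m b_i^{d_i}.
\]
Combining the two displays gives the claim with $C(n,q)=\omega_n$. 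In fact the constant does not depend on $q$.

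There is essentially no obstacle here. The only point to check is that the inequality $\rho_Q\le b_m$ is used in the correct direction. This works because the exponent $q-n$ is positive, which is exactly the hypothesis $q>n$. No $G$-invariance or ordering beyond $b_i\le b_m$ is needed.
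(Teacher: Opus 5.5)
Your proof is correct and is essentially the paper's argument. The paper writes $Q=TB^n$ with $T|_{V_i}=b_iI_{V_i}$, changes variables $x=Ty$ in $\widetilde W_{n-q}(Q)=\frac qn\int_Q|x|^{q-n}\,dx$, and uses $|Ty|\le \|T\|\,|y|=b_m|y|$ together with $q-n>0$; this is the same extraction of $b_m^{q-n}$ that you obtain from the pointwise bound $\rho_Q\le b_m$ on the sphere. The paper's constant $\frac qn\int_{B^n}|y|^{q-n}\,dy$ also equals $\omega_n$, so your remark that $C$ does not depend on $q$ holds for the paper's bound as well.
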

	
	\begin{proof}
		Let $T$ be the linear operator defined by
		\[
		T|_{V_i}=b_i\,I_{V_i},
		\qquad i=1,\dots,m.
		\]
		Then $Q=T(B^n)$, $\det T=\prod_{i=1}^m b_i^{d_i}$, and $\|T\|=b_m$.
		
		Since $q>0$, polar coordinates give
		\[
		\widetilde W_{n-q}(Q)=\frac{q}{n}\int_Q \abs{x}^{\,q-n}\,dx.
		\]
		Using the change of variables $x=Ty$, we get
		\begin{equation}\label{Ttrans}
		\widetilde W_{n-q}(Q)
		=
		\frac{q}{n}\det(T)\int_{B^n} \abs{Ty}^{\,q-n}\,dy.
		\end{equation}
		Because $q-n>0$, by \eqref{normdef}, for every $y\ne0$,
		\[
		\abs{Ty}^{\,q-n}=\left|T\left(\frac{y}{\abs y}\right)\right|^{q-n}\abs y^{\,q-n}\le \|T\|^{q-n}\abs{y}^{\,q-n}=b_m^{\,q-n}\abs{y}^{\,q-n}.
		\]
		Therefore by \eqref{Ttrans},
		\[
		\widetilde W_{n-q}(Q)
		\le
		\frac{q}{n}\det(T)\,b_m^{\,q-n}\int_{B^n}\abs{y}^{\,q-n}\,dy.
		\]
		Since $\int_{B^n}\abs{y}^{\,q-n}\,dy<\infty$, the desired estimate follows.
	\end{proof}

	\vspace{1em}
	\section{Existence for the $G$-invariant dual Minkowski problem}
	
	Let $\mu$ be a nonzero finite Borel measure on $\Sph$. The entropy functional of $\mu$, $E_\mu:C^+(\Sph)\to \mathbb R$, is defined by
	\[
	E_\mu(f)
	=
	-\frac{1}{|\mu|}
	\int_{\Sph}\log f(v)\,d\mu(v),
	\qquad f\in C^+(\Sph).
	\]
	For $q>0$, define the variational functional
	\begin{equation}\label{varifun}
		\Phi_\mu(f)
		=
		E_\mu(f)
		+
		\frac1q\log \widetilde W_{n-q}([f]),
		\qquad f\in C^+(\Sph),
	\end{equation}
	where $[f]$ is the Wulff shape generated by $f$. 	Since $\widetilde W_{n-q}$ is homogeneous of degree $q$, we have
	\begin{equation}\label{hom}
		\Phi_\mu(cf)=\Phi_\mu(f)\qquad\text{for every }c>0.
	\end{equation}
By the definition of the Wulff shape, $0<h_{[f]}\le f$ and $[h_{[f]}]=[f]$. Hence
\begin{equation}\label{wulffphi}
	\Phi_{\mu}(f)\le \Phi_{\mu}(h_{[f]}).
\end{equation}
		
	We need  the following lemma  about group-invariant measures from \cite{BKMZ group}:
	
	\begin{lemma}[Lemma 5.1, \cite{BKMZ group}]\label{unique}
		Let $G$ be a closed subgroup of $\On$. If $\mu_1$ and $\mu_2$ are $G$-invariant finite Borel measures on $S^{n-1}$, then $\mu_1 = \mu_2$ if and only if $\int_{S^{n-1}} g d\mu_1 = \int_{S^{n-1}} g d\mu_2$ for any $G$-invariant continuous function $g : S^{n-1} \to \mathbb{R}$.
	\end{lemma}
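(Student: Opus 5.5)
The direction ``$\mu_1=\mu_2$ implies equal integrals'' is trivial, so the content is the converse. I would prove it by averaging over the group with respect to Haar measure, reducing to the Riesz representation theorem.

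\textbf{Setting up the average.} Since $G$ is a closed subgroup of the compact group $\On$, it is itself compact. Hence it carries a unique bi-invariant Haar probability measure $dg$. For an arbitrary $f\in C(\Sn)$, I would define
\[
f^G(x)=\int_G f(gx)\,dg,\qquad x\in\Sn .
\]

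\textbf{Properties of $f^G$.} First, $f^G$ is continuous. The map $(g,x)\mapsto f(gx)$ is continuous on the compact space $G\times\Sn$, hence uniformly continuous. Since every $g$ is an isometry, $\abs{f(gx)-f(gy)}\le\omega_f(\abs{x-y})$, where $\omega_f$ is the modulus of continuity of $f$, and this bound passes to the average. Second, $f^G$ is $G$-invariant: for $h\in G$,
\[
f^G(hx)=\int_G f(ghx)\,dg=f^G(x)
\]
by right invariance of Haar measure.

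\textbf{Transferring integrals.} Next I would show $\int_{\Sn} f\,d\mu_i=\int_{\Sn} f^G\,d\mu_i$ for $i=1,2$. The condition $\mu_i(gE)=\mu_i(E)$ for all Borel $E$ says that the pushforward of $\mu_i$ under $g$ is $\mu_i$. Hence
\[
\int_{\Sn} f(gx)\,d\mu_i(x)=\int_{\Sn} f\,d\mu_i\qquad\text{for each } g\in G.
\]
Because $(g,x)\mapsto f(gx)$ is continuous and bounded, and both measures are finite, Fubini's theorem applies on $G\times\Sn$. It gives
\[
\int_{\Sn} f^G\,d\mu_i=\int_G\int_{\Sn} f(gx)\,d\mu_i(x)\,dg=\int_G\int_{\Sn} f\,d\mu_i\,dg=\int_{\Sn} f\,d\mu_i .
\]

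\textbf{Conclusion.} Since $f^G$ is a $G$-invariant continuous function, the hypothesis gives $\int f^G\,d\mu_1=\int f^G\,d\mu_2$. Therefore $\int f\,d\mu_1=\int f\,d\mu_2$ for every $f\in C(\Sn)$. Finite Borel measures on the compact metric space $\Sn$ are regular, so the uniqueness part of the Riesz representation theorem yields $\mu_1=\mu_2$.

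The only point needing real care is the measurability and Fubini step, together with the use of closedness of $G$. Closedness is exactly what guarantees compactness and hence a finite Haar measure. Without it one would replace $G$ by its closure $\overline G$: invariance of continuous functions passes to $\overline G$, and invariance of a finite Borel measure also passes to limits of group elements, via weak continuity of $g\mapsto g_\#\mu$.
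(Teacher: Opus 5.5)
Your proof is correct and complete. Averaging $f$ over the Haar probability measure of the compact group $G$ gives a continuous $G$-invariant function $f^G$ with $\int_{S^{n-1}} f\,d\mu_i=\int_{S^{n-1}} f^G\,d\mu_i$, by pushforward invariance and Fubini on $G\times S^{n-1}$, and uniqueness in the Riesz representation theorem then forces $\mu_1=\mu_2$.

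The paper does not prove this lemma itself; it quotes it from \cite{BKMZ group}, so there is no in-paper proof to compare against. Your Haar-averaging argument is a standard way to establish it. Your closing remark about passing to $\overline G$ matches the device the paper uses to handle non-closed groups in Theorem~\ref{general exist}, via \cite{shan2026} and Lemma~\ref{lem:G-closure-invariant-subspace}.
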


	To solve the Minkowski problem via variational methods, we hope to transform the
	 dual Minkowski problem into an optimization problem.
	 
	 Let $C^+_G(\Sph)$ denote the set of all positive continuous $G$-invariant functions on $\Sph$, namely
	 \[
	 C^+_G(\Sph)=\{f\in C^+(\Sph): f \text{ is } G\text{-invariant}\}.
	 \]
	
	\begin{proposition}\label{prop:EL}
		Let $q>0$.  Let $G\subset O(n)$ be a closed subgroup with no nonzero fixed points. Let $\mu$ be a nonzero finite $G$-invariant Borel measure on $\Sph$. Suppose that there exists $K_0\in \KG$ such that
		\[
		\Phi_\mu(h_{K_0})
		=
		\sup\{\Phi_\mu(f): f\in C^+_{G}(\Sph)\}.
		\]
		Then there exists $c>0$ such that
		\[
		\mu=\widetilde{C}_q(cK_0,\cdot).
		\]
	\end{proposition}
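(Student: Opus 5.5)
The proof follows the standard Lagrange multiplier argument of \cite{HLYZdual}, restricted to $G$-invariant perturbations. The restriction is then removed using Lemma~\ref{unique}.

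\emph{Perturbation.} Since $G$ has no nonzero fixed points, $K_0\in\KG\subset\mathcal K_o^n$, so $h_0:=h_{K_0}\in C^+_G(\Sph)$. Fix an arbitrary $G$-invariant $g\in C(\Sph)$ and set
\[
h_t:=h_0e^{tg},\qquad |t| \text{ small}.
\]
Then $h_t\in C^+_G(\Sph)$ and $\log h_t=\log h_0+tg$ exactly. Moreover $[h_0]=K_0$, because $h_0$ is a support function.

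\emph{First variation.} Since $\Phi_\mu(h_0)$ is the supremum of $\Phi_\mu$ over $C^+_G(\Sph)$, the function
\[
t\mapsto \Phi_\mu(h_t)=E_\mu(h_0)-\frac{t}{|\mu|}\int_{\Sph} g\,d\mu+\frac1q\log\widetilde W_{n-q}([h_t])
\]
attains its maximum at $t=0$. By the variational formula \eqref{vari}, this function is differentiable at $t=0$, with
\[
\left.\frac{d}{dt}\frac1q\log\widetilde W_{n-q}([h_t])\right|_{t=0}=\frac{1}{\widetilde W_{n-q}(K_0)}\int_{\Sph} g\,d\widetilde C_q(K_0,\cdot).
\]
Setting the derivative equal to zero gives
\[
\frac{1}{|\mu|}\int_{\Sph} g\,d\mu=\frac{1}{\widetilde W_{n-q}(K_0)}\int_{\Sph} g\,d\widetilde C_q(K_0,\cdot)
\]
for every $G$-invariant $g\in C(\Sph)$.

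\emph{Identification via Lemma~\ref{unique}.} The measure $\widetilde C_q(K_0,\cdot)$ is $G$-invariant. This follows from the definition \eqref{qth-dual-curvature-measure}: for $g\in G$ we have $\rho_{K_0}(gu)=\rho_{K_0}(u)$ and $\nu_{K_0}(gx)=g\nu_{K_0}(x)$, hence $\alpha^*_{K_0}(g\eta)=g\,\alpha^*_{K_0}(\eta)$, and $\mathcal H^{n-1}$ is rotation invariant. Both $\mu$ and $\frac{|\mu|}{\widetilde W_{n-q}(K_0)}\widetilde C_q(K_0,\cdot)$ are therefore $G$-invariant finite measures that agree on all $G$-invariant continuous functions. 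Lemma~\ref{unique} then gives
\[
\mu=\frac{|\mu|}{\widetilde W_{n-q}(K_0)}\,\widetilde C_q(K_0,\cdot).
\]

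\emph{Scaling.} The measure $\widetilde C_q(\cdot,\cdot)$ is homogeneous of degree $q$, so $\widetilde C_q(cK_0,\cdot)=c^q\,\widetilde C_q(K_0,\cdot)$. Since $q>0$, we can choose
\[
c=\left(\frac{|\mu|}{\widetilde W_{n-q}(K_0)}\right)^{1/q},
\]
which yields $\mu=\widetilde C_q(cK_0,\cdot)$.

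The main obstacle is that the maximization is taken only over $G$-invariant functions. The Euler--Lagrange equation therefore identifies the two measures only when tested against $G$-invariant $g$. This gap is exactly what Lemma~\ref{unique} closes, and it requires $G$ to be closed, as assumed. Everything else is routine once \eqref{vari} is applied with $f=g$.
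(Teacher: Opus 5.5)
Your proof is correct and takes essentially the same route as the paper: a first variation over $G$-invariant logarithmic perturbations using \eqref{vari}, followed by Lemma~\ref{unique} to pass from agreement on $G$-invariant test functions to equality of measures. The differences are cosmetic. The paper normalizes first, choosing $c$ with $\widetilde W_{n-q}(cK_0)=|\mu|$ and using \eqref{hom} to see that $h_{cK_0}$ is still a maximizer, whereas you rescale at the end via the degree-$q$ homogeneity of $\widetilde C_q$. You also verify the $G$-invariance of $\widetilde C_q(K_0,\cdot)$ directly from \eqref{qth-dual-curvature-measure} instead of citing it.
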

	
	\begin{proof}
		Choose $c>0$ so that
		\[
		\widetilde W_{n-q}(cK_0)=\abs{\mu}.
		\]
		Fix $g\in C(\Sph)$ that is $G$-invariant. For small $t$, set
		\[
		h_t(v)=h_{cK_0}(v)e^{t g(v)},\qquad v\in \Sph.
		\]
	Since $K_0\in \KG\subset \mathcal K^{n}_{o}$, the support function $h_{cK_0}$ is positive and continuous on $\Sph$. Since both $h_{cK_0}$ and $g$ are $G$-invariant, so is $h_t$. Hence $h_t\in C_G^+(\Sph)$ for all small $t$. By the homogeneity \eqref{hom}, $h_{cK_0}=c h_{K_0}$ is also a maximizer of $\Phi_\mu$ over $C_G^+(\Sph)$. Hence
		\[
		\left.\frac{d}{dt}\Phi_\mu(h_t)\right|_{t=0}=0.
		\]
		Using the logarithmic Wulff variation formula \eqref{vari} and the normalization $\widetilde W_{n-q}(cK_0)=\abs{\mu}$, we obtain
		\[
		0
		=
		-\frac1{\abs{\mu}}\int_{\Sph}g\,d\mu
		+
		\frac1{\abs{\mu}}
		\int_{\Sph} g\,d\widetilde{C}_q(cK_0,\cdot),
		\]
		hence
		\[
		\int_{\Sph} g\,d\mu
		=
		\int_{\Sph} g\,d\widetilde{C}_q(cK_0,\cdot)
		\qquad
		\text{for every }g\in C(\Sph)\text{ that is }G\text{-invariant}.
		\]
	Since $cK_0$ is $G$-invariant, \cite[Theorem 6.8]{lyz2018} implies that
	$\widetilde{C}_q(cK_0,\cdot)$ is also $G$-invariant. By Lemma~\ref{unique}, we obtain
		$\mu=\widetilde{C}_q(cK_0,\cdot)$.
	\end{proof}
	
	We are now ready for the existence proof.

	\begin{theorem}\label{thm:q>0}
		Let $q>0$. Let $G\subset O(n)$ be a closed subgroup with no nonzero fixed points. Let $\mu$ be a nonzero finite $G$-invariant Borel measure on $\Sph$. Assume that $\mu$ satisfies the $G$-invariant $q$-th subspace mass inequality.  Then there exists  $K\in \KG$ such that
		\[
		\mu=\widetilde{C}_q(K,\cdot).
		\]
	\end{theorem}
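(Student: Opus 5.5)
By Proposition~\ref{prop:EL}, it suffices to produce $K_0\in\KG$ such that $h_{K_0}$ maximizes $\Phi_\mu$ over $C^+_G(\Sph)$. By \eqref{wulffphi}, the supremum may be taken over support functions of $G$-invariant bodies, since $[f]\in\KG$ for $f\in C^+_G(\Sph)$. By \eqref{hom}, we may normalize. Take a maximizing sequence $K_\ell\in\KG$ with $\Phi_\mu(h_{K_\ell})\to\sup$. Let $Q_\ell$ be the John ellipsoid of $K_\ell$, and normalize so that the largest semi-axis satisfies $b_{m,\ell}=1$. By Lemma~\ref{lem:John}, $Q_\ell\subset K_\ell\subset nQ_\ell$, and $Q_\ell$ is described by an orthogonal decomposition into $G$-invariant eigenspaces $V_{j,\ell}$ with semi-axes $b_{1,\ell}\le\dots\le b_{m,\ell}$. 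Since there are finitely many possible dimension patterns $(d_1,\dots,d_m)$ and the Grassmannians are compact, we pass to a subsequence. Along it the pattern is fixed and $V_{j,\ell}\to V_j$, so Lemma~\ref{lem:Grass} and Lemma~\ref{lem:entropy} apply. Because $\Phi_\mu$ changes by a bounded amount when $K_\ell$ is replaced by $Q_\ell$ (support functions and radial functions are comparable up to the factor $n$), it suffices to control $\Phi_\mu(h_{Q_\ell})$.

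The key claim is that $b_{1,\ell}$ stays bounded below. Then $Q_\ell$, and hence $K_\ell$, lie between two fixed balls. Blaschke selection gives a limit $K_0\in\KG$ containing a ball, and continuity of $E_\mu$ and $\widetilde W_{n-q}$ shows that $h_{K_0}$ is a maximizer.

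To prove the claim, suppose $b_{1,\ell}\to0$. In the case $q<n$, let $r$ be defined as in Lemma~\ref{lem:entropy}, so that $D_{r-1}<q\le D_r$. Compare $Q_\ell$ with a barrier body. Put $U=V_{1,\ell}\oplus\dots\oplus V_{r-1,\ell}$, $V=V_{r,\ell}$, and $W$ the sum of the remaining blocks. Then $Q_\ell\subset E\times b_{r,\ell}B_V\times B_W$, where $E$ is the ellipsoid in $U$ with semi-axes $b_{j,\ell}$. If $q<D_r$, Lemma~\ref{lem:blockbarrier} gives
\[
\tfrac1q\log\widetilde W_{n-q}(Q_\ell)\le \sum_{j<r}\tfrac{d_j}{q}\log b_{j,\ell}+\bigl(1-\tfrac{D_{r-1}}{q}\bigr)\log b_{r,\ell}+C .
\]
Adding this to \eqref{entropy estimate}, the main terms cancel, and we get $\Phi_\mu(h_{Q_\ell})\le t_0\log b_{1,\ell}+C\to-\infty$. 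This contradicts the fact that $\Phi_\mu(h_{B^n})$ is a finite lower bound for the supremum.

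The case $q=D_r$ is the step I expect to be the main obstacle. Lemma~\ref{lem:blockbarrier} requires the strict inequality $q<k+d$. I would first try to merge $V_{r,\ell}$ into $U$: use the volume bound $\widetilde W_{n-q}\lesssim V_q(E')\cdot\widetilde W$ of the bounded factor, via Lemma~\ref{lem:product} with $k=q$. If that fails, I would perturb $q$ slightly downward in the barrier only, absorbing the loss into the $t_0$ slack. For $q=n$ or $q>n$, the same argument applies with Lemma~\ref{lem:W-q>n} or with $\widetilde W_0=V_n$ in place of the barrier. This is not needed here, however, since the intended range in Theorem~\ref{thm:main-q} is $q<n$, and $q\ge n$ would be handled separately.
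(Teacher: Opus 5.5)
Your overall architecture matches the paper's: a maximizing sequence in $\KG$, $G$-invariant John ellipsoid blocks, Lemma~\ref{lem:entropy}, and the barrier $E\times b_{r,\ell}B_V\times B_W$. Your generic case $D_{r-1}<q<D_r$ is exactly the paper's Case~1.

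\textbf{Main gap: the case $q=D_r$.} You flag it but do not close it, and your first suggestion fails. Lemma~\ref{lem:product} requires $k<q$ strictly. At $k=q$ the bound $\widetilde W_{n-q}(E'\times B_W)\lesssim V_q(E')$ is actually false. For $E'=\varepsilon B^q$, the Gaussian representation (Lemma~\ref{lem:gauss}) contains the factor $\int_{\varepsilon|y|}^\infty r^{-1}e^{-r^2}\,dr$, and one gets $\widetilde W_{n-q}(\varepsilon B^q\times B^{n-q})\asymp \varepsilon^q\log(1/\varepsilon)$.

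Your fallback of perturbing $q$ downward can work, but it depends on an inequality you do not supply. The natural candidate, monotonicity of $L^p$ norms, gives $\frac1{q_1}\log\widetilde W_{n-q_1}\le\frac1q\log\widetilde W_{n-q}+C$ for $q_1<q$, which is the wrong direction. What rescues it is boundedness. The barrier $E\times b_{r,\ell}B_V\times B_W$ lies in $\sqrt3\,B^n$, since all its semi-axes are at most $b_{m,\ell}=1$. Hence its radial function satisfies $\rho^q\le 3^{(q-q_1)/2}\rho^{q_1}$, so $\widetilde W_{n-q}\le C\,\widetilde W_{n-q_1}$ on the barrier. Lemma~\ref{lem:blockbarrier} with $q_1\in(D_{r-1},D_r)$, combined with Lemma~\ref{lem:entropy}, then gives $\Phi_\mu(h_{Q_\ell})\le -\frac{q-q_1}{q}\log b_{r,\ell}+t_0\log b_{1,\ell}+C\le\bigl(t_0-\frac{q-q_1}{q}\bigr)\log b_{1,\ell}+C$. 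This tends to $-\infty$ once $q-q_1<qt_0$.

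The paper perturbs upward instead. It takes $q_0\in(q,D_{r+1})$ and uses $L^p$ monotonicity in its correct direction, $\frac1q\log\widetilde W_{n-q}\le\frac1{q_0}\log\widetilde W_{n-q_0}+C$. It then applies Lemma~\ref{lem:blockbarrier} with $k=D_r$, $d=d_{r+1}$, and absorbs the mismatch $q\bigl(\frac1{q_0}-\frac1q\bigr)\log b_{1,\ell}$ into the $t_0$ slack. Your downward version has the mild advantage of also covering $q=D_m=n$.

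\textbf{Secondary gap: the range $q\ge n$.} The statement is for every $q>0$, not only $q<n$, so $q=n$ and $q>n$ are part of what you must prove; Theorem~\ref{thm:G-log} relies on the case $q=n$. Your one-line pointers are the right ones and match the paper's Cases~3 and~4. Take $r=m$ in Lemma~\ref{lem:entropy}. For $q=n$, use $V_n(Q_\ell)=\omega_n\prod_j b_{j,\ell}^{d_j}$. For $q>n$, use Lemma~\ref{lem:W-q>n}, where the cancellation is $\frac{d_m}{q}+\frac{q-n}{q}=1-\frac{D_{m-1}}{q}$. Each case leaves $\Phi_\mu(h_{Q_\ell})\le t_0\log b_{1,\ell}+C$. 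You should carry these out rather than set them aside.

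Minor point: $Q_\ell\subset K_\ell\subset nQ_\ell$ is John's theorem for an origin-centred John ellipsoid, not part of Lemma~\ref{lem:John}.
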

	
	\begin{proof}
	By \eqref{wulffphi}, we can assume that $\{K_\ell\}\subset \KG$ is a maximizing sequence such that
		\[
		\lim_{\ell \to \infty}\Phi_\mu(h_{K_\ell})= \sup\{\Phi_\mu(f): f\in C^+_{G}(\Sph)\}.
		\]
		Because $\Phi_\mu$ is homogeneous of degree $0$, we may assume
		\[
		\diam(K_\ell)=1
		\qquad\text{for every }\ell.
		\]
		Since $K_\ell\in\KG\subset\mathcal K_o^n$, we have $0\in \operatorname{int}K_\ell$ for every $\ell$.
		By Blaschke's selection theorem, after passing to a subsequence we may assume that $K_\ell$ converges in the Hausdorff metric to a compact convex set $K_0$. Since $0\in K_\ell$ for every $\ell$, we have $0\in K_0$. Since $K_\ell$ is $G$-invariant for every $\ell$, the limit $K_0$ is also $G$-invariant. If $K_0$ has nonempty interior, then $K_0\in \KG$, and the continuity of $\Phi_\mu$  with respect to the Hausdorff metric shows that $h_{K_0}$ attains the supremum. Proposition~\ref{prop:EL} then yields the desired solution. It therefore remains to exclude the possibility that $K_0$ is contained in a proper subspace of $\R^n$.
		
		Assume, for contradiction, that $K_0$ is contained in a proper subspace. Let $Q_\ell$ be the John ellipsoid of $K_\ell$. By Lemma~\ref{lem:John}, each $Q_\ell$ is $G$-invariant and centered at the origin. Then,
		as is well known (see Schneider \cite{schneiderbook2014}, p.~588),
		\begin{equation}\label{john inclusion}
		Q_\ell\subset K_\ell\subset nQ_\ell.
		\end{equation}
		Writing $Q_\ell$ in spectral-block form as in Lemma~\ref{lem:John}, we have
		\[
		Q_\ell=
		\left\{
		x\in \R^n:
		\sum_{j=1}^{m_\ell}\frac{\abs{P_{V_{j,\ell}}x}^2}{b_{j,\ell}^2}\le 1
		\right\},
		\]
		where the $V_{j,\ell}$ are pairwise orthogonal $G$-invariant eigenspaces and
		\[
		0<b_{1,\ell}<\cdots<b_{m_\ell,\ell}.
		\]
		Since $m_\ell\in\{1,\dots,n\}$ and the multiplicity pattern of the distinct eigenvalues is one of finitely many partitions of $n$, by passing to a further subsequence we may assume that $m_\ell\equiv m$ is independent of $\ell$ and that
		\[
		\dim V_{j,\ell}=d_j
		\qquad\text{for }j=1,\dots,m.
		\]
		Since each $\Gr(n,d_j)$ is compact, we may pass to a further subsequence and assume that, for every $j$,
		\[
		V_{j,\ell}\to V_j \quad \text{in } \Gr(n,d_j).
		\]
		By Lemma~\ref{lem:Grass},
		\[
		\R^n=V_1\oplus\cdots\oplus V_m
		\]
		is an orthogonal decomposition into $G$-invariant subspaces.
		
		Since
		\[
		Q_\ell\subset K_\ell,\qquad \diam(K_\ell)=1,\qquad K_\ell\subset nQ_\ell,
		\]
		the largest semi-axis satisfies
		\begin{equation}\label{axis ine}
			\frac1{2n}\le b_{m,\ell}\le \frac12.
		\end{equation}
		We also pass to a further subsequence so that each $b_{j,\ell}$ converges to some $b_j\in [0,\infty)$. In particular, the hypothesis of Lemma~\ref{lem:entropy} holds with $\varepsilon_0=1/(2n)$.
		
We claim that
\[
b_{1,\ell}\to b_{1}=0.
\]
Indeed, if $b_1>0$, then for all sufficiently large
$\ell$ one has $(b_1/2)B^n\subset Q_\ell\subset K_\ell$. Passing to the limit gives $(b_1/2)B^n\subset K_0$, contradicting the fact that
$K_0$ has empty interior. Hence $b_1=0$.

		Let
		\[
		D_r=d_1+\cdots+d_r,\qquad D_0=0.
		\]
		If $q\le n$, let $r$ be the unique index such that
		\[
		D_{r-1}<q\le D_r.
		\]
		If $q>n$, set $r=m$.
		Applying Lemma~\ref{lem:entropy} and \eqref{john inclusion}, we obtain
		\begin{equation}\label{eq:entropy-main}
			E_\mu(K_\ell)
			\le
			E_\mu(Q_\ell)
			\le
			-\sum_{j=1}^{r-1}\frac{d_j}{q}\log b_{j,\ell}
			-\left(1-\frac{D_{r-1}}{q}\right)\log b_{r,\ell}
			+t_0\log b_{1,\ell}+C
		\end{equation}
		for all sufficiently large $\ell$.
		
	We now estimate the dual quermassintegral term. We distinguish four cases.
		
		\medskip
		\noindent\textit{Case 1.}  $D_{r-1}<q<D_r$.
		Set
		\[
		U_\ell=V_{1,\ell}\oplus\cdots\oplus V_{r-1,\ell},
		\qquad
		V_\ell=V_{r,\ell},
		\qquad
		W_\ell=(U_\ell\oplus V_\ell)^\perp.
		\]
		Let $E_\ell\subset U_\ell$ be the ellipsoid
		\[
		E_\ell=
		\left\{
		x\in U_\ell:
		\sum_{j=1}^{r-1}\frac{\abs{P_{V_{j,\ell}}x}^2}{b_{j,\ell}^2}\le 1
		\right\},
		\]
		with the convention that $E_\ell$ is absent if $r=1$. Define
		\[
		G_\ell=E_\ell\times b_{r,\ell}B_{V_\ell}\times B_{W_\ell}.
		\]
		Because $b_{1,\ell}< \cdots< b_{m,\ell}\le 1/2$, it follows directly from the definitions  that
		\[
		Q_\ell\subset G_\ell,
		\]
		hence
		\[
		K_\ell\subset nQ_\ell\subset nG_\ell.
		\]
		Now $\dim U_\ell=D_{r-1}$ and $\dim V_\ell=d_r$, and
		\[
		D_{r-1}<q<D_{r}=D_{r-1}+d_r,
		\]
		so Lemma~\ref{lem:blockbarrier},  applied with $k=D_{r-1}$ and $d=d_r$, gives
		\[
		\widetilde W_{n-q}(G_\ell)
		\le
		C_1\,V_{D_{r-1}}(E_\ell)\,b_{r,\ell}^{\,q-D_{r-1}}.
		\]
		Since
		\[
		V_{D_{r-1}}(E_\ell)=C_{2}\,\prod_{j=1}^{r-1} b_{j,\ell}^{d_j},
		\]
		we obtain
		\[
		\frac1q\log \widetilde W_{n-q}(K_\ell)
		\le
		\frac1q\log \widetilde W_{n-q}(nG_\ell)
		\le
		\sum_{j=1}^{r-1}\frac{d_j}{q}\log b_{j,\ell}
		+
		\left(1-\frac{D_{r-1}}{q}\right)\log b_{r,\ell}
		+
		C_3.
		\]
		Combining this with \eqref{eq:entropy-main} and \eqref{varifun}, we obtain
		\[
		\Phi_\mu(h_{K_\ell})\le t_0\log b_{1,\ell}+C_4.
		\]
		Since $t_0>0$ and  $b_{1,\ell}\to 0$, it follows that
		\[
		\Phi_\mu(h_{K_\ell})\to -\infty,
		\]
		contradicting the fact that $\Phi_\mu(h_{K_\ell})\to \sup\{\Phi_\mu(f):f\in C^+_{G}(\Sph)\}\ge\Phi_\mu(h_{B^{n}})>-\infty$.
		
	\medskip
	\noindent\textit{Case 2.}  $q=D_r$ for some $0<r<m$. Choose $q_0$ such that
		\[
		q<q_0<D_{r+1}
		\]
		and so close to $q$ that
		\begin{equation}\label{q00}
		t_0+q\left(\frac1{q_0}-\frac1q\right)>0.
		\end{equation}
		By the monotonicity of $L^p$-norms on the sphere,
		\[
		\left(\frac1{n\omega_n}\int_{\Sph}\rho_{K_\ell}(u)^q\,du\right)^{1/q}
		\le
		\left(\frac1{n\omega_n}\int_{\Sph}\rho_{K_\ell}(u)^{q_0}\,du\right)^{1/q_0},
		\]
		hence
		\begin{equation}\label{q0}
		\frac1q\log \widetilde W_{n-q}(K_\ell)
		\le
		\frac1{q_0}\log \widetilde W_{n-q_0}(K_\ell)+C_1.
		\end{equation}
		Set
		\[
		U_\ell=V_{1,\ell}\oplus\cdots\oplus V_{r,\ell},
		\qquad
		V_\ell=V_{r+1,\ell},
		\qquad
		W_\ell=(U_\ell\oplus V_\ell)^\perp,
		\]
		and let $E_\ell\subset U_\ell$ be the ellipsoid
		\[
		E_\ell=
		\left\{
		x\in U_\ell:
		\sum_{j=1}^{r}\frac{\abs{P_{V_{j,\ell}}x}^2}{b_{j,\ell}^2}\le 1
		\right\}.
		\]
		Define
		\[
		H_\ell=E_\ell\times b_{r+1,\ell}B_{V_\ell}\times B_{W_\ell}.
		\]
		Again $Q_\ell\subset H_\ell$, so $K_\ell\subset nH_\ell$. We have
		\[
		\dim U_\ell=D_r=q
		\qquad\text{and}\qquad
		q<q_0<D_{r+1}=q+d_{r+1}.
		\]
		Applying Lemma~\ref{lem:blockbarrier} to $H_\ell$, we get
		\[
		\widetilde W_{n-q_0}(H_\ell)
		\le
		C_2\,V_q(E_\ell)\,b_{r+1,\ell}^{\,q_0-q}.
		\]
		Since
		\[
		V_q(E_\ell)=C_{3}\prod_{j=1}^{r}b_{j,\ell}^{d_j},
		\]
		we obtain
		\[
		\frac1{q_0}\log \widetilde W_{n-q_0}(K_\ell)
		\le
		\frac1{q_0}\log \widetilde W_{n-q_0}(nH_\ell)
		\le
		\sum_{j=1}^{r}\frac{d_j}{q_0}\log b_{j,\ell}
		+
		\frac{q_0-q}{q_0}\log b_{r+1,\ell}
		+
		C_4.
		\]
		Since $b_{r+1,\ell}\le 1/2$, the second logarithmic term is nonpositive, and by \eqref{q0},
		\[
		\frac1q\log \widetilde W_{n-q}(K_\ell)
		\le
		\sum_{j=1}^{r}\frac{d_j}{q_0}\log b_{j,\ell}+C_5.
		\]
		Now \eqref{eq:entropy-main} becomes
		\[
		E_\mu(K_\ell)
		\le
		-\sum_{j=1}^{r}\frac{d_j}{q}\log b_{j,\ell}
		+t_0\log b_{1,\ell}+C,
		\]
		because $q=D_r$. Combining the last two estimates yields
		\[
		\Phi_\mu(h_{K_\ell})
		\le
		\sum_{j=1}^{r} d_j\left(\frac1{q_0}-\frac1q\right)\log b_{j,\ell}
		+t_0\log b_{1,\ell}
		+
		C_6.
		\]
		Since $\frac1{q_0}-\frac1q<0$ and $b_{j,\ell}\ge b_{1,\ell}$, we have
	\[
	\begin{aligned}
		\sum_{j=1}^{r} d_j\left(\frac1{q_0}-\frac1q\right)\log b_{j,\ell}
		&\le
		\left(\sum_{j=1}^{r}d_j\right)
		\left(\frac1{q_0}-\frac1q\right)\log b_{1,\ell} \\
		&=
		q\left(\frac1{q_0}-\frac1q\right)\log b_{1,\ell}.
	\end{aligned}
	\]
		Consequently,
		\[
		\Phi_\mu(h_{K_\ell})
		\le
		\left(
		t_0+q\left(\frac1{q_0}-\frac1q\right)
		\right)\log b_{1,\ell}+C_6.
		\]
		By the choice of $q_0$ in \eqref{q00}, the coefficient of $\log b_{1,\ell}$ is positive, and therefore, since $b_{1,\ell}\to 0$,
		\[
		\Phi_\mu(h_{K_\ell})\to -\infty.
		\]
		This again contradicts the maximizing property of $\{K_\ell\}$.
		
		\medskip
		\noindent\textit{Case 3.}  $q=n=D_{m}$.
		
		In this case, \eqref{eq:entropy-main} becomes
		\begin{equation}\label{q=n entropy}
		E_\mu(K_\ell)
		\le
		E_\mu(Q_\ell)
		\le
		-\frac1n\sum_{j=1}^m d_j\log b_{j,\ell}
		+t_0\log b_{1,\ell}
		+C.
		\end{equation}
		On the other hand, since $K_\ell\subset nQ_\ell$, we have
		\[
		V_n(K_\ell)\le V_n(nQ_\ell)=n^nV_n(Q_\ell)=n^n\omega_n\prod_{j=1}^m b_{j,\ell}^{d_j},
		\]
		and therefore
	\begin{equation}\label{q=n W}
		\frac1n\log V_n(K_\ell)
		\le
		\frac1n\log V_n(Q_\ell)+C_1
		=
		\frac1n\sum_{j=1}^m d_j\log b_{j,\ell}+C_2.
	\end{equation}
		Combining \eqref{q=n entropy} and \eqref{q=n W}, we obtain
		\[
		\Phi_\mu(h_{K_\ell})\le t_0\log b_{1,\ell}+C_3.
		\]
		Since $b_{1,\ell}\to 0$, it follows that
		\[
		\Phi_\mu(h_{K_\ell})\to -\infty,
		\]
		a contradiction.
		
		\medskip
		\noindent\textit{Case 4.}  $q>n=D_{m}$.
		
		In this case, \eqref{eq:entropy-main} becomes
	\begin{equation}\label{q>n1}
		E_\mu(K_\ell)
		\le
		-\sum_{j=1}^{m-1}\frac{d_j}{q}\log b_{j,\ell}
		-\left(1-\frac{D_{m-1}}{q}\right)\log b_{m,\ell}
		+t_0\log b_{1,\ell}+C.
	\end{equation}
		On the other hand, by Lemma~\ref{lem:W-q>n} and the inclusion $K_\ell\subset nQ_\ell$, we have
		\[
		\widetilde W_{n-q}(K_\ell)
		\le
		\widetilde W_{n-q}(nQ_\ell)
		\le
		C_1\, b_{m,\ell}^{\,q-n}\prod_{j=1}^m b_{j,\ell}^{d_j}.
		\]
		Therefore,
	\begin{equation}\label{q>n2}
		\begin{aligned}
			\frac1q\log \widetilde W_{n-q}(K_\ell)
			&\le
			\sum_{j=1}^{m}\frac{d_j}{q}\log b_{j,\ell}
			+
			\frac{q-n}{q}\log b_{m,\ell}
			+
			C_2\\
			&=
			\sum_{j=1}^{m-1}\frac{d_j}{q}\log b_{j,\ell}
			+
			\left(1-\frac{D_{m-1}}{q}\right)\log b_{m,\ell}
			+
			C_2.
		\end{aligned}
\end{equation}
		Combining \eqref{q>n1} and \eqref{q>n2}, we obtain
		\[
		\Phi_\mu(h_{K_\ell})\le t_0\log b_{1,\ell}+C_3.
		\]
		Since $b_{1,\ell}\to 0$, it follows that
		\[
		\Phi_\mu(h_{K_\ell})\to -\infty,
		\]
		a contradiction.

		In all cases we arrive at a contradiction. Therefore $K_0$ cannot be contained in a proper subspace of $\R^n$. Hence $K_0\in \KG$, and the continuity of $\Phi_\mu$ yields that $h_{K_0}$ attains the supremum. Proposition~\ref{prop:EL} then gives the desired solution.
	\end{proof}

	Theorem~\ref{thm:q>0} gives a sufficient condition for the $G$-invariant dual Minkowski problem when $q>0$. Moreover, when $0<q<n$, it follows from  Theorem~\ref{thm:G-invariant-subspace-mass} that the $G$-invariant $q$-th subspace mass inequality is also necessary. However, when $q\ge n$, this condition is clearly not necessary. We next turn to the case $q=n$, namely, the celebrated logarithmic Minkowski problem.  In analogy with the classical subspace concentration condition, we introduce the $G$-invariant subspace concentration condition.
	
\begin{definition}\label{Def $G$-invariant subspace concentration condition}
	Let $G\subset O(n)$. A nonzero finite Borel measure $\mu$ on $S^{n-1}$ is said to satisfy the \textit{$G$-invariant subspace concentration condition} if
	\begin{equation}\label{G-invariant-subspace-concentration}
		\frac{\mu(L\cap S^{n-1})}{\mu(S^{n-1})}
		\le
		\frac{\dim L}{n}
	\end{equation}
	for each proper $G$-invariant subspace $L\subset \R^n$, and whenever equality holds for some proper $G$-invariant subspace $L$,  there exists a subspace $\widetilde L\subset \R^n$ complementary to $L$ such that $\mu$ is concentrated on
	$
	S^{n-1}\cap (L\cup \widetilde L)$.
\end{definition}
	
As with the $G$-invariant $q$-th subspace mass inequality, this condition only tests the mass distribution on $G$-invariant subspaces. For instance, when $G$ is irreducible, the condition is vacuous.  The following theorem gives an existence result in the $G$-invariant setting; when $G=\{\pm I\}$, it recovers the even logarithmic Minkowski problem.

\begin{theorem}\label{thm:G-log}
 Let $G\subset O(n)$ be a closed subgroup with no nonzero fixed points. Let $\mu$ be a nonzero finite $G$-invariant Borel measure on $\Sph$. Assume that $\mu$ satisfies the $G$-invariant subspace concentration condition.
	Then there exists  $K\in \KG$ such that
	\[
\mu=	V_K.
	\]
\end{theorem}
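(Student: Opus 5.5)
We would argue by induction on $n$, splitting according to whether equality ever occurs in \eqref{G-invariant-subspace-concentration}.

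\emph{Strict case.} Suppose first that strict inequality holds in \eqref{G-invariant-subspace-concentration} for every proper $G$-invariant subspace $L$. For $q=n$ we have $\dim L<n=q$, so $\min\{\dim L/n,1\}=\dim L/n$. Hence $\mu$ satisfies the $G$-invariant $n$-th subspace mass inequality \eqref{G qth subspace mass}. Theorem~\ref{thm:q>0} with $q=n$ then gives $K\in\KG$ with $\mu=\widetilde C_n(K,\cdot)=V_K$, and we are done.

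\emph{Equality case: a $G$-invariant complement.} Suppose equality holds for some proper $G$-invariant $L$ with $\dim L=k$, and let $\widetilde L$ be a complement with $\mu$ concentrated on $\Sph\cap(L\cup\widetilde L)$. The first step is to replace $\widetilde L$ by a $G$-invariant complement. Let $\nu=\mu|_{\Sph\setminus L}$. It is $G$-invariant, because both $\mu$ and $L$ are. Let $F$ be the linear span of $\supp\nu$. Then $F$ is $G$-invariant and $F\subset\widetilde L$, so $F\cap L=\{0\}$. If $\dim F<n-k$, then $L\oplus F$ is a proper $G$-invariant subspace carrying all of $\mu$. This contradicts $\mu(\Sph\cap(L\oplus F))/|\mu|\le \dim(L\oplus F)/n<1$. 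Hence $F$ is a $G$-invariant complement of $L$. Moreover $\mu(\Sph\cap F)=\frac{n-k}{n}|\mu|$, and $\mu=\mu_1+\mu_2$ with $\mu_1=\mu|_{L}$ and $\mu_2=\mu|_{F}$.

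\emph{Equality case: reduction to lower dimension.} Next we reduce to lower dimension. Since $L$ is $G$-invariant, so is $L^\perp$ (Lemma~\ref{lem:orthogonal-complement-invariant}). Take the linear map $A$ with $A=I$ on $L$ and $A^{T}|_{F}=P_{L^\perp}|_F$; concretely, choose $A$ so that $A^{-T}$ fixes $L$ and maps $L^\perp$ onto $F$. Because $L$, $L^\perp$ and $F$ are all $G$-invariant, $A$ commutes with $G$, so $A$ maps $\KG$ to $\KG$. We then use the standard transformation rule for cone-volume measures: $V_{AK}$ is a constant multiple of the pushforward of $V_K$ under $u\mapsto A^{-T}u/|A^{-T}u|$. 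This reduces the problem to an orthogonal product $K=K_1\times K_2$ with $K_1\subset L$ and $K_2\subset L^\perp$. For such a product, $V_K$ is concentrated on $\Sph\cap(L\cup L^\perp)$. Its restriction to $L$ is $V_n(K_2)\cdot\frac{k}{n}\cdot\frac{1}{k}\cdot$(the $k$-dimensional cone-volume measure of $K_1$), and symmetrically on $L^\perp$. The total masses are automatically split in the ratio $k:(n-k)$, which matches the equality. The independent scalings $K_1\mapsto sK_1$ and $K_2\mapsto tK_2$ have homogeneities $s^kt^{n-k}$ on both pieces, and that suffices to match any prescribed positive constants. So it is enough to solve the log-Minkowski problem in $L$ for (the transported) $\mu_1$ with the group $G|_L$, and in $L^\perp$ for $\mu_2$ with $G|_{L^\perp}$. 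Both restricted groups have no nonzero fixed points, since $\Fix(G)=\{0\}$. Induction on dimension then applies.

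\emph{Inherited condition (main obstacle).} The step I expect to be hardest is checking that the pieces inherit the $G|_L$-invariant subspace concentration condition, and likewise for the transported measure on $L^\perp$. For a $G$-invariant $L'\subset L$ we have $\mu_1(L')/|\mu_1|=\frac{n}{k}\,\mu(L')/|\mu|\le \dim L'/k$. In the equality case, $L'\oplus F$ is also a $G$-invariant subspace attaining equality, so $\mu$ is concentrated on $L'\cup \widetilde{L'}$ for some complement $\widetilde{L'}$. Applying the span argument above to $\mu_1|_{\Sph\setminus L'}$ yields a complement of $L'$ inside $L$. For the $L^\perp$ side, we must additionally check that pushing $\mu_2$ forward under the radial map of $P_{L^\perp}|_F$ preserves the condition. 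This holds because the map is a $G$-equivariant linear isomorphism: it sends subspaces to subspaces of the same dimension and $G$-invariant ones to $G$-invariant ones. Finally, the closedness hypothesis on $G$ can be arranged by passing to $\overline G$, which has the same invariant bodies, measures and subspaces.
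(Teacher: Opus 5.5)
Your proof is correct, and its outer structure matches the paper's. Both argue by induction on $n$, take the strict case from Theorem~\ref{thm:q>0} at $q=n$, and in the equality case replace $\widetilde L$ by the $G$-invariant span $F$ of the support of $\mu$ off $L$. Your $F$ is the paper's $M$, and your contradiction via $L\oplus F$ carrying all of $\mu$ is an equally good variant of the paper's mass count on $M$ itself.

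The real difference is the gluing step. The paper solves the two lower-dimensional problems intrinsically on $L$ and on the oblique complement $M$, using the restrictions $\mu_L$ and $\mu_M$. It then invokes an external gluing lemma (Lemma~5.2 of \cite{CLZ2019}, Lemma~7.2 of \cite{KLYZ2013JAMS}), which builds $K=aD+a'D'$ from oblique projections of $C,C'$ onto $M^\perp$ and $L^\perp$, and it checks $G$-invariance of $D,D'$ by hand. You instead straighten $F$ onto $L^\perp$ by a linear map commuting with $G$. Gluing then reduces to two facts: the elementary product formula for the cone-volume measure of $K_1\times K_2$, and the rule that $V_{AK}$ equals $\abs{\det A}$ times the pushforward of $V_K$ under $u\mapsto A^{-T}u/\abs{A^{-T}u}$. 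The $G$-invariance of $A(K_1\times K_2)$ is then automatic. This makes the argument self-contained, at the cost of one verification the paper avoids. You must show that the concentration condition on $F$ survives the radial pushforward by $T=P_{L^\perp}|_F$. Your equivariant-isomorphism argument does this: in the equality case, a complement $W\cap F$ of $T^{-1}U'$ in $F$ is carried to the complement $T(W\cap F)$ of $U'$ in $L^\perp$.

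A few slips to fix:
\begin{itemize}
\item Your two descriptions of $A$ are incompatible unless $F=L^\perp$. The consistent choice is $A^{-T}=I$ on $L$ and $A^{-T}|_{L^\perp}=(P_{L^\perp}|_F)^{-1}$. This gives $A^{T}=I$ on $L$ and $A^{T}|_F=P_{L^\perp}|_F$, hence $A=I$ on $L^\perp$, not on $L$.
\item In the product formula the factor is $V_{n-k}(K_2)$, not $V_n(K_2)$.
\item The scalings cannot match two independent constants, since both pieces pick up the same factor $s^kt^{n-k}$. The construction works precisely because products always split mass in the ratio $k:(n-k)$, which agrees with the equality case, as you observe.
\item The detour through $L'\oplus F$ is unnecessary. $L'$ itself attains equality in $\R^n$, which yields $\widetilde{L'}$ directly. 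Then $L=L'\oplus(\widetilde{L'}\cap L)$ already gives the complement inside $L$ without a span argument.
\end{itemize}
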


\begin{proof}
We prove the assertion by induction on $n$. The case $n=1$ is trivial.

If the inequality in \eqref{G-invariant-subspace-concentration}  is strict for every proper $G$-invariant subspace, then the conclusion follows immediately from the case $q=n$ of Theorem~\ref{thm:q>0}. Hence we may assume that there exists a proper $G$-invariant subspace $L\subset \R^n$ such that
	\begin{equation}\label{subspaceL}
	\frac{\mu(L\cap \Sph)}{\abs{\mu}}=\frac{\dim L}{n}.
	\end{equation}
	By hypothesis, there exists a complementary subspace $\widetilde L$ of $L$ such that
	\[
	\supp \mu \subset (L\cup \widetilde L)\cap \Sph.
	\]
	
	Define
	\[
	M=\Span\bigl(\supp\mu\setminus L\bigr).
	\]
	Since $\supp \mu\setminus L\ \subset \widetilde L$, we have $M\subset \widetilde L$, hence
	\[
	L\cap M=\{0\}.
	\]
	We claim that $M$ is $G$-invariant. Indeed, if $u\in \supp\mu\setminus L$ and $g\in G$, then $gu\in \supp\mu$ because $\mu$ is $G$-invariant. Since $L$ is $G$-invariant, $gu\notin L$, for otherwise $u=g^{-1}(gu)\in L$, a contradiction. Hence $gu\in \supp\mu\setminus L$, and therefore $M$ is $G$-invariant.

	Next, we claim that $M$ is complementary to $L$. Since
	$
	\supp\mu\setminus L\subset M$,
	we have
	\[
	\supp\mu\subset (L\cup M)\cap\Sph.
	\]
	Together with $L\cap M=\{0\}$, this gives
	\begin{equation}\label{subspaceM}
		\mu(M\cap \Sph)
		=
		|\mu|-\mu(L\cap \Sph)
		=
		\frac{n-\dim L}{n}\,|\mu|.
	\end{equation}
	If $\dim M<n-\dim L$, then $M$ is a proper $G$-invariant subspace, and the assumed subspace concentration inequality would imply
	\[
	\mu(M\cap \Sph)\le \frac{\dim M}{n}\,\abs{\mu}
	<
	\frac{n-\dim L}{n}\,\abs{\mu},
	\]
	a contradiction. Therefore
	$
	\dim M=n-\dim L$.
	Since $L\cap M=\{0\}$, it follows that
	\[
	\R^n=L\oplus M.
	\]
	
	Let
	\[
	\mu_L=\mu|_{L\cap \Sph},
	\qquad
	\mu_M=\mu|_{M\cap \Sph}.
	\]
	Then by \eqref{subspaceL} and \eqref{subspaceM},
	\[
	\abs{\mu_L}=\frac{\dim L}{n}\,\abs{\mu},
	\qquad
	\abs{\mu_M}=\frac{\dim M}{n}\,\abs{\mu}.
	\]
	We claim that $\mu_L$ satisfies the same $G|_L$-invariant subspace concentration condition on $L$, and likewise for $\mu_M$ on $M$.
	Indeed, let $U\subset L$ be a proper $G|_L$-invariant subspace. Then $U$ is also a proper $G$-invariant subspace of $\R^n$, and hence
	\[
	\mu_L(U\cap \Sph)
	=
	\mu(U\cap \Sph)
	\le
	\frac{\dim U}{n}\,\abs{\mu}
	=
	\frac{\dim U}{\dim L}\,\abs{\mu_L}.
	\]
	Suppose equality holds, i.e.,
	\[
	\mu_L(U\cap \Sph)=\frac{\dim U}{\dim L}\,\abs{\mu_L}.
	\]
	Then necessarily
	\[
	\mu(U\cap \Sph)=\frac{\dim U}{n}\,\abs{\mu}.
	\]
	By the  hypothesis, there exists a complementary subspace $U'$ of $U$ in $\R^n$ such that
	\[
	\supp\mu\subset (U\cup U')\cap \Sph.
	\]
	Intersecting with $L$ gives
	\[
	\supp\mu_L\subset (U\cup (U'\cap L))\cap \Sph.
	\]
	Since $U\subset L$ and $\R^n=U\oplus U'$, every $x\in L$ may be written uniquely as
	\[
	x=u+u',
	\qquad
	u\in U,\ u'\in U'\cap L,
	\]
	so
	\[
	L=U\oplus (U'\cap L).
	\]
	Thus $U'\cap L$ is complementary to $U$ in $L$, and $\mu_L$ satisfies the required condition in $L$. The same argument applies to $\mu_M$ in $M$.
	
Moreover, $\mu_L$ and $\mu_M$ are invariant under the restricted actions $G|_L$ and $G|_M$, respectively. Since $G$ is compact, the restricted groups $G|_L$ and $G|_M$ are closed. Since $\mathrm{Fix}(G)=\{0\}$, the restricted actions on $L$ and on $M$ also have no nonzero fixed points. Therefore, by the induction hypothesis, there exist convex bodies
\[
C\subset L,
\qquad
C'\subset M,
\]
invariant under $G|_L$ and $G|_M$, respectively, such that
\[
V_C^{\,L}=\mu_L,
\qquad
V_{C'}^{\,M}=\mu_M.
\]
Viewed as subsets of $\R^n$, both $C$ and $C'$ are $G$-invariant.

Let
\[
D=\{(x+L^\perp)\cap M^\perp:\ x\in C\}\subset M^\perp,
\]
and
\[
D'=\{(y+M^\perp)\cap L^\perp:\ y\in C'\}\subset L^\perp.
\]
By Lemma~5.2 of \cite{CLZ2019} (see also Lemma~7.2 of \cite{KLYZ2013JAMS}), there exist constants $a>0$ and $a'>0$ such that the convex body
\[
K:=aD+a'D'
\]
satisfies
\[
\mu=V_K.
\]

It remains to show that $K$ is $G$-invariant. Since $L$ and $M$ are $G$-invariant and $G\subset O(n)$, the orthogonal complements $L^\perp$ and $M^\perp$ are also $G$-invariant. Moreover, both $C$ and $C'$ are $G$-invariant. Hence, by definition, $D$ and $D'$ are also $G$-invariant. Consequently, $K=aD+a'D'$ is $G$-invariant.
Thus $K\in \KG$.
\end{proof}
\vspace{0.5em}
\begin{remark}
	In fact, as shown in the proof of  Theorem \ref{thm:G-log}, since \(M\subset \widetilde L\) and \(\dim M=\dim \widetilde L\), we have
	$
	M=\widetilde L$.
	That is, the $G$-invariant subspace \(M\) is precisely the complementary subspace \(\widetilde L\).
	
In other words, for a $G$-invariant measure satisfying the condition in Definition~\ref{Def $G$-invariant subspace concentration condition}, the complementary subspace $\widetilde L$ in the equality case is automatically $G$-invariant. Consequently, one may equivalently require the complementary subspace in the equality case to be $G$-invariant.
\end{remark}

So far, we have obtained existence results for the $G$-invariant dual Minkowski problem when $G$ is closed. We now summarize these results and extend them to general subgroups, without assuming closedness. First, we need the following lemma. 
\begin{lemma}\label{lem:G-closure-invariant-subspace}
	Let $G$ be a subgroup of $O(n)$, and let $\overline{G}$ denote the closure of $G$ in $O(n)$. Let $L$ be a linear subspace of $\mathbb R^n$. Then $L$ is $G$-invariant if and only if it is $\overline{G}$-invariant.
\end{lemma}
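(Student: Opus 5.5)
The plan is to prove the two implications separately; one is trivial and the other goes through a continuity argument using orthogonal projections.

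For the direction ``$\overline{G}$-invariant $\Rightarrow$ $G$-invariant'', note that $G\subset\overline{G}$. So if $gL=L$ for all $g\in\overline{G}$, then this holds in particular for all $g\in G$.

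For the converse, assume $L$ is $G$-invariant. We first pass from the set condition $gL=L$ to a commutation relation with the orthogonal projection $P_L$, as in Lemma~\ref{lem:projection-invariant}.

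\begin{itemize}
\item Since each $g\in G$ is orthogonal and $gL=L$, it also satisfies $gL^\perp=L^\perp$.
\item Hence for every $x=P_Lx+(x-P_Lx)$ we get $g x = gP_Lx + g(x-P_Lx)$, where $gP_Lx\in L$ and $g(x-P_Lx)\in L^\perp$.
\item Reading off the $L$-component gives $P_L gx=gP_Lx$, that is, $gP_L=P_Lg$ for all $g\in G$.
\end{itemize}

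Next we pass to the closure. Take $g\in\overline{G}$ and choose $g_k\in G$ with $g_k\to g$ in operator norm. Since matrix multiplication is continuous, letting $k\to\infty$ in $g_kP_L=P_Lg_k$ gives $gP_L=P_Lg$. This is exactly the limiting step used in the proof of Lemma~\ref{lem:Grass}.

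Finally, we recover invariance of $L$ from the commutation relation.

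\begin{itemize}
\item For $x\in L$ we have $gx=gP_Lx=P_Lgx\in L$, so $gL\subset L$.
\item Since $g$ is injective, $\dim gL=\dim L$, and therefore $gL=L$.
\item Alternatively, apply the same inclusion to $g^{-1}\in\overline{G}$ to get the reverse containment.
\end{itemize}

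The only step requiring care is the passage to the limit. This is routine once invariance is recast as the closed condition $gP_L=P_Lg$. The set condition $gL=L$ is not obviously preserved under limits, which is why the projection formulation is used.
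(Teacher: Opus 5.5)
Your proof is correct, but it takes a slightly longer route than the paper. The paper does not use the projection. For $g\in\overline{G}$ with $g_i\to g$, $g_i\in G$, and a fixed $x\in L$, it notes that $g_ix\in L$, that $g_ix\to gx$, and that $L$ is closed, so $gx\in L$. In other words, the inclusion $gL\subset L$ is already a closed condition on $g$: it is the intersection over $x\in L$ of the closed sets $\{g: gx\in L\}$. So your closing remark that the set condition ``is not obviously preserved under limits'' is not accurate. Only the upgrade from $gL\subset L$ to the equality $gL=L$ needs an extra word. You supply it via $\dim gL=\dim L$; the paper leaves this step implicit, even though its definition of invariance requires $gL=L$. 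Your recasting as $gP_L=P_Lg$ is the natural tool in Lemma~\ref{lem:Grass}, where the subspaces $V_{j,\ell}$ vary with $\ell$ and one needs a condition that passes to the limit in both the subspace and the group element. For a fixed $L$ it is correct but more machinery than required.
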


\begin{proof}
	If $L$ is $\overline{G}$-invariant, then it is clearly $G$-invariant, since $G\subset \overline{G}$.
	
	Conversely, assume that $L$ is $G$-invariant. Since $L$ is a linear subspace of $\mathbb R^n$, it is closed. Let $g\in \overline{G}$ and $x\in L$. Then there exists a sequence $\{g_i\}\subset G$ such that $g_i\to g$. Since $L$ is $G$-invariant, we have $g_i x\in L$ for all $i$. Passing to the limit and using the continuity of the action, we obtain $g_i x\to gx$. As $L$ is closed, it follows that $gx\in L$. Hence $L$ is $\overline{G}$-invariant.
\end{proof}

	Therefore, for general subgroups  of $O(n)$, we can still obtain solutions to the corresponding $G$-invariant  dual Minkowski problem.

\begin{theorem}\label{general exist}
		 
		 Let $q\in(0,n]$, let $G\subset O(n)$ be a subgroup with no nonzero fixed points,
		 and let $\mu$ be a nonzero finite $G$-invariant Borel measure on $\Sph$.
		 Then there exists $K\in\KG$ such that
		 \[
		 \mu=\widetilde C_q(K,\cdot)
		 \]
		if and only if the corresponding condition below holds:
		 \begin{itemize}
		 	\item if $0<q<n$, then $\mu$ satisfies the $G$-invariant $q$-th subspace mass inequality;
		 	\item if $q=n$, then $\mu$ satisfies the $G$-invariant subspace concentration condition.
		 \end{itemize}
\end{theorem}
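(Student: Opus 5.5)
The plan is to reduce everything to the closed group $\overline G$ and then quote the results already proved for closed groups. Let $\overline G$ be the closure of $G$ in $O(n)$; it is a compact, hence closed, subgroup.

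\emph{Step 1: transfer the hypotheses.} First, $\mathrm{Fix}(\overline G)=\mathrm{Fix}(G)=\{0\}$. The inclusion $\mathrm{Fix}(\overline G)\subset\mathrm{Fix}(G)$ is trivial. Conversely, if $gx=x$ for all $g\in G$, then $hx=x$ for every $h\in\overline G$ by continuity. Second, $\mathcal K_G=\mathcal K_{\overline G}$. If $g_i\to g$ with $g_iK=K$, then $gK=K$, because $g_iK\to gK$ in the Hausdorff metric. Third, a $G$-invariant finite Borel measure $\mu$ is $\overline G$-invariant. For continuous $f$ and $g_i\to g$ we have $\int f\circ g\,d\mu=\lim\int f\circ g_i\,d\mu=\int f\,d\mu$ by dominated convergence, so $g_\#\mu=\mu$ by the Riesz representation. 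Finally, by Lemma~\ref{lem:G-closure-invariant-subspace}, the $G$-invariant subspaces are exactly the $\overline G$-invariant ones. Hence the $G$-invariant $q$-th subspace mass inequality coincides with the $\overline G$-invariant one, and likewise for the subspace concentration condition, including its equality clause, which only involves $L$ and an arbitrary complement.

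\emph{Step 2: sufficiency.} For $0<q<n$, apply Theorem~\ref{thm:q>0} to $\overline G$ to get $K\in\mathcal K_{\overline G}=\mathcal K_G$ with $\mu=\widetilde C_q(K,\cdot)$. For $q=n$, apply Theorem~\ref{thm:G-log} to $\overline G$ in the same way.

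\emph{Step 3: necessity.} Suppose $\mu=\widetilde C_q(K,\cdot)$ with $K\in\mathcal K_G=\mathcal K_{\overline G}$.

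For $0<q<n$, Theorem~\ref{thm:G-invariant-subspace-mass}, applied to $\overline G$, gives strict inequality for every proper $\overline G$-invariant subspace $L$, that is, for every proper $G$-invariant $L$. That theorem assumes $G$ closed and uses Lemmas~\ref{lem:orthogonal-complement-invariant} and \ref{lem:fiber-identity}, all of which hold for $\overline G$.

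For $q=n$, $\mu=V_K$ is the cone-volume measure of $K$. Since $\mathrm{Fix}(G)=\{0\}$ and the centroid is $G$-fixed, the centroid of $K$ is the origin. By the result of B\"or\"oczky and Henk \cite{BH2016}, $V_K$ therefore satisfies the classical subspace concentration condition for all subspaces. In particular it satisfies the condition for $G$-invariant ones, which is precisely the $G$-invariant subspace concentration condition.

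The only step needing care is the transfer of invariance of $\mu$ and $K$ to the closure. The necessity for $q=n$ relies on the external centered-body theorem, since Theorem~\ref{thm:G-invariant-subspace-mass} only gives the non-strict inequality there and says nothing about the equality case.
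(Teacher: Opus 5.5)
Your proposal is correct and follows the paper's proof essentially step for step. You pass to the closed group $\overline G$ and transfer the fixed-point, measure-invariance, body-invariance and invariant-subspace data. For sufficiency you then invoke the closed-group existence theorems. For necessity you use the $\overline G$ subspace-mass theorem when $0<q<n$, and the B\"or\"oczky--Henk result for centered bodies when $q=n$. The only difference is that you verify directly that $\mu$ is $\overline G$-invariant and that $\mathcal K_G=\mathcal K_{\overline G}$, whereas the paper cites these two facts from the author's earlier work.
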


\begin{proof}
We first prove the sufficiency. Assume that the corresponding condition in the statement holds.	Since $\mu$ is $G$-invariant, by Lemma 7.6 in \cite{shan2026},  $\mu$ is $\overline{G}$-invariant. Since $G$ has no nonzero fixed points and $G\subset \overline{G}$, the group $\overline{G}$ also has no nonzero fixed points. Moreover, by Lemma~\ref{lem:G-closure-invariant-subspace}, the $G$-invariant subspaces are precisely the $\overline G$-invariant subspaces. Hence $\mu$ satisfies the $G$-invariant $q$-th subspace mass inequality when $0<q<n$, and the $G$-invariant subspace concentration condition when $q=n$, if and only if it satisfies the corresponding $\overline{G}$-invariant condition. Therefore, by Theorem \ref{thm:q>0} and Theorem \ref{thm:G-log}, there exists a $\overline{G}$-invariant convex body $K$ such that
	\[
	\mu = \widetilde{C}_{q}(K,  \cdot).
	\]
	Since $G\subset \overline G$, $K$ is also $G$-invariant.  Hence the sufficiency follows.
	
	 Conversely, suppose that there exists $K\in\KG$ such that
	$
	\mu=\widetilde C_q(K,\cdot).
	$
	By Lemma~7.2 of \cite{shan2026}, $K\in\mathcal K_{\overline G}$. If $0<q<n$, then Theorem~\ref{thm:G-invariant-subspace-mass}, applied to the closed group $\overline G$, together with the equivalence of $G$-invariant and $\overline G$-invariant subspaces above, implies that $\mu$ satisfies the $G$-invariant $q$-th subspace mass inequality. If $q=n$, then $\mu=V_K$, and the necessity follows from \cite{BH2016}, using the fact that $K\in\KG$ has centroid at the origin.
\end{proof}

\section{A structural consequence for $G$-invariant measures}

The $G$-invariant subspace concentration condition, and similarly the
$G$-invariant $q$-th subspace mass inequality, appears at first sight to be unable
to detect concentration phenomena on subspaces that are not preserved by $G$. Moreover, the family of $G$-invariant subspaces may be quite small, as illustrated
by irreducible groups and by the product groups described in \eqref{product group}.

There are two extreme cases in which the relation with the classical subspace
concentration condition is clear. If $G$ is irreducible, then
Proposition~\ref{irreducible-subspace-mass} shows that every $G$-invariant
measure automatically satisfies the classical subspace concentration condition,
that is, the condition tested on all subspaces. At the opposite extreme, if
$G=\{\pm I\}$, then every subspace is $G$-invariant, and the $G$-invariant
subspace concentration condition is exactly the classical subspace
concentration condition.

Outside these special cases, it is natural to ask whether a condition imposed
only on $G$-invariant subspaces can still control concentration on arbitrary
subspaces. More precisely, for $G$-invariant measures, does the $G$-invariant subspace
concentration condition already imply the classical subspace concentration
condition? The existence
theory for the $G$-invariant logarithmic Minkowski problem  established above gives an affirmative
answer and yields the following structural consequence for $G$-invariant
measures.

\begin{theorem}\label{thm:G-SCC-equivalence}
	Let $G\subset O(n)$ be a subgroup with no nonzero fixed points. Let $\mu$ be a nonzero finite $G$-invariant Borel measure on $\Sph$. Then the following are equivalent:

\smallskip

\noindent
\textup{(i)} The measure $\mu$ satisfies the $G$-invariant subspace concentration condition; namely, for every $G$-invariant subspace $L\subset \R^n$,
\[
\frac{\mu(L\cap \Sph)}{\mu(\Sph)}
\le \frac{\dim L}{n},
\]
and whenever equality holds for some $G$-invariant subspace $L\subset \R^n$, there exists a subspace $\widetilde L$ complementary to $L$ in $\R^n$ such that $\mu$ is concentrated on $S^{n-1}\cap (L\cup \widetilde L)$.

\smallskip

\noindent
\textup{(ii)} The measure $\mu$ satisfies the classical subspace concentration condition; namely, for every subspace $\xi\subset \R^n$,
\[
\frac{\mu(\xi\cap \Sph)}{\mu(\Sph)}
\le \frac{\dim \xi}{n},
\]
and whenever equality holds for some subspace $\xi\subset \R^n$, there exists a subspace $\xi'$ complementary to $\xi$ in $\R^n$ such that $\mu$ is concentrated on $S^{n-1}\cap (\xi\cup \xi')$.
\end{theorem}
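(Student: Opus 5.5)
The implication (ii)$\Rightarrow$(i) is immediate: every $G$-invariant subspace is in particular a subspace of $\R^n$, so the inequality and the equality clause required in (i) are special cases of those in (ii). The only point to check is that the complementary subspace provided by (ii) is allowed to be arbitrary in (i), which is exactly how Definition~\ref{Def $G$-invariant subspace concentration condition} is phrased. Hence nothing beyond unwinding the definitions is needed in this direction.

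For (i)$\Rightarrow$(ii) I will pass through the existence theory rather than attempt a direct combinatorial argument on arbitrary subspaces. Assume $\mu$ satisfies the $G$-invariant subspace concentration condition.

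\begin{enumerate}
\item Apply Theorem~\ref{general exist} with $q=n$. This is legitimate for an arbitrary, not necessarily closed, subgroup $G$ with $\Fix(G)=\{0\}$, and it yields $K\in\KG$ with $\mu=V_K$, the cone-volume measure of $K$.
\item Since $G$ has no nonzero fixed points, the centroid of $K$ is fixed by all of $G$. As recorded in Section~3, this forces the centroid of $K$ to be the origin; for non-closed $G$ one can also pass to $\overline G$, using Lemma~7.2 of \cite{shan2026}.
\item Invoke the theorem of B\"or\"oczky and Henk \cite{BH2016}: the cone-volume measure of a convex body with centroid at the origin satisfies the classical subspace concentration condition, including the equality case with a complementary subspace. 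This is precisely (ii) for $\mu=V_K$, normalized by $\mu(\Sph)=V_n(K)$.
\end{enumerate}

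The main point to verify is that the result of \cite{BH2016} is stated for arbitrary centered convex bodies, not only origin-symmetric ones. It is, and it gives the inequality for every proper subspace $\xi$. It also gives the equality characterization: equality forces $K$ to split as a direct sum $K=(K\cap\xi)\oplus(K\cap\xi')$ for some complementary $\xi'$, whence $V_K$ is concentrated on $\Sph\cap(\xi\cup\xi')$.

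I would also note that the trivial dimensions $\dim\xi\in\{0,n\}$ are harmless in both directions. Beyond these checks, the proof is a direct composition of Theorem~\ref{general exist} and \cite{BH2016}. No new estimate is needed; the substantive work has already been carried out in the existence proof of Theorem~\ref{thm:G-log}.
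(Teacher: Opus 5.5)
Your proof is correct and matches the paper's argument. The direction (ii)$\Rightarrow$(i) is by restriction to $G$-invariant subspaces. For (i)$\Rightarrow$(ii) you obtain $K\in\KG$ with $\mu=V_K$ from Theorem~\ref{general exist}, note that $K$ is centered because $\Fix(G)=\{0\}$, and invoke \cite{BH2016}.

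One small inaccuracy, in your aside on the equality case. A body of the form $(K\cap\xi)+(K\cap\xi')$ with $\xi\oplus\xi'=\R^n$ has its facet normals in $\xi^\perp\cup\xi'^\perp$. The splitting that gives concentration on $\Sph\cap(\xi\cup\xi')$ is therefore $K=(K\cap\xi^\perp)+(K\cap\xi'^\perp)$, and the two coincide only when $\xi'=\xi^\perp$. This does not affect your proof, which uses only the concentration statement from \cite{BH2016}.
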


\begin{proof}
	The implication \textup{(ii)}$\Rightarrow$\textup{(i)} is immediate.
	To prove \textup{(i)}$\Rightarrow$\textup{(ii)}, assume that \textup{(i)} holds. By Theorem~\ref{general exist}, there exists a $G$-invariant convex body $K$ such that
	\[
\mu=	V_K.
	\]
	Since $G$ has no nonzero fixed points, the centroid of $K$ is at the origin.
	By \cite{BH2016}, the cone-volume measure of every centered convex body satisfies the classical subspace concentration condition.  Since $\mu=V_K$, it follows that $\mu$ satisfies the classical subspace concentration condition, that is, \textup{(ii)}.
\end{proof}

This reveals a remarkable geometric structure of measures under group symmetry:
symmetry allows geometric concentration to be encoded by algebraic subspace data,
and the Minkowski problem serves as a bridge between algebra and geometry.

\begin{remark}\label{rem:sharp-constant}

We note that the factor $\dim(\cdot)/n$ in
Theorem~\ref{thm:G-SCC-equivalence} is sharp. Indeed, if this bound is replaced
by any strictly smaller one, then the equivalence in
Theorem~\ref{thm:G-SCC-equivalence} fails in general: one can no longer infer the
mass distribution on arbitrary subspaces solely from the information on
$G$-invariant subspaces. This can be seen from the following irreducible example.

Let $G\subset O(n)$ be the rotation symmetry group of the cube $[-1,1]^n$.
This group is irreducible, and hence the $G$-invariant subspace concentration
condition is vacuous.

Now consider the $G$-invariant probability measure
$
\mu=\frac{1}{2n}\sum_{i=1}^n(\delta_{e_i}+\delta_{-e_i})
$
on $\Sph$. For the one-dimensional subspace
$
\xi=\R e_1,
$
we have
$$
\mu(\xi\cap \Sph)=\mu(\{e_1,-e_1\})=\frac{1}{n}=\frac{\dim \xi}{n}.
$$
Therefore, if the bound $\dim(\cdot)/n$ were replaced by any strictly smaller
one, then the corresponding version of \textup{(i)} would still hold trivially for
this irreducible group $G$, whereas the corresponding version of \textup{(ii)}
would fail for the above measure $\mu$. Consequently, the equivalence in
Theorem~\ref{thm:G-SCC-equivalence} would no longer hold.
\end{remark}

\medskip
\noindent\textbf{Question.}
	Does a similar equivalence remain true under a larger upper bound?
	For instance, when $1<q<n$, does the $G$-invariant $q$-th subspace mass
	inequality imply some mass estimate on arbitrary subspaces?

\section{Appendix}

We collect a few elementary facts used in the paper.

\begin{lemma}\label{lem:fiber-identity}
	Let $K\in\mathcal K_o^n$ and let $L\subset\mathbb R^n$ be a  proper subspace.
	For $u\in\SL$, let
	\[
	\rho(u)=\rho_{K|L}(u),\qquad y_u=\rho(u)u,\qquad 
	A_u=K\cap(y_u+\lp).
	\]
Then
\begin{equation}\label{eq:fiber-decomposition}
	\begin{aligned}
		\{x\in K\setminus\{0\}:x/|x|\in\alpha_K^*(\SL)\}\cup\{0\}
		&=
		\bigcup_{u\in\SL}\operatorname{conv}\{0,A_u\} \\
		&=
		\bigcup_{u\in\SL}\bigcup_{0\le r\le \rho(u)}
		\frac r{\rho(u)}A_u .
	\end{aligned}
\end{equation}
Moreover, for every $u\in\SL$ and every $0<r\le \rho(u)$,
\begin{equation}\label{eq:fiber-section}
	\{x\in K\setminus\{0\}:x/|x|\in\alpha_K^*(\SL)\}\cap(ru+\lp)
	=
	\frac r{\rho(u)}A_u .
\end{equation}
\end{lemma}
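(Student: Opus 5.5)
The plan is to prove the two displayed identities by describing, for each $x\in K\setminus\{0\}$, when the boundary point $\rho_K(x)x/|x|$ has an outer normal in $L$. The basic observation is the following. A boundary point $z\in\partial K$ has some outer unit normal $v\in L$ if and only if the supporting hyperplane $\{\langle\cdot,v\rangle=h_K(v)\}$ passes through $z$. Since $v\in L$, we have $h_K(v)=h_{K|L}(v)$ and $\langle z,v\rangle=\langle P_Lz,v\rangle$. Hence $z$ has an outer normal in $L$ if and only if $P_Lz$ lies in $\partial(K|L)$ and $v$ is an outer normal of $K|L$ at $P_Lz$. The converse also holds: if $P_Lz\in\partial(K|L)$, take any outer normal $v\in\SL$ of $K|L$ at $P_Lz$; then $\langle z,v\rangle=h_K(v)$, so $v$ is an outer normal of $K$ at $z$. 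Therefore $\rho_K(u')u'\in\nu_K^{-1}(\SL)$ exactly when $P_L(\rho_K(u')u')\in\partial(K|L)$, i.e. when $\rho_K(u')u'\in A_u$ for $u=P_L u'/|P_Lu'|$. Here we need $P_Lu'\neq0$, which holds because $K|L$ contains the origin in its relative interior.

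With this in hand, I would prove \eqref{eq:fiber-decomposition} by a two-way inclusion.

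\emph{Left set contained in the right.} Let $x\neq0$ with $x/|x|\in\alpha_K^*(\SL)$. The point $z=\rho_K(x/|x|)\,x/|x|$ lies in $A_u$ for the $u$ above, and $x=tz$ with $t\in(0,1]$. So $x\in\operatorname{conv}\{0,A_u\}$.

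\emph{Right set contained in the left.} Take $x=t a$ with $a\in A_u$ and $t\in(0,1]$. Since $a\in A_u$, we have $P_La=y_u\in\partial(K|L)$. Any point $sa$ with $s>1$ projects to $sy_u\notin K|L$, so $sa\notin K$. Hence $a$ is the radial boundary point $\rho_K(a/|a|)a/|a|$. By the observation, $a$ has an outer normal in $L$, so $x/|x|=a/|a|\in\alpha_K^*(\SL)$. The origin is included on both sides by convention.

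The equality of the two descriptions, $\operatorname{conv}\{0,A_u\}=\bigcup_{0\le r\le\rho(u)}(r/\rho(u))A_u$, is just the statement $\operatorname{conv}\{0,A\}=\bigcup_{t\in[0,1]}tA$.

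For \eqref{eq:fiber-section}, intersect the union with $ru+\lp$. The set $(s/\rho(w))A_w$ lies in $sw+\lp$. Its intersection with $ru+\lp$ is nonempty only if $sw=ru$. With $r>0$ and $u,w$ unit vectors, this forces $w=u$ and $s=r$. So the only surviving piece is $(r/\rho(u))A_u$.

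The main obstacle is the normal-cone characterization in the first paragraph, in particular the converse direction: one needs that an outer normal of the projection at $P_Lz$ lifts to an outer normal of $K$ at $z$. This follows from $h_K=h_{K|L}$ on $L$, together with ensuring that $x/|x|$ is read through the radial point so that the multivalued Gauss map is handled correctly. Everything else is bookkeeping.
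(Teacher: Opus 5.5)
Your proof is correct and follows essentially the same route as the paper: in both directions of the first identity, the key fact is that for a normal $v\in L$ the support inequality $\langle w-z,v\rangle\le 0$ for $K$ is equivalent to $\langle P_Lw-P_Lz,v\rangle\le 0$ for $K|L$, and the section identity comes from matching $L$-components, $su'=ru$. Your explicit check that each $a\in A_u$ is the radial boundary point (since $sa\notin K$ for $s>1$) makes precise a step the paper leaves implicit.
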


\begin{proof}
We first prove the first identity in \eqref{eq:fiber-decomposition}.
 Fix $u\in\SL$ and $a\in A_u$. Then
	$P_La=y_u\in\partial(K|L)$. Choose $v\in\nu_{K|L}(y_u)\subset\SL$. For every
	$z\in K$, since $P_Lz\in K|L$, we have
	\[
	\langle z-a,v\rangle
	=
	\langle P_Lz-y_u,v\rangle
	\le 0 .
	\]
	Thus $v\in\nu_K(a) $, and hence $a/|a|\in\alpha_K^*(\SL)$. Therefore, using the convexity of $A_u$,
	\[
	\bigcup_{u\in\SL}\operatorname{conv}\{0,A_u\}
	\subset
	\{x\in K\setminus\{0\}:x/|x|\in\alpha_K^*(\SL)\}\cup\{0\}.
	\]
	
	Conversely, let $x\in K\setminus\{0\}$ satisfy
	$x/|x|\in\alpha_K^*(\SL)$. Set
	$
	z=\rho_K(x)x\in\partial K $.
	By the definition of $\alpha_K^*(\SL)$, there exists $v\in\nu_K(z)\cap L$.
	Since $v\in L$, for every $w\in K$ we have
	\[
	\langle P_Lw-P_Lz,v\rangle
	=
	\langle w-z,v\rangle
	\le 0 .
	\]
	Hence $v\in\nu_{K|L}(P_Lz)$, and therefore $P_Lz\in\partial(K|L)$. Thus, for some
	$u\in\SL$,
	\[
	P_Lz=\rho_{K|L}(u)u=y_u .
	\]
	It follows that $z\in A_u$. Since $x\in\operatorname{conv}\{0,z\}$, we get
	$x\in\operatorname{conv}\{0,A_u\}$. This proves
	\[
	\{x\in K\setminus\{0\}:x/|x|\in\alpha_K^*(\SL)\}\cup\{0\}
	=
	\bigcup_{u\in\SL}\operatorname{conv}\{0,A_u\}.
	\]
	Moreover, since each $A_u$ is convex,
	\[
	\operatorname{conv}\{0,A_u\}
	=
	\bigcup_{0\le r\le \rho(u)}\frac r{\rho(u)}A_u .
	\]
	Therefore \eqref{eq:fiber-decomposition} follows.

It remains to prove \eqref{eq:fiber-section}. Since $0<r\le\rho(u)$, 
\eqref{eq:fiber-decomposition} gives
$(r/\rho(u))A_u\subset\{x\in K\setminus\{0\}:x/|x|\in\alpha_K^*(\SL)\}$.
Moreover, since $A_u\subset y_u+\lp$ and $y_u=\rho(u)u$, we have
$(r/\rho(u))A_u\subset ru+\lp$.

 Conversely, let
\[
x\in \{z\in K\setminus\{0\}:z/|z|\in\alpha_K^*(\SL)\}\cap(ru+\lp).
\]
By \eqref{eq:fiber-decomposition}, there exist $u'\in\SL$, $a'\in A_{u'}$,
and $0<t\le 1$ such that $x=ta'$. Write $t=s/\rho(u')$ with
$0<s\le \rho(u')$. Since $a'\in A_{u'}$, we have $P_La'=\rho(u')u'$, and
therefore $P_Lx=su'$. On the other hand, $x\in ru+\lp$, so $P_Lx=ru$.
Since $r>0$ and $s>0$, it follows that $s=r$ and $u'=u$. Hence
\[
x=\frac r{\rho(u)}a'\in \frac r{\rho(u)}A_u .
\]
This proves \eqref{eq:fiber-section}.
\end{proof}

\begin{lemma}\label{lem:orthogonal-complement-invariant}
	Let $G\subset O(n)$, and let $L\subset\mathbb R^n$ be a $G$-invariant subspace.
	Then $L^\perp$ is also $G$-invariant.
\end{lemma}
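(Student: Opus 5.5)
The plan is to prove that $gL^\perp\subset L^\perp$ for every $g\in G$ by testing against vectors of $L$, using that each $g$ is orthogonal and that $G$ is closed under inverses. The equality $gL^\perp=L^\perp$ will then follow from a dimension count.

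Fix $g\in G$ and $y\in L^\perp$; we must show $gy\in L^\perp$, i.e.\ $\langle gy,x\rangle=0$ for all $x\in L$. Since $g\in O(n)$, we have $g^{\top}=g^{-1}$, and therefore
\[
\langle gy,x\rangle=\langle y,g^{-1}x\rangle .
\]
Because $G$ is a group, $g^{-1}\in G$. The $G$-invariance of $L$ gives $g^{-1}L=L$, so $g^{-1}x\in L$. Since $y\perp L$, the right-hand side vanishes. This proves $gL^\perp\subset L^\perp$.

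For the reverse inclusion, note that $g$ is invertible, so $\dim gL^\perp=\dim L^\perp$. A subspace of $L^\perp$ with the same dimension must equal $L^\perp$, hence $gL^\perp=L^\perp$. Alternatively, apply the inclusion just proved to $g^{-1}$ to get $g^{-1}L^\perp\subset L^\perp$, which gives $L^\perp\subset gL^\perp$.

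There is no real obstacle here. The only points needing care are that invariance is used for $g^{-1}$ rather than $g$, which requires $G$ to be a group rather than merely a subset of $O(n)$, and that orthogonality of $g$ is what lets us move $g$ across the inner product.
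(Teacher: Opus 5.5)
Your proof is correct and is essentially the paper's argument: move $g$ across the inner product as $g^{-1}$, use $g^{-1}L\subset L$ to get $gL^\perp\subset L^\perp$, then apply the same inclusion to $g^{-1}$ for the reverse containment (your dimension count works equally well). One small remark: the group property is not actually needed here, since $gL=L$ already gives $g^{-1}L=L$ by applying $g^{-1}$ to both sides.
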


\begin{proof}
	Let $g\in G$ and $x\in L^\perp$. For every $y\in L$, since $L$ is $G$-invariant, we have
	$g^{-1}y\in L$. Hence
	\[
	\langle gx,y\rangle=\langle x,g^{-1}y\rangle=0.
	\]
	Thus $gx\in L^\perp$, and so $gL^\perp\subset L^\perp$. Applying the same argument
	to $g^{-1}$ gives $g^{-1}L^\perp\subset L^\perp$, equivalently $L^\perp\subset gL^\perp$.
	Therefore $gL^\perp=L^\perp$ for every $g\in G$, and hence $L^\perp$ is
	$G$-invariant.
\end{proof}

\begin{lemma}\label{lem:projection-invariant}
	Let $G\subset O(n)$ and let $V\subset \mathbb R^n$ be a linear subspace. Then
	$V$ is $G$-invariant if and only if
	\[
	P_V g=gP_V
	\]
	for every $g\in G$.
\end{lemma}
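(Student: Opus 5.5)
Both directions follow from rewriting $G$-invariance of $V$ as a commutation relation, using that every $g\in G$ is orthogonal.

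For the direction ($\Leftarrow$), assume $P_Vg=gP_V$ for all $g\in G$. Take $x\in V$, so $P_Vx=x$. Then
\[
gx=gP_Vx=P_V(gx)\in V,
\]
which gives $gV\subset V$. Since $g$ is invertible and $\dim gV=\dim V$, this forces $gV=V$. Hence $V$ is $G$-invariant.

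For the direction ($\Rightarrow$), assume $gV=V$ for every $g\in G$. By Lemma~\ref{lem:orthogonal-complement-invariant}, $gV^\perp=V^\perp$ as well. For any $x\in\R^n$, decompose
\[
x=P_Vx+(x-P_Vx),\qquad P_Vx\in V,\quad x-P_Vx\in V^\perp .
\]
Applying $g$ gives
\[
gx=gP_Vx+g(x-P_Vx),\qquad gP_Vx\in V,\quad g(x-P_Vx)\in V^\perp .
\]
Since the orthogonal decomposition of $gx$ into its $V$ and $V^\perp$ components is unique, $P_V(gx)=gP_Vx$. Thus $P_Vg=gP_V$.

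There is no real obstacle here. The only point that needs care is the orthogonality of $g$, which enters through Lemma~\ref{lem:orthogonal-complement-invariant} (to keep $V^\perp$ invariant) and through invertibility (to upgrade $gV\subset V$ to equality).
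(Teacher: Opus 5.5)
Your proof is correct and follows the paper's argument essentially verbatim: the forward direction uses Lemma~\ref{lem:orthogonal-complement-invariant} to split $gx$ into its $V$ and $V^\perp$ components, and the converse applies $P_Vg=gP_V$ to vectors of $V$. Your dimension count upgrading $gV\subset V$ to $gV=V$ fills in a step the paper leaves implicit; the same conclusion also follows by applying the inclusion to $g^{-1}\in G$.
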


\begin{proof}
Assume first that $V$ is $G$-invariant. By Lemma~\ref{lem:orthogonal-complement-invariant}, $V^\perp$ is also $G$-invariant. Let $x\in\mathbb R^n$. Write $x=P_Vx+(x-P_Vx)$, where $P_Vx\in V$ and
	$x-P_Vx\in V^\perp$. Then
	\[
	P_V(gx)=P_V(gP_Vx+g(x-P_Vx))=P_V(gP_Vx)=gP_Vx.
	\]
	Thus $P_Vg=gP_V$.
	
	Conversely, assume that $P_Vg=gP_V$ for every $g\in G$. If $v\in V$, then
	$P_Vv=v$, and hence
	\[
	P_V(gv)=gP_Vv=gv.
	\]
	Thus $gv\in V$. Therefore  $V$ is $G$-invariant.
\end{proof}


\end{document}